\DeclareMathAlphabet{\mathpzc}{OT1}{pzc}{m}{it} 
\newtheorem{teo}{Theorem}
\newtheorem{lema}{Lemma}[section]
\newtheorem{ThA}{Theorem}
\newcommand{\B}{\mathbb{B}}
\newcommand{\C}{\mathbb{C}}
\newcommand{\E}{\mathbb{E}}
\newcommand{\N}{\mathbb{N}}
\newcommand{\R}{\mathbb{R}}
\newcommand{\eps}{\varepsilon}
\DeclareMathAlphabet{\mathpzc}{OT1}{pzc}{m}{it}
\newcommand{\Rad}{\gamma\left(H,\B\right)}
\DeclareFontFamily{U}{mathx}{\hyphenchar\font45}
\DeclareFontShape{U}{mathx}{m}{n}{
      <5> <6> <7> <8> <9> <10>
      <10.95> <12> <14.4> <17.28> <20.74> <24.88>
      mathx10
      }{}
\DeclareSymbolFont{mathx}{U}{mathx}{m}{n}
\DeclareMathAccent{\widecheck}{0}{mathx}{"71}
\DeclareMathAccent{\wideparen}{0}{mathx}{"75}
\author[J.J. Betancor]{Jorge J. Betancor}
\author[A.J. Castro]{Alejandro J. Castro}
\author[J.C. Fari\~na]{Juan C. Fari\~na}
\author[L. Rodr\'iguez-Mesa]{L. Rodr\'iguez-Mesa}
\address{\newline
        Jorge J. Betancor, Alejandro J. Castro, Juan C. Fari\~na and Lourdes Rodr\'iguez-Mesa \newline
        Departamento de An\'alisis Matem\'atico,
        Universidad de La Laguna, \newline
        Campus de Anchieta, Avda. Astrof\'{\i}sico Francisco S\'anchez, s/n, \newline
        38271, La Laguna (Sta. Cruz de Tenerife), Spain}
\email{jbetanco@ull.es, ajcastro@ull.es, jcfarina@ull.es, lrguez@ull.es}
\thanks{This paper is partially supported by MTM2010/17974.}
\date{\today}
\begin{document}

\title[Conical square functions]
{Conical square functions associated with Bessel, Laguerre and Schr\"odinger operators in UMD Banach spaces}

\subjclass[2010]{42B25, 46E40 (primary), 42B20, 46B20, (secondary)}

\keywords{Conical square functions, vector valued harmonic analysis, UMD Banach spaces, Bessel, Laguerre,
Schr\"odinger, Hermite.}

\begin{abstract}
    In this paper we consider  conical square functions in the Bessel, Laguerre and Schr\"odinger settings where the functions
    take values in UMD Banach spaces. Following a recent paper of Hyt\"onen, van Neerven and Portal \cite{HNP},
    in order to define our conical
    square functions, we use $\gamma$-radonifying operators. We obtain new equivalent norms in the Lebesgue-Bochner spaces
    $L^p((0,\infty),\B)$ and $L^p(\R^n,\B)$, $1<p<\infty$, in terms of our square functions, provided that $\B$ is a UMD Banach space.
    Our results can be seen as Banach valued versions of known scalar results for square functions.
\end{abstract}

\maketitle

\section{Introduction}\label{sec:intro}

 In this paper we obtain equivalent norms in the Lebesgue-Bochner space $L^p(\R^n,\B)$, $1<p<\infty$,
 where $\B$ is a UMD Banach space. In order to do this we consider conical square functions defined via fractional
 derivatives of Poisson semigroups associated with Bessel, Laguerre and Schr\"odinger operators. According to
 the ideas developed by Hyt\"onen, van Neerven and Portal \cite{HNP} we use appropriate tent spaces using $\gamma$-radonifying operators
 (or, in other words, methods of stochastic analysis in a Banach valued setting).

 We denote by $P_t(z)$, the classical Poisson kernel in $\R^n$, that is,
 \begin{equation*}\label{eqA}
    P_t(z)
        = c_n\frac{t}{(|z|^2+t^2)^{(n+1)/2}}, \;\;t>0\;\;\mbox{and}\;z\in \R^n,
 \end{equation*}
 where $c_n=\Gamma((n+1)/2)/\pi^{(n+1)/2}$.

 Segovia and Wheeden \cite{SW} introduced fractional derivatives as follows.
 Suppose that $\beta >0$ and $m \in \N$ is such that $m-1 \leq \beta < m$.
 If $F:\Omega \times (0,\infty) \longrightarrow \mathbb C$ is a reasonable nice function,
 where $\Omega \subset \R^n$, the $\beta$-th derivative with respect to $t$ of $F$ is defined by
 $$\partial_t^\beta F(x,t)
    = \frac{e^{-i\pi(m-\beta)}}{\Gamma(m-\beta)}\int_0^\infty \partial_t^m F(x,t+s) s^{m-\beta-1}ds\quad t>0\;\;\mbox{and}\; x\in \Omega.$$
 In \cite{SW} this fractional derivative was used to get characterizations of classical Sobolev spaces.

 As in \cite{TZ} we define the $\beta$-conical square function $S_\beta$ by
 $$S_\beta(f)(x)
    =\left(\int_{\Gamma(x)} \left|t^\beta\partial_t^\beta P_t(f)(y)\right|^2 \frac{dydt}{t^{n+1}}\right)^{1/2}, \quad x \in \R^n,$$
 where $P_t(f)$ denotes the Poisson integral of $f$, that is,
\begin{equation}\label{eq:Poissinteg}
    P_t(f)(x)
        =\int_{\R^n}P_t(x-y)f(y)dy, \quad x\in \R^n,t>0,
\end{equation}
 and, for every $x\in \R^n$, $\Gamma(x)=\{(y,t) \in \R^n\times(0,\infty): |x-y|<t\}$.
 According to \cite[Theorems 5.3 and 5.4]{TZ} the square function $S_\beta$ defines an equivalent
  norm in $L^p(\R^n)$, $1<p<\infty$.

\begin{ThA}\label{ThA}
    Let $1<p<\infty$ and $\beta >0$. Then, there exists $C>0$ such that
    \begin{equation}\label{1.1}
        \frac{1}{C}\|f\|_{L^p(\R^n)}
            \leq \|S_\beta(f)\|_{L^p(\R^n)}
            \leq C\|f\|_{L^p(\R^n)}, \quad f \in L^p(\R^n).
    \end{equation}
\end{ThA}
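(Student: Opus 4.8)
The plan is to establish the two inequalities in \eqref{1.1} separately, viewing $S_\beta$ as the pointwise Hilbert-space norm of a vector valued singular integral. Set $\mathcal{H}=L^2\big(\Gamma(0),t^{-n-1}\,dy\,dt\big)$, where $\Gamma(0)=\{(y,t)\in\mathbb{R}^n\times(0,\infty):|y|<t\}$, and define $\mathcal{T}f(x)=\big((y,t)\mapsto t^\beta\partial_t^\beta P_t(f)(x-y)\big)$, so that $S_\beta(f)(x)=\|\mathcal{T}f(x)\|_{\mathcal{H}}$. First I would treat $p=2$. Since the Poisson semigroup corresponds on the Fourier side (in any normalization) to multiplication by $e^{-t|\xi|}$ and, from the Segovia--Wheeden formula, $\partial_t^\beta e^{-t\lambda}=e^{i\pi\beta}\lambda^\beta e^{-t\lambda}$ for $\lambda>0$, the operator $t^\beta\partial_t^\beta P_t$ is multiplication by $\phi(t\xi)$ with $\phi(\eta)=e^{i\pi\beta}|\eta|^\beta e^{-|\eta|}$. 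A Fubini argument collapses the cone, $\int_{\mathbb{R}^n}\|\mathcal{T}f(x)\|_{\mathcal{H}}^2\,dx=v_n\int_0^\infty\!\int_{\mathbb{R}^n}|t^\beta\partial_t^\beta P_t(f)(y)|^2\,\frac{dy\,dt}{t}$, with $v_n$ the volume of the unit ball, and Plancherel together with $\int_0^\infty|\phi(s)|^2\,\frac{ds}{s}=\int_0^\infty s^{2\beta}e^{-2s}\,\frac{ds}{s}<\infty$ (finite precisely because $\beta>0$) yields $\|S_\beta(f)\|_{L^2(\mathbb{R}^n)}=c\,\|f\|_{L^2(\mathbb{R}^n)}$ for an explicit $c=c(n,\beta)>0$.

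For the upper bound when $1<p<\infty$, note that $\mathcal{T}$ is a convolution operator with $\mathcal{H}$-valued kernel $\mathbf{K}(x)(y,t)=t^\beta\partial_t^\beta P_t(x-y)\,\mathbf{1}_{\{|y|<t\}}$. I would verify the Calder\'on--Zygmund size and H\"ormander smoothness conditions, $\|\mathbf{K}(x)\|_{\mathcal{H}}\le C|x|^{-n}$ and $\int_{|x|>2|x'|}\|\mathbf{K}(x-x')-\mathbf{K}(x)\|_{\mathcal{H}}\,dx\le C$. These reduce to the pointwise bounds $|t^\beta\partial_t^\beta P_t(z)|\le C\,t\,(|z|^2+t^2)^{-(n+1)/2}$ and the corresponding gradient estimates, which one obtains by inserting the classical bounds for $\partial_t^mP_t$ into the Segovia--Wheeden representation of the fractional derivative. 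Combined with the $L^2$ boundedness, vector valued Calder\'on--Zygmund theory then gives $\|S_\beta(f)\|_{L^p(\mathbb{R}^n)}=\|\mathcal{T}f\|_{L^p(\mathbb{R}^n,\mathcal{H})}\le C\|f\|_{L^p(\mathbb{R}^n)}$.

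For the reverse inequality I would argue by duality. Since $\int_0^\infty|\phi(s\xi)|^2\,\frac{ds}{s}=c^2$ does not depend on $\xi$, the Calder\'on reproducing formula $f=c^{-2}\int_0^\infty (t^\beta\partial_t^\beta P_t)^{\ast}\big(t^\beta\partial_t^\beta P_t(f)\big)\,\frac{dt}{t}$ holds in $L^2(\mathbb{R}^n)$. Hence for $f\in L^2(\mathbb{R}^n)\cap L^p(\mathbb{R}^n)$ and $g\in L^2(\mathbb{R}^n)\cap L^{p'}(\mathbb{R}^n)$, writing $\langle f,g\rangle=c^{-2}\int_0^\infty\!\int_{\mathbb{R}^n}t^\beta\partial_t^\beta P_t(f)(x)\,\overline{t^\beta\partial_t^\beta P_t(g)(x)}\,dx\,\frac{dt}{t}$ and then using the cone-averaging identity $\int_{\mathbb{R}^n}\!\int_0^\infty h(y,t)\,\frac{dt}{t}\,dy=v_n^{-1}\int_{\mathbb{R}^n}\big(\int_{\Gamma(x)}h(y,t)\,\frac{dy\,dt}{t^{n+1}}\big)\,dx$ together with the Cauchy--Schwarz inequality in $\mathcal{H}$ and H\"older's inequality, one gets $|\langle f,g\rangle|\le C\int_{\mathbb{R}^n}S_\beta(f)(x)\,S_\beta(g)(x)\,dx\le C\|S_\beta(f)\|_{L^p(\mathbb{R}^n)}\|S_\beta(g)\|_{L^{p'}(\mathbb{R}^n)}\le C\|S_\beta(f)\|_{L^p(\mathbb{R}^n)}\|g\|_{L^{p'}(\mathbb{R}^n)}$, where the last step uses the upper bound already proven for the exponent $p'$ (note $|\overline{\phi(t\xi)}|=|\phi(t\xi)|$, so the square function built from $(t^\beta\partial_t^\beta P_t)^{\ast}$ has the same $L^{p'}$ behavior as $S_\beta$). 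Taking the supremum over $g$ with $\|g\|_{L^{p'}(\mathbb{R}^n)}\le1$ gives $\|f\|_{L^p(\mathbb{R}^n)}\le C\|S_\beta(f)\|_{L^p(\mathbb{R}^n)}$, and a density argument extends both estimates to all of $L^p(\mathbb{R}^n)$.

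I expect the main obstacle to be the fractional-derivative analysis underlying the upper bound: one needs the identity $\partial_t^\beta e^{-t\lambda}=e^{i\pi\beta}\lambda^\beta e^{-t\lambda}$ and, more delicately, size and smoothness estimates for $t^\beta\partial_t^\beta P_t(z)$ that are uniform in $t$, so that the $\mathcal{H}$-valued kernel $\mathbf{K}$ genuinely satisfies the Calder\'on--Zygmund hypotheses; one must also carefully justify the several interchanges of integration (the iterated integral defining $\partial_t^\beta$, the Fourier computations and the reproducing formula) on a suitable dense subclass before passing to the limit. Alternatively, the statement may simply be quoted from \cite[Theorems 5.3 and 5.4]{TZ}.
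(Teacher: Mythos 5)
Your proposal is correct, but it does not follow the paper's treatment of this particular statement: the paper does not prove Theorem~\ref{ThA} at all, it simply quotes it from \cite[Theorems 5.3 and 5.4]{TZ}, and then proves the UMD-valued generalization (Theorem~\ref{teo 1.1}) in Section~\ref{sec:Laplacian}. Your argument is essentially the scalar shadow of that Section~\ref{sec:Laplacian} proof: your $L^2$ identity is exactly the paper's Plancherel computation of $\|\mathbb{S}g\|_{T_2^2(\R^n)}$ (same constant $v_n\Gamma(2\beta)2^{-2\beta}$, same use of $\partial_t^\beta e^{-t\lambda}=e^{i\pi\beta}\lambda^\beta e^{-t\lambda}$ as in \eqref{eqB}); your Calder\'on--Zygmund size and H\"ormander conditions for the $\mathcal{H}$-valued kernel reduce to precisely the estimates \eqref{2.3} and \eqref{2.4} that the paper derives via Fa\`a di Bruno, except that where you invoke classical Hilbert-space-valued (Benedek--Calder\'on--Panzone) singular integral theory, the paper feeds the same kernel bounds into \cite[Theorem 4.8]{HNP} to land in the $\gamma$-radonifying tent space; and your duality step is the polarization identity of Lemma~\ref{Lem2.3} followed by the argument of Lemma~\ref{Lem2.2}, with Cauchy--Schwarz on the cone replacing the trace duality estimate of \cite{HW}. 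What your route buys is a self-contained classical proof of the scalar statement that avoids both the external citation to \cite{TZ} and the $\gamma$-radonifying machinery (which for $\B=\C$, or any Hilbert space, collapses to the square-function setting you use); what the paper's route buys is that the same skeleton works verbatim for arbitrary UMD $\B$, where your pointwise Hilbert-norm formulation $S_{\beta,\B}$ provably fails (Theorem~\ref{ThB}). The only points needing care in your plan --- the justification of the fractional-derivative kernel bounds, the mean-value/H\"ormander estimate, and the interchanges of integration --- are exactly the computations \eqref{2.1}--\eqref{2.4} and \eqref{2.9} carried out in the paper, so they present no obstacle.
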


The equivalence in Theorem~\ref{ThA} for $\beta \in \mathbb N$ can  also be encountered in \cite{HNP},
\cite{MTX} and \cite{Ste1}.

Coifman, Meyer and Stein \cite{CMS} introduced a family of spaces called tent spaces.
These tent spaces are well adapted to certain questions related to harmonic analysis.
Suppose that $1 \leq p,q < \infty$. The tent space $T_p^q(\R^n)$ consists of
all those measurable functions $g$ on $\R^n \times (0,\infty)$ such that
$A_q(g)\in L^p(\R^n)$, where
$$A_q(g)(x)
    =\left(\int_{\Gamma(x)}|g(y,t)|^q\frac{dydt}{t^{n+1}}\right)^{1/q}, \quad x \in \R^n.$$
The norm $\|\cdot\|_{T_p^q(\R^n)}$ in $T_p^q(\R^n)$ is defined by
$\|g\|_{T_p^q(\R^n)}=\|A_q(g)\|_{L^p(\R^n)}$, $g\in T_p^q(\R^n)$.

More  recently Harboure, Torrea and Viviani \cite{HTV} have simplified some proofs of properties
in \cite{CMS} by using vector valued harmonic analysis techniques. Note that the result in Theorem~\ref{ThA}
can be rewritten in terms of tent spaces as follows. If $1<p<\infty$ and $\beta>0$, then, for every
$f \in L^p(\R^n)$, $t^\beta\partial_t^\beta P_t(f) \in T_p^2(\R^n)$ and
$$\frac{1}{C}\|f\|_{L^p(\R^n)}
    \leq \|t^\beta\partial_t^\beta P_t(f)\|_{T_p^2(\R^n)}
    \leq C\|f\|_{L^p(\R^n)},$$
where $C>0$ does not depend on $f$.

Assume that $\B$ is a Banach space. In order to show a version of
Theorem~\ref{ThA} for the Lebesgue-Bochner space $L^p(\R^n,\B)$,
the most natural definition of the $\beta$-conical square
function $S_{\beta,\B}$ is the following
$$S_{\beta,\B}(f)(x)
    =\left(\int_{\Gamma(x)}\|t^\beta\partial_t^\beta P_t(f)(y)\|^2_\B \frac{dydt}{t^{n+1}}\right)^{1/2},
        \quad f\in L^p(\R^n,\B), \ 1<p<\infty.$$
This type of Banach valued conical function has been considered in 
\cite{MTX}, \cite{TZ} and \cite{Xu}. Since a Banach space $\B$ has
Lusin type 2 and Lusin cotype 2 if, and only if, $\B$ is isomorphic
to a Hilbert space, from \cite[Theorems 5.3 and 5.4]{TZ} we can
deduce the following result.

\begin{ThA}\label{ThB}
     Assume that $\B$ is a Banach space, $1<p<\infty$ and $\beta >0$. The following assertions are equivalent:
    \begin{itemize}
        \item[$(i)$] $\B$ is isomorphic to a Hilbert space.
        \item[$(ii)$] There exists $C>0$ such that
        $$\frac{1}{C}\|f\|_{L^p(\R^n,\B)}
            \leq \|S_{\beta,\B}(f)\|_{L^p(\R^n)}
            \leq C\|f\|_{L^p(\R^n,\B)}, \quad f\in L^p(\R^n,\B).$$
    \end{itemize}
\end{ThA}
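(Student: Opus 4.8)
The plan is to prove the equivalence $(i)\Leftrightarrow(ii)$ by exploiting the known scalar result (Theorem~\ref{ThA}) together with the characterization of Hilbert spaces via Lusin type~$2$ and Lusin cotype~$2$, as indicated in the text, but I sketch the argument in detail.

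First I would prove $(i)\Rightarrow(ii)$. Assume $\B$ is isomorphic to a Hilbert space $H$; since both sides of the claimed inequality are invariant (up to constants) under isomorphisms of $\B$, I may assume $\B=H$ is a Hilbert space. Fix an orthonormal basis (or, in the nonseparable case, reduce to the separable subspace generated by the range of a fixed $f$, which is legitimate since $f\in L^p(\R^n,\B)$ is essentially separably valued). Write $f=\sum_j f_j e_j$ with $f_j$ scalar-valued. Because $P_t(f)$ and the Segovia--Wheeden fractional derivative $\partial_t^\beta$ act componentwise, one has $t^\beta\partial_t^\beta P_t(f)(y)=\sum_j \big(t^\beta\partial_t^\beta P_t(f_j)(y)\big)e_j$, hence by orthonormality
\[
    \|t^\beta\partial_t^\beta P_t(f)(y)\|_\B^2=\sum_j \big|t^\beta\partial_t^\beta P_t(f_j)(y)\big|^2.
\]
Integrating over $\Gamma(x)$ against $dy\,dt/t^{n+1}$ and using Tonelli gives $S_{\beta,\B}(f)(x)^2=\sum_j S_\beta(f_j)(x)^2$. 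Now raise to the power $p/2$, integrate in $x$, and apply the Fubini-type / Minkowski interchange together with the scalar Theorem~\ref{ThA} applied to each $f_j$; more efficiently, one can invoke the scalar square-function bounds in the $\ell^2$-valued (equivalently Hilbert-valued) form, which follow from Theorem~\ref{ThA} by the standard Fubini argument in the tent-space formulation, to obtain $\frac1C\|f\|_{L^p(\R^n,H)}\le \|S_{\beta,H}(f)\|_{L^p(\R^n)}\le C\|f\|_{L^p(\R^n,H)}$. This establishes $(ii)$.

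For the converse $(ii)\Rightarrow(i)$ I would argue that each inequality in $(ii)$ forces a geometric property of $\B$, and combine them. The upper estimate $\|S_{\beta,\B}(f)\|_{L^p(\R^n)}\le C\|f\|_{L^p(\R^n,\B)}$ is a vector-valued boundedness of the conical square operator; by the general principle relating such conical (Lusin area) estimates to Lusin cotype (see the Lusin cotype characterizations used in \cite{TZ}), it implies that $\B$ has Lusin cotype $2$. Dually, the lower estimate $\frac1C\|f\|_{L^p(\R^n,\B)}\le\|S_{\beta,\B}(f)\|_{L^p(\R^n)}$, via a duality/polarization argument pairing $L^p(\R^n,\B)$ with $L^{p'}(\R^n,\B^*)$ and the adjoint square function, implies that $\B$ has Lusin type $2$ (equivalently, one runs the argument directly: testing the lower bound on suitable lacunary or random sums of translates/dilates of a fixed profile reproduces the defining inequality of Lusin type $2$). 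Having both Lusin type $2$ and Lusin cotype $2$, the classical Kwapień-type theorem — quoted in the excerpt as the statement that $\B$ has Lusin type $2$ and Lusin cotype $2$ iff $\B$ is isomorphic to a Hilbert space — yields $(i)$.

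The main obstacle is the converse direction, specifically extracting Lusin type/cotype from the two one-sided inequalities in $(ii)$: one must choose test functions $f$ of the form $\sum_j \varepsilon_j \phi_j(\cdot) x_j$ (with $\phi_j$ bumps at well-separated scales/locations and $x_j\in\B$) for which the conical square function essentially decouples, so that $\|S_{\beta,\B}(f)\|_{L^p}$ reduces to a Gaussian (or Rademacher) average $\big(\E\|\sum_j \gamma_j x_j\|^2\big)^{1/2}$-type quantity while $\|f\|_{L^p(\R^n,\B)}$ reduces to $\big(\sum_j\|x_j\|^2\big)^{1/2}$; controlling the cross terms and the "off-diagonal" contributions of $t^\beta\partial_t^\beta P_t$ to the cone integral is the delicate computation. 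Once this localization is carried out, the rest is a direct appeal to the quoted results of \cite{TZ} and the Hilbert-space characterization.
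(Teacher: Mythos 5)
Your proposal follows essentially the same route as the paper: the paper gives no standalone proof of Theorem~\ref{ThB}, it simply deduces it from the characterizations of Lusin cotype $2$ and Lusin type $2$ by the one-sided conical square-function estimates in \cite[Theorems 5.3 and 5.4]{TZ}, combined with the Kwapie\'n-type fact that Lusin type $2$ together with Lusin cotype $2$ is equivalent to $\B$ being isomorphic to a Hilbert space. The ``delicate decoupling'' you anticipate in $(ii)\Rightarrow(i)$ is precisely the content of those cited theorems and need not be redone, and $(i)\Rightarrow(ii)$ also comes directly from them (a Hilbert space has Lusin type and cotype $2$), which sidesteps the $\ell^2$-valued extension of Theorem~\ref{ThA} that your componentwise argument would actually require --- for $p\neq 2$ that extension is not a mere Fubini/Minkowski interchange.
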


In order to extend the equivalence (\ref{1.1}) to $L^p(\R^n,\B)$, $1<p<\infty$, when $\B$ is a Banach space which is not
isomorphic to a Hilbert space, Hyt\"onen, van Neerven and Portal
\cite{HNP} introduced new Banach valued tent function spaces. They
considered UMD Banach spaces and $\gamma$-radonifying operators.

As it is well-known, the Hilbert transform $\mathfrak{H}$ defined by
$$\mathfrak{H}(f)(x)
    = \lim_{\eps \rightarrow 0^+} \frac{1}{\pi} \int_{|x-y|>\eps} \frac{f(y)}{x-y}dy, \quad a.e.\;\;x\in \mathbb R,$$
is a bounded operator from $L^p(\mathbb R)$ into itself, $1<p<\infty$,
and from $L^1(\mathbb R)$ into $L^{1,\infty}(\mathbb R)$.

Suppose that $\B$ is a Banach space. The Hilbert transform can be
defined in $L^p(\mathbb R)\otimes \B$, $1<p<\infty$, in the
obvious way. It is said that $\B$ is UMD, provided that
$$\|\mathfrak{H}(f)\|_{L^p(\mathbb R,\B)}
    \leq C_p\|f\|_{L^p(\mathbb R,\B)}, \quad f\in L^p(\mathbb R) \otimes \B.$$
The main properties of UMD Banach spaces were established by Bourgain \cite{Bou}, Burkholder \cite{Bu1}
and Rubio de Francia \cite{Rub}.

Assume that $\{\gamma_j\}_{j=1}^\infty$ is a sequence of
independent standard normal variables defined on some probabilistic
space $(\Omega, \mathfrak{F},\mathbb{P})$. Let $H$ be a Hilbert space and
let $\B$ be a Banach space. We say that a linear operator
$T:H\rightarrow \B$ is $\gamma$-summing (shortly $T\in
\gamma^\infty(H,\B))$ when
$$\|T\|_{\gamma^\infty(H,\B)}
    = \sup \Big(\mathbb E \Big\|\sum_{j=1}^k \gamma_jT(h_j)\Big\|_\B^2 \Big)^{1/2} < \infty,$$
where the supremum is taken over all finite orthonormal family
$h_1,\ldots,h_k$ in $H$.
Here, by $\mathbb E$ we denote the
expectation with respect to $\mathbb{P}$. The space $\gamma^\infty(H,\B)$ becomes a
Banach space when it is endowed with the norm
$\| \cdot \|_{\gamma^\infty(H,\B)}$. The space of $\gamma$-radonifying
operators (shortly, $\gamma(H,\B)$) is the closure in
$\gamma^\infty(H,\B)$ of the subspace spanned by the finite rank
operators from $H$ into $\B$. According to \cite[Proposition 3.15]{Nee},
if $H$ is separable and $\{h_j\}_{j=1}^\infty$ is an
orthonormal basis in $H$, then $T\in \gamma(H,\B)$ if, and only if,
the series $\displaystyle\sum_{j=1}^\infty \gamma_jTh_j$ converges
in $L^2(\Omega,\B)$ and, in this case,
$$\|T\|_{\gamma^\infty(H,\B)}
    = \Big(\mathbb E \Big \|\sum_{j=1}^\infty \gamma_jTh_j \Big \|_\B^2\Big)^{1/2}.$$
We will write $\|T\|_{\gamma(H,\B)} =
\|T\|_{\gamma^\infty(H,\B)}$, $T\in \gamma(H,\B)$.

Hoffman-Jorgensen \cite{HJ} and Kwapie\'n \cite{Kw} established that
$\gamma(H,\B) =\gamma^\infty(H,\B)$ provided that the Banach space
$\B$ does not contain any closed subspace isomorphic to $c_0$. We
recall that UMD Banach spaces satisfy this property.

Suppose that $(M,\mathfrak{M},\mu)$ is a measure space and
$H=L^2(M,\mathfrak{M},\mu)$. A function $f:M\rightarrow \B$ is
said to be weakly $L^2$ when, for every $L\in \B^*$, the function
$L\circ f \in H$. Then, there exists a bounded and linear
operator $T_f:H\rightarrow \B$ (shortly $T_f \in
\mathcal{L}(H,\B)$) such that, for every $L\in \B^*$,
$$\langle L,T_f(h)\rangle_{\B^*,\B}
    =\int_M\langle L,f(t)\rangle_{\B^*,\B}h(t)d\mu(t),\;\,h\in H,$$
provided that $f$ is weakly $L^2$.
We say that $f \in \gamma(M,\mu;\B)$ provided that $T_f \in
\gamma(H,\B)$. If $\B$ does not contain any closed subspace
isomorphic to $c_0$, then $\gamma(M,\mu,\B)$ is a dense subspace
of $\gamma(H,\B)$ (\cite[Remark 2.16]{Kw}).

Assume that $\B$ is a UMD Banach space and $1<p<\infty$. Hyt\"onen,
van Neerven and Portal \cite[Definition 4.1]{HNP} defined the tent
space $T_p^2(\R^n,\B)$ as the completion of
$C_c^\infty(\R^n\times (0,\infty))\otimes \B$,
where $C_c^\infty(\R^n\times (0,\infty))$ denotes the space of smooth functions with compact support in $\R^n \times (0,\infty)$, with respect
to the norm
$$\|f\|_{T_p^2(\R^n,\B)}
    = \|Jf\|_{L^p(\R^n, \gamma(H,\B))},$$
where, from now on,
$$H=L^2\Big( \R^n \times (0,\infty), \frac{dydt}{t^{n+1}} \Big),$$
the functional $J$ is defined by
$$ J: f \rightarrow [x \rightarrow [(y,t)\rightarrow \chi_{B(x,t)}(y)f(y,t)]]$$
and $B(x,t)=\{y \in \R^n: |x-y| < t\}$, $ x \in \R^n$ and $t>0$.

By taking into account  that $\gamma(H,\mathbb C) \cong H$, it is clear that
$T_p^2(\R^n, \mathbb C)= T_p^2(\R^n)$. Then, the tent
space $T_p^2(\R^n,\B)$ can be seen as a Banach valued
extension of the classical tent space $T_p^2(\R^n)$. The
main properties of the space $T_p^2(\R^n,\B)$ were
established in \cite{HNP}, where Banach values tent spaces associated
with certain bisectorial operators were defined. An alternative and equivalent definition for tent spaces $T_p^2(\mathbb{R}^n,\mathbb{B})$ can be encountered in  \cite{Kemp}.

In \cite[Theorem 8.2]{HNP} (see also \cite[Example, Section 4]{Kemp})  it was proved a vectorial extension of
(\ref{1.1}) by using the tent spaces $T_p^2(\R^n,\B)$. In the
following we extend in some sense (by considering any
positive order of derivatives) the result in \cite[Theorem 8.2]{HNP}.

\begin{teo}\label{teo 1.1}
    Let $1<p<\infty$ and $\beta >0$. Assume that $\B$
    is a UMD Banach space. Then, there exists $C>0$ such that
    $$\frac{1}{C}\|f\|_{L^p(\R^n,\B)}
        \leq \|t^\beta\partial_t^\beta P_t(f)\|_{T_p^2(\R^n,\B)}
        \leq C\|f\|_{L^p(\R^n,\B)}, \quad f \in L^p(\R^n,\B).$$
\end{teo}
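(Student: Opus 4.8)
The plan is to deduce both inequalities from a conical square function estimate for the operator $L=\sqrt{-\Delta}$ (the Fourier multiplier with symbol $|\xi|$), adapting the scheme of \cite[Theorem 8.2]{HNP} to a fractional order $\beta$. The starting point is an identification of the object under study: since $P_t=e^{-tL}$, choosing $m\in\N$ with $m-1\le\beta<m$ and combining the Segovia--Wheeden formula with the elementary identity $\frac{1}{\Gamma(m-\beta)}\int_0^\infty e^{-s\lambda}s^{m-\beta-1}\,ds=\lambda^{\beta-m}$, valid for $\lambda>0$, one checks, first for $f$ in the Schwartz class and then by density, that $t^\beta\partial_t^\beta P_t(f)=c_\beta\,\phi_\beta(tL)f$ with $\phi_\beta(\lambda)=\lambda^\beta e^{-\lambda}$ and $c_\beta$ a unimodular constant. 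Thus Theorem~\ref{teo 1.1} is equivalent to $\|f\|_{L^p(\R^n,\B)}\asymp\|\phi_\beta(tL)f\|_{T_p^2(\R^n,\B)}$. Here $\phi_\beta$ is holomorphic in every sector $\{|\arg\lambda|<\mu\}$, $0<\mu<\pi/2$, where it satisfies $|\phi_\beta(\lambda)|\le C_{\mu,N}\min\{|\lambda|^\beta,|\lambda|^{-N}\}$ for all $N>0$, and $\phi_\beta\not\equiv0$; moreover $L$ is one-to-one, sectorial of angle $0$, and has a bounded $H^\infty$ functional calculus on $L^p(\R^n,\B)$ for every UMD space $\B$ and $1<p<\infty$ (by the Mihlin multiplier theorem in UMD spaces, or from the corresponding property of $-\Delta$). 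These are the ingredients on which the square function theory of \cite{HNP} rests.

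For the upper estimate $\|\phi_\beta(tL)f\|_{T_p^2(\R^n,\B)}\le C\|f\|_{L^p(\R^n,\B)}$, write it as $\|\mathcal Q f\|_{L^p(\R^n,\gamma(H,\B))}\le C\|f\|_{L^p(\R^n,\B)}$ for the operator $\mathcal Q f=[\,x\mapsto[(y,t)\mapsto\chi_{B(x,t)}(y)\,t^\beta\partial_t^\beta P_t(f)(y)]\,]$. Putting $Q_t^\beta(x)=t^\beta\partial_t^\beta P_t(x)$, the operator $\mathcal Q$ is an integral operator whose kernel $\mathbb K(x,z)\in\mathcal L(\B,\gamma(H,\B))$ acts by $b\mapsto\varphi_{x,z}\otimes b$, where $\varphi_{x,z}(y,t)=\chi_{B(x,t)}(y)\,Q_t^\beta(y-z)$, so that $\|\mathbb K(x,z)\|_{\mathcal L(\B,\gamma(H,\B))}=\|\varphi_{x,z}\|_H$. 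From the subordination formula $P_t(x)=\frac{t}{2\sqrt{\pi}}\int_0^\infty u^{-3/2}e^{-t^{2}/(4u)}\,W_u(x)\,du$, where $W_u$ is the Gauss--Weierstrass kernel, together with the Segovia--Wheeden formula, one obtains the pointwise bounds $|Q_t^\beta(x)|\lesssim t^{a}(t+|x|)^{-n-a}$ and $|\nabla_xQ_t^\beta(x)|\lesssim t^{a}(t+|x|)^{-n-1-a}$ for a suitable $a=a(\beta)>0$ (together with the analogous bounds for $t$-derivatives). A direct computation with these estimates yields $\|\varphi_{x,z}\|_H\lesssim|x-z|^{-n}$ and the corresponding Hörmander integral smoothness condition, so $\mathbb K$ is a standard Calderón--Zygmund kernel with values in $\mathcal L(\B,\gamma(H,\B))$. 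Since, in addition, the symbols $\xi\mapsto(t|\xi|)^\beta e^{-t|\xi|}$ satisfy Mihlin's condition uniformly in $t>0$, the family $\{\phi_\beta(tL)\}_{t>0}$ is $R$-bounded on $L^p(\R^n,\B)$, and combining this $R$-boundedness (equivalently, the bounded $H^\infty$ calculus of $L$ in the UMD space $\B$) with the Calderón--Zygmund estimates on $\mathbb K$, one obtains the boundedness of $\mathcal Q$ from $L^p(\R^n,\B)$ into $L^p(\R^n,\gamma(H,\B))$, for every $1<p<\infty$, as in \cite{HNP}.

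For the lower estimate, combine duality with a Calderón reproducing formula. Set $\psi_\beta=\phi_\beta\big/\!\int_0^\infty\phi_\beta(s)^2\,\frac{ds}{s}$, so that $\int_0^\infty\phi_\beta(s)\psi_\beta(s)\,\frac{ds}{s}=1$; the bounded $H^\infty$ calculus of $L$ then gives $f=\int_0^\infty\phi_\beta(tL)\psi_\beta(tL)f\,\frac{dt}{t}$ in $L^p(\R^n,\B)$. Pairing with $g\in L^{p'}(\R^n,\B^*)$ and applying Fubini,
$$\langle f,g\rangle=\int_{\R^n}\int_0^\infty\langle\phi_\beta(tL)f(y),\,\psi_\beta(tL)g(y)\rangle\,\frac{dt\,dy}{t};$$
introducing the cones $\{|x-y|<t\}$ in the usual tent-space manner and invoking the duality $(T_p^2(\R^n,\B))^*=T_{p'}^2(\R^n,\B^*)$ of \cite{HNP}, this is bounded by $\|\phi_\beta(tL)f\|_{T_p^2(\R^n,\B)}\,\|\psi_\beta(tL)g\|_{T_{p'}^2(\R^n,\B^*)}$. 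As $\B^*$ is again UMD, the upper estimate already proved (applied to $\B^*$, $p'$ and $\psi_\beta$) gives $\|\psi_\beta(tL)g\|_{T_{p'}^2(\R^n,\B^*)}\lesssim\|g\|_{L^{p'}(\R^n,\B^*)}$, and taking the supremum over such $g$ yields $\|f\|_{L^p(\R^n,\B)}\lesssim\|\phi_\beta(tL)f\|_{T_p^2(\R^n,\B)}$.

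The principal difficulty lies in the upper estimate. On the one hand, the pointwise kernel bounds for $Q_t^\beta$ have to be established for non-integer $\beta$, which requires a careful interplay between the subordination formula and the Segovia--Wheeden fractional-derivative formula, together with some bookkeeping of the diagonal gain $a(\beta)$. On the other hand --- and this is the essential point --- the square function bound feeding the Calderón--Zygmund argument is itself a genuine estimate that relies on the UMD hypothesis (through the $H^\infty$ calculus, or the $R$-boundedness, of $L$ in $\B$) and admits no soft proof; this is precisely the obstruction that separates the Hilbert-space valued situation of Theorem~\ref{ThB} from the UMD-valued statement of Theorem~\ref{teo 1.1}.
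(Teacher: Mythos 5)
Your proposal is correct and follows essentially the same route as the paper: the Fourier/functional-calculus identification $t^\beta\partial_t^\beta P_t=e^{i\pi\beta}\,(t\sqrt{-\Delta})^\beta e^{-t\sqrt{-\Delta}}$, the size and smoothness bounds for the kernel $t^\beta\partial_t^\beta P_t(y-z)$, and the UMD tent-space machinery of \cite{HNP} for the upper estimate, followed by a polarization/reproducing-formula identity and duality (using that $\B^*$ is UMD) for the lower estimate. The paper packages the UMD input concretely as \cite[Theorem 4.8]{HNP} --- whose hypotheses include the cancellation $\int_{\R^n}t^\beta\partial_t^\beta P_t(y-z)\,dz=0$, which your sketch leaves implicit in the vanishing of $\phi_\beta$ at the origin --- together with the $\gamma$-duality pairing of \cite[Proposition 2.4]{HW} instead of your $H^\infty$-calculus/tent-space-duality formulation, and it additionally verifies that the abstract extension provided by that theorem coincides with the pointwise-defined operator; these are organizational rather than substantive differences.
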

Since $T_p^2(\R^n,\mathbb C)= T_p^2(\R^n)$ Theorem
\ref{1.1} is an extension of Theorem~\ref{ThA} to Lebesgue-Bochner
spaces $L^p(\R^n,\B)$, $1<p<\infty$, provided that $\B$ is
a UMD. Note that a UMD Banach space is not necessarily isomorphic to
a Hilbert space.

We consider the Schr\"odinger operator $L_V$ in $\R^n$ defined by
$$L_V
    =-\Delta + V,$$
where $\Delta$ represents the usual Laplacian operator, that
is, $\Delta=\sum_{j=1}^n\frac{\partial^2}{\partial x_j^2}$. We
assume that the potential $V \not \equiv 0$ is a nonnegative measurable function
for which there exist $s > n/2$ and $C>0$ such that, for
every ball $B$ in $\R^n$,
\begin{equation}\label{1.2}
    \left(\int_BV(x)^sdx\right)^{1/s}
        \leq C\int_BV(x) dx.
\end{equation}
When $V$ satisfies (\ref{1.2}) we say that $V$ verifies the $s$-reverse
H\"older inequality and we write $V \in RH_s(\R^n)$.

In a precise way our Schr\"odinger operator $\mathfrak{L}_V$ is defined as follows. We consider the sesquilinear form $Q_V$ given by
$$Q_V[f,g]
    =\int_{\R^n}\nabla f(x)\overline{\nabla g(x)}dx + \int_{\R^n}V(x) f(x)\overline{g(x)}dx, \quad\; (f,g) \in D(Q_V),$$
where $\nabla$ denotes the usual gradient. The domain $D(Q_V)$ of $Q$ is the product $\mathfrak{D}_V \times \mathfrak{D}_V$, where
$$\mathfrak{D}_V
    =\{f \in L^2(\R^n): |\nabla f| \in L^2(\R^n)\;\; \mbox{and}\;\;V^{1/2}f \in L^2(\R^n)\}.$$
The Schr\"odinger operator $\mathfrak{L}_V$ is the unique selfadjoint operator such that its domain is $\mathfrak{D}_V$
and
$$\langle \mathfrak{L}_Vf,g \rangle
    = Q_V[f,g], \quad f,g \in \mathfrak{D}_V.$$
It is clear that $C_c^\infty(\R^n)$, the space of smooth functions with compact support in $\R^n$,
is contained in $\mathfrak{D}_V$ and $\mathfrak{L}_V=L_V$ on $C_c^\infty(\R^n)$.
$\mathfrak{L}_V$ is a positive operator.

The semigroup of operators $\{W_t^{\mathfrak{L}_V}\}_{t>0}$ generated by $-\mathfrak{L}_V$ in $L^2(\R^n)$ can be written as
$$W_t^{\mathfrak{L}_V}(f)
    = \int_{[0,\infty)}e^{-t\lambda} E_{\mathfrak{L}_V}(d\lambda)f, \quad f \in L^2(\R^n)\;\;\mbox{and}\;\; t>0,$$
where $E_{\mathfrak{L}_V}$ denotes the spectral measure for $\mathfrak{L}_V$.

For every $t>0$ there exists a measurable function $W_t^{\mathfrak{L}_V}(x,y)$, $x,y \in \R^n$, such that for each $f\in L^2(\R^n)$,
\begin{equation}\label{1.3}
    W_t^{\mathfrak{L}_V}(f)(x)=\int_{\R^n}W_t^{\mathfrak{L}_V}(x,y)f(y)dy.
\end{equation}
Moreover, according to the Feynman-Kac formula (\cite[p. 280]{DZ1}), we have that
$$\left|W_t^{\mathfrak{L}_V}(x,y)\right|
    \leq C\frac{e^{-|x-y|^2/4t}}{t^{n/2}},\,\,x,y \in \R^n\;\;\mbox{and}\;\; t>0.$$
Then, the integral in (\ref{1.3}) is absolutely convergent for every $f\in L^p(\R^n)$, $1 \leq p\leq\infty$.
The family $\{W_t^{\mathfrak{L}_V}\}_{t>0}$, where $W_t^{\mathfrak{L}_V}$, $t>0$, is defined by (\ref{1.3}), is
a positive bounded semigroup in $L^p(\R^n)$, $1 \leq p < \infty$, generated by $-\mathfrak{L}_V$.
$\{W_t^{\mathfrak{L}_V}\}_{t>0}$ is not Markovian because $V \not \equiv 0$.

The semigroup of operators $\{P_t^{\mathfrak{L}_V}\}_{t>0}$ subordinated to $\{W_t^{\mathfrak{L}_V}\}_{t>0}$,
also called Poisson semigroup associated to $\mathfrak{L}_V$, is defined by
$$P_t^{\mathfrak{L}_V}(f)(x)
    =\frac{t}{2\sqrt{\pi}}\int_0^\infty \frac{e^{-t^2/4u}}{u^{3/2}}W_u^{\mathfrak{L}_V}(f)(x)du, \quad f \in L^p(\R^n), \ 1 \leq p < \infty\text{ and } t>0.$$
An  important special case of Schr\"odinger operator is the Hermite operator $\mathcal{H}$
(also called harmonic oscillator) that appears when $V(x)=|x|^2$, $x\in \R^n$.

Harmonic analysis associated with Schr\"odinger and Hermite
operators has been developed in the last years by several authors
(\cite{AB},
\cite{BCFST},
\cite{BHS2},
\cite{BT},
\cite{DGMTZ},
\cite{DG},
\cite{DZ1},
\cite{DZ3},
\cite{Sh},
\cite{ST1} and
\cite{Th},  amongst others).

Our second result establishes the equivalence in Theorem \ref{1.1}
when the classical Poisson semigroup is replaced by the Poisson semigroups
$\{P_t^{\mathfrak{L}_V}\}_{t>0}$ or $\{P_t^{\mathcal{H}}\}_{t>0}$.

\begin{teo}\label{Theorem 1.2}
    Let $1<p<\infty$ and $\beta>0$. Assume that $\B$ is a UMD Banach space.
    \begin{itemize}
    \item[$(i)$] If $V \in RH_s(\R^n)$ for some $s>n/2$, and $n \geq 3$, then there exists $C>0$ such that
    $$\frac{1}{C}\|f\|_{L^p(\R^n,\B)}
        \leq \|t^\beta \partial_t^\beta P_t^{\mathfrak{L}_V}(f)\|_{T_p^2(\R^n,\B)}
        \leq C\|f\|_{L^p(\R^n,\B)}, \;\;f\in L^p(\R^n,\B).$$
    \item[$(ii)$] For every $n \in \mathbb N$, there exists $C>0$ such that
    $$\frac{1}{C}\|f\|_{L^p(\R^n,\B)}
        \leq \|t^\beta\partial_t^\beta P_t^{\mathcal{H}}(f)\|_{T_p^2(\R^n,\B)}
        \leq C\|f\|_{L^p(\R^n,\B)}, \;\;f\in L^p(\R^n,\B).$$
    \end{itemize}
\end{teo}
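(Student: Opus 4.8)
The plan is to derive Theorem~\ref{Theorem 1.2} from the classical case already recorded in Theorem~\ref{teo 1.1}: a perturbative comparison against the Euclidean Poisson semigroup will give the upper estimate, and a Calder\'on reproducing formula combined with tent-space duality will give the lower one. I discuss part $(i)$; part $(ii)$ runs along the same lines, with the classical heat and Poisson kernels replaced by the (essentially explicit) Mehler and subordinated kernels of $\mathcal H$. As a preliminary reduction, note that on $L^2(\R^n)$ one has $P_t^{\mathfrak L_V}=e^{-t\sqrt{\mathfrak L_V}}$, so the Segovia--Wheeden $\beta$-derivative is a genuine Littlewood--Paley operator: with $\phi_\beta(z)=c_\beta\,z^\beta e^{-z}$ for a suitable $c_\beta\ne 0$, we have $t^\beta\partial_t^\beta P_t^{\mathfrak L_V}=\phi_\beta(t\sqrt{\mathfrak L_V})$ (and similarly for $\mathcal H$). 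Hence $\|t^\beta\partial_t^\beta P_t^{\mathfrak L_V}(f)\|_{T_p^2(\R^n,\B)}=\|Jg_f\|_{L^p(\R^n,\gamma(H,\B))}$ with $g_f(y,t)=\phi_\beta(t\sqrt{\mathfrak L_V})f(y)$, and the whole statement reduces to a two-sided bound for this $\gamma(H,\B)$-valued object. I also record that, $\B$ being UMD, so is $\B^*$, and $L^p(\R^n,\B)^*=L^{p'}(\R^n,\B^*)$; both facts are used in the lower bound.

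\emph{Upper estimate.} By subordination, $t^\beta\partial_t^\beta P_t^{\mathfrak L_V}$ is a superposition of heat operators $W_u^{\mathfrak L_V}$, and likewise for the classical Poisson semigroup, so I split
\[
t^\beta\partial_t^\beta P_t^{\mathfrak L_V}(f)=t^\beta\partial_t^\beta P_t(f)+\big(t^\beta\partial_t^\beta P_t^{\mathfrak L_V}-t^\beta\partial_t^\beta P_t\big)(f).
\]
The first term is $\le C\|f\|_{L^p(\R^n,\B)}$ by the upper half of Theorem~\ref{teo 1.1}. For the remainder I would use the perturbation (Duhamel) identity $W_t^{\mathfrak L_V}=W_t-\int_0^t W_{t-s}\,V\,W_s^{\mathfrak L_V}\,ds$ together with the heat kernel estimates available when $V\in RH_s(\R^n)$ and $n\ge 3$ --- Gaussian upper bounds carrying extra polynomial decay in the critical-radius function $\rho$, plus the corresponding bounds for $t$- and $y$-derivatives (see, e.g., \cite{DGMTZ}, \cite{DZ1}, \cite{Sh}). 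After subordinating, these yield a pointwise estimate for the kernel $R^\beta_t(x,y)$ of the remainder of the form
\[
|R^\beta_t(x,y)|\le C\,\frac{t}{(t+|x-y|)^{n+1}}\Big(1+\frac{t}{\rho(x)}\Big)^{-\delta}
\]
for some $\delta>0$, i.e. a genuinely improving operator. The last and more delicate point is to turn this into $\|J R^\beta f\|_{L^p(\R^n,\gamma(H,\B))}\le C\|f\|_{L^p(\R^n,\B)}$; here I would invoke the scalar tent estimate underlying Theorem~\ref{ThA} together with the $\gamma$-multiplier / $R$-boundedness machinery available in UMD spaces --- equivalently, the bounded $H^\infty$ functional calculus of $\mathfrak L_V$ on $L^p(\R^n,\B)$, which follows from its Gaussian bounds --- in the same way as in the proof of Theorem~\ref{teo 1.1}. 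For part $(ii)$ the same decomposition is used, the kernel work being easier because of the closed form of the Mehler kernel (see, e.g., \cite{Th}, \cite{ST1}); in particular $\rho(x)\sim(1+|x|)^{-1}$ is harmless for every $n$, which is why no dimensional restriction is needed there.

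\emph{Lower estimate.} The operator $\sqrt{\mathfrak L_V}$ is positive, injective and has dense range on $L^2(\R^n)$, so from $\int_0^\infty|\phi_\beta(t\lambda)|^2\,\frac{dt}{t}=c>0$ ($\lambda>0$) we obtain a Calder\'on reproducing formula, valid strongly on $L^2(\R^n)$ and extended to the Bochner setting by density: for $f$ and $g$ in suitable dense subspaces of $L^p(\R^n,\B)$ and $L^{p'}(\R^n,\B^*)$,
\[
\langle f,g\rangle=\frac{1}{c}\int_{\R^n}\!\!\int_0^\infty\big\langle \phi_\beta(t\sqrt{\mathfrak L_V})f(y),\ \psi_\beta(t\sqrt{\mathfrak L_V})g(y)\big\rangle_{\B,\B^*}\,\frac{dy\,dt}{t},
\]
where $\psi_\beta(z)=\overline{c_\beta}\,z^\beta e^{-z}$, so that $\psi_\beta(t\sqrt{\mathfrak L_V})g$ is again $t^\beta\partial_t^\beta P_t^{\mathfrak L_V}g$ up to a harmless constant. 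Estimating the right-hand side by the tent-space duality pairing $T_p^2(\R^n,\B)\times T_{p'}^2(\R^n,\B^*)$ of \cite{HNP}, and invoking the upper estimate already proved for the pair $(\B^*,p')$, gives $|\langle f,g\rangle|\le C\,\|t^\beta\partial_t^\beta P_t^{\mathfrak L_V}f\|_{T_p^2(\R^n,\B)}\,\|g\|_{L^{p'}(\R^n,\B^*)}$; taking the supremum over $g$ and using $L^p(\R^n,\B)^*=L^{p'}(\R^n,\B^*)$ yields the lower bound. For $\mathcal H$ the argument is identical, $\mathcal H$ being moreover strictly positive.

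\emph{Main obstacle.} The crux is the remainder estimate in the upper bound. First one must extract from $V\in RH_s(\R^n)$ a genuine quantitative improvement in the kernel bounds --- this is exactly where $n\ge 3$ enters, through the definition of $\rho$ and the reverse-H\"older scaling. Second, and more delicate, one must upgrade these scalar pointwise bounds to a bound into $L^p(\R^n,\gamma(H,\B))$; this cannot be done by naive kernel domination, since $\gamma(H,\B)$-norms are quadratic rather than sup-type, and must instead be routed through the vector-valued tent-space theory of \cite{HNP} and the functional calculus of the operator. In part $(ii)$ the kernel side is lighter because of the Mehler formula, but the $\gamma$-radonifying bookkeeping is the same.
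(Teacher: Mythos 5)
Your lower-bound argument is essentially the paper's: a polarization/Calder\'on identity for $t^{\beta}\partial_t^{\beta}P_t^{\mathfrak{L}_V}$ (the paper quotes it from \cite{BFRTT} rather than deriving it from the spectral theorem), the pointwise $\gamma$-duality of \cite{HW}, and the upper bound applied to $(\B^*,p')$; that part is fine. The genuine gap is in your upper bound. You propose a single, global comparison $t^{\beta}\partial_t^{\beta}P_t^{\mathfrak{L}_V}-t^{\beta}\partial_t^{\beta}P_t$ and claim the remainder kernel satisfies $|R_t^{\beta}(x,y)|\le C\,t(t+|x-y|)^{-n-1}(1+t/\rho(x))^{-\delta}$, i.e.\ is ``genuinely improving''. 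This estimate is false in the regime $t\gtrsim\rho(x)$: there the Schr\"odinger kernel does decay (one has $|t^{\beta}\partial_t^{\beta}P_t^{\mathfrak{L}_V}(y,z)|\le C t^{\beta}\rho(y)(t+|y-z|)^{-n-\beta-1}$, cf.\ (3.4) in the paper), but the classical kernel does not, so the difference is of the same size as $t^{\beta}\partial_t^{\beta}P_t(x-y)\sim t^{\beta}(t+|x-y|)^{-n-\beta}$, with no gain in $t/\rho(x)$ whatsoever (test $x=y$, $\rho(x)\sim1$, $t\to\infty$). The Kato--Trotter/Duhamel gain $(u/\rho^2)^{\delta}$ is only available at sub-critical times $u\lesssim\rho^2$. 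Once the improving factor is lost at large scales, the remainder can no longer be handled by kernel domination (the resulting kernel $(\rho(x)+|x-z|)^{-n}$ is not summable dyadically), and your fallback appeal to an ``$H^\infty$ calculus of $\mathfrak{L}_V$ on $L^p(\R^n,\B)$'' is precisely the kind of unproved vector-valued square-function statement the theorem is about, so it cannot be invoked as a black box.

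What the paper does instead, and what your argument is missing, is a local/global splitting with respect to the critical radius before any comparison is made. Writing $K^{\mathfrak{L}_V}(f)(x;y,t)=\chi_{\Gamma(x)}(y,t)\,t^{\beta}\partial_t^{\beta}P_t^{\mathfrak{L}_V}(f)(y)$, one separates $f\chi_{B(x,\rho(x))}$ from $f\chi_{\R^n\setminus B(x,\rho(x))}$. The global piece is estimated directly (no comparison) using the $\rho(y)$-improved bound above together with \cite{DGMTZ} and \cite{Sh}; Minkowski's inequality in $\gamma(H,\B)$ reduces it to the Hardy--Littlewood maximal function, so only $\B$ Banach is needed. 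On the local piece one compares with the classical Poisson semigroup via Kato--Trotter \cite{DZ2}, where the gain $(u/\rho^2)^{\delta}$ is legitimately available, and again dominates by $\mathcal M$ and the heat maximal operator. The remaining local classical term $\mathbb{K}_{loc}$ is the only place UMD enters: it is controlled by Lemma~2.1 (i.e.\ \cite[Theorem 4.8]{HNP}) after covering $\R^n$ by the critical balls $Q_k=B(x_k,\rho(x_k))$ of \cite{DZ1} with bounded overlap. Your proposal needs to be restructured along these lines; as written, the key pointwise bound on which the whole upper estimate rests does not hold.
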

Since $T_p^2(\R^n,\mathbb C) = T_p^2(\R^n)$, $1 <p<\infty$,
the scalar results can be deduced as special cases of Theorem \ref{Theorem 1.2}.

We now define the space $T_p^2((0,\infty),\B)$, $1<p<\infty$,
where $\B$ is again a UMD Banach space. Let
$1<p<\infty$. The tent space $T_p^2((0,\infty),\B)$ is the
completion of $C_c^\infty(0,\infty) \otimes \B$ with respect to the norm
$$\|f\|_{T^2_p((0,\infty),\B)}
    = \|J_+f\|_{L^p((0,\infty),\gamma(H_+,\B))},$$
where, from now on,
$$H_+=L^2\Big( (0,\infty)^2, \frac{dydt}{t^2} \Big),$$
the functional $J_+$ is defined by
$$J_+:f \rightarrow [x\rightarrow[(y,t)\rightarrow \chi_{B_+(x,t)}(y) f(y,t)]]$$
and $B_+(x,t)=\{y \in (0,\infty):|x-y|<t\}$, $x,t\in(0,\infty)$.
Here, by $C_c^\infty(0,\infty)$ we denote the space of smooth functions with compact support on $(0,\infty)$.

We will use the tent space $T_p^2((0,\infty),\B)$ to get
equivalent norms in the Lebesgue-Bochner space
$L^p((0,\infty),\B)$, for every $1<p<\infty$ and every UMD
Banach space $\B$. Our new norms (see Theorems \ref{teo 1.3} and
\ref{teo 1.4} below) involve Poisson semigroups associated with
Bessel and Laguerre operators.

Harmonic analysis in the Bessel setting began with the deep paper
of Muckhenhoupt and Stein \cite{MS}. Recently, operators related
to the harmonic analysis (Riesz transform, Littlewood-Paley
functions, maximal operators, multipliers,\dots) in the Bessel
context have been investigated (see, for instance, \cite{BCR2},
\cite{BFMR}, \cite{BFMT}, \cite{BHNV} and \cite{Vill}). We
consider the Bessel operator on $(0,\infty)$,
$$B_\lambda
    = -x^\lambda Dx^{2\lambda}Dx^{-\lambda}
    = -\frac{d^2}{dx^2} + \frac{\lambda(\lambda-1)}{x^2},$$
where $\lambda>0$. The Hankel transform $h_\lambda$ is defined by
$$h_\lambda(f)(x)
    = \int_0^\infty\sqrt{xy}J_{\lambda-1/2}(xy) f(y) dy, \quad\;f \in L^1(0,\infty),$$
where  $J_\alpha$ denotes the Bessel function of the  first kind
and order $\alpha$. $h_\lambda$ plays with respect to the Bessel
operator the same role as the Fourier transform with respect to
the classical Laplacian operator. $h_\lambda$ can be extended from
$L^1(0,\infty) \cap L^2(0,\infty)$ to $L^2(0,\infty)$ as an
isometry in $L^2(0,\infty)$ (\cite[Ch. VIII]{Ti2}). By using
well-known properties of the Bessel function  $J_\alpha$ we can
deduce that, for every $f\in C_c^\infty(0,\infty)$,
$$h_\lambda(B_\lambda f)(x)
    =x^2h_\lambda(f)(x), \quad x \in (0,\infty),$$
(\cite[Lemma 5.4-1(5)]{Ze2}). We define the operator $\mathfrak{B}_\lambda$ by
$$\mathfrak{B}_\lambda(f)
    = h_\lambda(x^2h_\lambda(f)), \quad f \in D(\mathfrak{B}_\lambda),$$
where the domain $D(\mathfrak{B}_\lambda)$ of $\mathfrak{B}_\lambda$ is given by
$$D(\mathfrak{B}_\lambda)
    =\{f\in L^2(0,\infty): y^2h_\lambda(f) \in L^2(0,\infty)\}.$$
Since $h_\lambda^{-1} = h_\lambda$ in $L^2(0,\infty)$,
$C_c^\infty(0,\infty) \subset D(\mathfrak{B}_\lambda)$ and
$\mathfrak{B}_\lambda f= B_\lambda f$, $f \in
C_c^\infty(0,\infty)$. The operator $-\mathfrak{B}_\lambda$
generates a positive and bounded semigroup of operators
$\{W_t^{\mathfrak{B}_\lambda}\}_{t>0}$ in $L^p(0,\infty)$, for
every $1\leq p < \infty$. Moreover, the Poisson semigroup
$\{P_t^{\mathfrak{B}_\lambda}\}_{t>0}$ associated with the Bessel
operator $\mathfrak{B}_\lambda$ can be written as
$$P_t^{\mathfrak{B}_\lambda}(f)(x)
    = \int_0^\infty P_t^{\mathfrak{B}_\lambda}(x,y) f(y) dy, \quad t\in (0,\infty),$$
for every $f \in L^p(0,\infty)$, $1 \leq p<\infty$. Here the Poisson kernel
$P_t^{\mathfrak{B}_\lambda}(x,y)$, $t,x,y \in (0,\infty)$, is given by (\cite[ (16.4)]{MS}, \cite{Wei})
$$P_t^{\mathfrak{B}_\lambda}(x,y)
    = \frac{2\lambda(xy)^\lambda t}{\pi}\int_0^\pi \frac{(\sin\theta)^{2\lambda-1}}{(t^2+ (x-y)^2+2xy(1-\cos\theta))^{\lambda+1}}d\theta, \quad t,x,y\in (0,\infty).$$

\begin{teo}\label{teo 1.3}
    Let $1<p<\infty$ and $\beta,\lambda >0$. Assume that $\B$ is a UMD Banach space. Then, there exists $C>0$ such that
    $$\frac{1}{C}\|f\|_{L^p((0,\infty),\B)}
        \leq \|t^\beta \partial_t^\beta P_t^{\mathfrak{B}_\lambda}(f)\|_{T_p^2((0,\infty),\B)}
        \leq C\|f\|_{L^p((0,\infty),\B)}, \quad f \in L^p((0,\infty),\B).$$
\end{teo}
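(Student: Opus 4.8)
The plan is to deduce Theorem~\ref{teo 1.3} from the classical one-dimensional case contained in Theorem~\ref{teo 1.1}. Let $\{\mathbb{P}_t\}_{t>0}$ denote the classical Poisson semigroup on $(0,\infty)$, say the one with kernel $\mathbb{P}_t(x,y)=P_t(x-y)+P_t(x+y)$ arising from even reflection. Since for $x>0$ the half-cone $\{(y,t):\, y>0,\, |x-y|<t\}$ sits inside the full cone $\Gamma(x)$, the ideal property of $\gamma$-radonifying operators (applied to composition with the inclusion $H_+\hookrightarrow L^2(\R\times(0,\infty),\tfrac{dydt}{t^2})$), together with $B(x,t)\cap(0,\infty)=B_+(x,t)$ and $\mathbb{P}_t f=P_t\tilde f$ on $(0,\infty)$, gives $\|t^\beta\partial_t^\beta\mathbb{P}_t(f)\|_{T_p^2((0,\infty),\B)}\le C\|t^\beta\partial_t^\beta P_t(\tilde f)\|_{T_p^2(\R,\B)}$, where $\tilde f$ is the even extension of $f$; hence Theorem~\ref{teo 1.1} with $n=1$ gives $\|t^\beta\partial_t^\beta\mathbb{P}_t(f)\|_{T_p^2((0,\infty),\B)}\le C\|f\|_{L^p((0,\infty),\B)}$. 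Writing $P_t^{\mathfrak{B}_\lambda}(x,y)=\mathbb{P}_t(x,y)+Z_t^\lambda(x,y)$, it then suffices, for the upper estimate in Theorem~\ref{teo 1.3}, to prove that the difference operator $\mathcal{D}_\beta f(x,t)=t^\beta\partial_t^\beta\int_0^\infty Z_t^\lambda(x,y)f(y)\,dy$ maps $L^p((0,\infty),\B)$ boundedly into $T_p^2((0,\infty),\B)$. For the lower estimate no separate argument is needed: $\mathfrak{B}_\lambda$ is positive and self-adjoint on $L^2(0,\infty)$ and diagonalized by the Hankel transform, with $h_\lambda(P_t^{\mathfrak{B}_\lambda}f)(\xi)=e^{-t\xi}h_\lambda(f)(\xi)$, so the identity $\int_0^\infty t^{2\beta}\xi^{2\beta}e^{-2t\xi}\,\tfrac{dt}{t}=c_\beta$ (finite for every $\beta>0$) and Hankel--Plancherel give the Calder\'on reproducing formula $\langle f,g\rangle=c_\beta^{-1}\int_0^\infty\!\!\int_0^\infty\bigl\langle(t^\beta\partial_t^\beta P_t^{\mathfrak{B}_\lambda}f)(x),(t^\beta\partial_t^\beta P_t^{\mathfrak{B}_\lambda}g)(x)\bigr\rangle\,\tfrac{dx\,dt}{t}$ for $g\in L^{p'}((0,\infty),\B^\ast)$; combined with the tent-space duality $\bigl(T_p^2((0,\infty),\B)\bigr)^\ast=T_{p'}^2((0,\infty),\B^\ast)$ (which holds because $\B$, hence $\B^\ast$, is UMD) and the upper estimate already obtained for the pair $(p',\B^\ast)$, this yields $\|f\|_{L^p((0,\infty),\B)}\le C\|t^\beta\partial_t^\beta P_t^{\mathfrak{B}_\lambda}f\|_{T_p^2((0,\infty),\B)}$.

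To bound $\mathcal{D}_\beta$ I would split $(0,\infty)\times(0,\infty)$ into the local region $\{|x-y|<x/2\}$ and its complement. On the local region one uses the comparison of the Bessel and Euclidean Poisson kernels: differentiating the explicit $\theta$-integral representation of $P_t^{\mathfrak{B}_\lambda}(x,y)$, using $1-\cos\theta\sim\theta^2$, and inserting the Segovia--Wheeden formula for $\partial_t^\beta$, one finds that $Z_t^\lambda$ and its $t$-derivatives are more regular than $\mathbb{P}_t$ --- they carry extra decay measured by $|x-y|/\max\{x,y\}$ and $t/\max\{x,y\}$, plus a contribution of $|P_t^{\mathfrak{B}_\lambda}1(x)-1|$ accounting for the non-conservativity of the Bessel semigroup --- and this extra regularity reduces the local part of $\mathcal{D}_\beta$, in the scalar case, to Hardy-type and Fefferman--Stein maximal estimates that are bounded on $L^p((0,\infty))$. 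On the global region $|x-y|\ge x/2$ one uses $t^2+(x-y)^2+2xy(1-\cos\theta)\ge c(t^2+(x-y)^2+xy\,\theta^2)$ to evaluate the $\theta$-integral and obtain $|t^\beta\partial_t^\beta P_t^{\mathfrak{B}_\lambda}(x,y)|+|t^\beta\partial_t^\beta\mathbb{P}_t(x,y)|\le C\,\tfrac{t}{t^2+(x-y)^2}$ with $|x-y|\sim\max\{x,y\}$; the conical average of this piece is majorized by a positive integral operator of Hilbert/fractional-integration type that is bounded on $L^p((0,\infty))$. At every step the $\gamma(H_+,\B)$-valued tent norm is handled as in \cite{HNP}: the $\gamma$-norm of the relevant part of $J_+\mathcal{D}_\beta f(x)$ is dominated by an appropriate scalar $H_+$-quantity by means of Kahane's contraction principle and the ideal property of $\gamma$-radonifying operators, and one uses that $\gamma(H_+,\B)$ is again UMD when $\B$ is, so that the scalar bounds transfer.

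The hard part will be carrying out the local-region analysis in the genuine $\gamma$-radonifying norm, which is an $L^p(L^\gamma)$-type norm and not an $L^p(L^2)$-norm unless $\B$ is isomorphic to a Hilbert space, so one cannot simply pass to absolute values of the kernel inside $H_+$. The way around this is to view $t^\beta\partial_t^\beta Z_t^\lambda$ as an explicit superposition, over the parameter $\theta\in(0,\pi)$ in the Poisson representation (after differentiating under the integral sign $m=\lceil\beta\rceil$ times in $t$ and applying the Segovia--Wheeden formula), to apply the half-line classical conical square function estimate to each member of the resulting family with constants uniform in $\theta$, and then to sum the $\theta$-integral inside the $\gamma$-norm using the gain $(\sin\theta)^{2\lambda-1}$ together with $1-\cos\theta\sim\theta^2$. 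A secondary difficulty is to keep all constants uniform across the regimes $0<\lambda<1$ and $\lambda\ge 1$, where the behaviour of the kernels near the diagonal and their $\theta$-integrability change character, and to justify the repeated differentiation of the $\theta$-integral representation.
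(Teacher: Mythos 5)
Your overall architecture does match the paper's: transfer of the classical half-line bound (Theorem \ref{teo 1.1} with $n=1$, restricted to half-cones via the ideal property of $\gamma$-radonifying operators), reduction to a difference operator, and a Hankel--Plancherel polarization identity plus the $\gamma$-pairing and the upper bound for $(p',\B^*)$ for the lower estimate (the paper's Section \ref{subsec:Bess2}); your choice of the even rather than the odd extension is immaterial, since the cross term $P_t(y+z)$ has conical norm $\|t^\beta\partial_t^\beta P_t(y+z)\chi_{\Gamma_+(x)}\|_{H_+}\le C/(x+z)$ (Lemma \ref{Lem5.1}), a Hardy-type kernel. The genuine gap is that the decisive estimates for the difference operator $\mathcal D_\beta$ are never proved, and what you predict for them is not what is true. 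On the local region $x/2<z<2x$ the quantity $\|t^\beta\partial_t^\beta[P_t^{\mathfrak{B}_\lambda}(y,z)-P_t(y-z)]\chi_{\Gamma_+(x)}(y,t)\|_{H_+}$ is \emph{not} governed by ``extra decay'' reducible to Fefferman--Stein maximal estimates: the correct bound (Lemma \ref{Lem5.3}) is $\frac{C}{z}\big(1+\log_+\frac{z}{|x-z|}\big)$, with a logarithmic singularity on the diagonal, which is then absorbed by Jensen's inequality with respect to the probability measure $\frac{1}{C_0}\big(1+\log_+\frac{z}{|x-z|}\big)\chi_{(x/2,2x)}(z)\frac{dz}{z}$ together with Hardy's inequalities. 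Obtaining it requires the specific manipulations with the $\theta$-representation of the Bessel--Poisson kernel (replacing $2(1-\cos\theta)$ by $\theta^2$, and the identity $\int_0^\infty\theta^{2\lambda-1}(t^2+(y-z)^2+yz\theta^2)^{-\lambda-1}d\theta=\frac{\pi}{2\lambda(yz)^\lambda t}P_t(y-z)$ minus a tail), none of which appears in your sketch.

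Moreover, the step you single out as the hard part --- writing $t^\beta\partial_t^\beta Z_t^\lambda$ as a superposition over $\theta$, applying the classical conical square-function estimate to each $\theta$-slice with uniform constants, and summing inside the $\gamma$-norm via Kahane's contraction principle --- is both unnecessary and unlikely to work: for fixed $\theta$ the slice kernel $(yz)^\lambda t(\sin\theta)^{2\lambda-1}\big(t^2+(y-z)^2+2yz(1-\cos\theta)\big)^{-\lambda-1}$ is not a classical Poisson kernel, so Theorem \ref{teo 1.1} does not apply to it, and the cancellation against $P_t(y-z)$ only materializes after the $\theta$-integration. The correct and much simpler device, used throughout Section \ref{sec:Bessel}, is the triangle inequality for Bochner integrals in $\gamma(H_+,\B)$ together with $\gamma(H_+,\C)=H_+$ and $\|h\otimes b\|_{\gamma(H_+,\B)}=\|h\|_{H_+}\|b\|_\B$: this dominates $\|J_+\mathcal D_\beta f(x)\|_{\gamma(H_+,\B)}$ by $\int_0^\infty K(x,z)\|f(z)\|_\B\,dz$ with the scalar kernel $K$ above, so no UMD property, contraction principle, or $\theta$-wise square functions are needed for the difference term. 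With that reduction your plan essentially becomes the paper's proof, but the kernel estimates (Lemmas \ref{Lem5.1}--\ref{Lem5.3}) remain to be supplied; you also omit the identification/density step needed to pass from $C_c^\infty(0,\infty)\otimes\B$ to general $f\in L^p((0,\infty),\B)$, for which the paper uses the size estimate of Lemma \ref{Lem5.5} and the argument of Lemma \ref{Lem2.1}.
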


Muckenhoupt (\cite{Mu2} and \cite{Mu3}) began the study of
harmonic analysis associated to Laguerre operators. Later, Dinger
\cite{Di} established $L^p$-boundedness properties for the maximal
operator defined by the $n$-dimensional heat semigroup in the
Laguerre context. In the last years a lot of authors have
investigated harmonic analysis operators related to Laguerre
operators (see, for instance,
\cite{BFRST1},
\cite{Dzi2},
\cite{FSS},
\cite{GIT},
\cite{HTV},
\cite{NoSt2},
\cite{Nee}, and
\cite{Sz}).

We consider the Laguerre operator on $(0,\infty)$
$$L_\alpha
    =-\frac{d^2}{dx^2}+ \frac{\alpha^2-1/4}{x^2}+x^2,$$
where $\alpha >-1/2$. For every $k\in \mathbb N$, we have that
$$L_\alpha\varphi_k^\alpha
    = 2 (2k+\alpha+1)\varphi_k^\alpha,$$
where
$$\varphi_k^\alpha(x)
    =\left( \frac{2\Gamma(k+1)}{\Gamma(k+\alpha+1)}\right)^{1/2} e^{-x^2/2} x^{\alpha+1/2}\ell_k^\alpha(x^2), \quad x \in (0, \infty),$$
and $\ell_k^\alpha$ represents the $k$-th Laguerre polynomial of order $\alpha$ (\cite[p. 100--102]{Sze}).
The sequence $\{\varphi_k^\alpha\}_{k\in\mathbb N}$ is an orthonormal basis in $L^2(0,\infty)$.

We define the operator $\mathcal{L}_\alpha$ as follows
$$\mathcal{L}_\alpha f
    = 2 \sum_{k=0}^\infty (2k+\alpha+1) c_k^\alpha(f)\varphi_k^\alpha, \quad f\in D(\mathcal{L}_\alpha),$$
where, for every $k\in \mathbb N$,
$$c_k^\alpha(f)
    =\int_0^\infty \varphi_k^\alpha(x) f(x) dx, \quad f \in L^2(0,\infty),$$
and the domain $D(\mathcal{L}_\alpha)$ of $\mathcal{L}_\alpha$ is given by
$$D(\mathcal{L}_\alpha)
    =\{f \in L^2(0,\infty): \sum_{k=0}^\infty (2k+\alpha+1)^2|c_k^\alpha(f)|^2 < \infty\}.$$
The operator $-\mathcal{L}_\alpha$ generates a positive and bounded semigroup $\{W_t^{\mathcal{L}_\alpha}\}_{t>0}$
in $L^2(0, \infty)$, given by
$$W_t^{\mathcal{L}_\alpha}(f)
    = \sum_{k=0}^\infty e^{-2(2k+\alpha+1)}c_k^\alpha(f)\varphi_k^\alpha, \quad f \in L^2(0,\infty) \;\;\mbox{and}\;\; t>0.$$
Mehler's formula for Laguerre polynomials (\cite[p. 8]{Th}) allows us to write, for each $f \in L^2(0,\infty)$,
\begin{equation}\label{1.4}
    W_t^{\mathcal{L}_\alpha}(f)(x)
        = \int_0^\infty W_t^{\mathcal{L}_\alpha}(x,y)f(y)dy, \quad t,x \in (0,\infty),
\end{equation}
where
\begin{equation}\label{eq:heatLag}
    W_t^{\mathcal{L}_\alpha}(x,y)
        = \left(\frac{2e^{-2t}}{1-e^{-4t}}\right)^{1/2}\left(\frac{2xye^{-2t}}{1-e^{-4t}}\right)^{1/2}
        I_\alpha\left(\frac{2xye^{-2t}}{1-e^{-4t}}\right)\exp\left(-\frac{1}{2}(x^2+y^2)\frac{1+e^{-4t}}{1-e^{-4t}}\right),
\end{equation}
and $I_\alpha$ represents the modified Bessel function of the first kind and order $\alpha$.

Moreover, if $W_t^{\mathcal{L}_\alpha}$ is defined by (\ref{1.4}), for every $t>0$,
then $\{W_t^{\mathcal{L}_\alpha}\}_{t>0}$ is a positive and bounded semigroup of operators in $L^p(0,\infty)$, $ 1\leq p < \infty$.

As usual the Poisson semigroup $\{P_t^{\mathcal{L}_\alpha}\}_{t>0}$ associated with $\mathcal{L}_\alpha$
is defined as the one subordinated to $\{W_t^{\mathcal{L}_\alpha}\}_{t>0}$, that is, for every $t>0$,
\begin{equation}\label{eq:PoissLag}
    P_t^{\mathcal{L}_\alpha}(f)(x)
        = \frac{t}{2\sqrt{\pi}}\int_0^\infty \frac{e^{-t^2/4u}}{u^{3/2}}W_u^{\mathcal{L}_\alpha}(f)(x) du, \quad f \in L^p(0,\infty)\;\;\mbox{and}\;\; 1\leq p < \infty.
\end{equation}

\begin{teo} \label{teo 1.4}
    Let $1<p<\infty$ and $\alpha, \beta >0$.
    Assume that $\B$ is a UMD Banach space. Then, there exists $C>0$ such that
    $$\frac{1}{C} \|f\|_{L^p((0,\infty),\B)}
        \leq \|t^\beta\partial_t^\beta P_t^{\mathcal{L}_\alpha}(f)\|_{T_p^2((0,\infty),\B)}
        \leq C\|f\|_{L^p((0,\infty),\B)}, \quad f \in L^p((0,\infty), \B).$$
\end{teo}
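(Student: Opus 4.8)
The plan is to follow the scheme behind Theorems~\ref{teo 1.1}--\ref{teo 1.3}, transferring the estimate to the Bessel setting of Theorem~\ref{teo 1.3} with $\lambda=\alpha+\tfrac12$ (so that $\lambda(\lambda-1)=\alpha^2-\tfrac14$) and treating the resulting error and global parts directly. First, using \eqref{eq:PoissLag} and the definition of $\partial_t^\beta$ (an absolutely convergent iterated integral once $m-1\le\beta<m$, $m\in\N$), I would write, for $f$ in a dense subspace of $L^p((0,\infty),\B)$,
$$t^\beta\partial_t^\beta P_t^{\mathcal{L}_\alpha}(f)(x)=\int_0^\infty K_{\beta,t}^{\mathcal{L}_\alpha}(x,y)\,f(y)\,dy,\qquad K_{\beta,t}^{\mathcal{L}_\alpha}(x,y)=\int_0^\infty W_u^{\mathcal{L}_\alpha}(x,y)\,\Phi_{\beta,t}(u)\,du,$$
where $\Phi_{\beta,t}$ is the explicit kernel produced by applying $t^\beta\partial_t^\beta$ to $\frac{t}{2\sqrt\pi}\,u^{-3/2}e^{-t^2/4u}$. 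Using Mehler's formula \eqref{eq:heatLag} together with the asymptotics $I_\alpha(z)\sim e^{z}/\sqrt{2\pi z}$ as $z\to\infty$ and $I_\alpha(z)\sim(z/2)^\alpha/\Gamma(\alpha+1)$ as $z\to0$, I would record size and $\partial_t$-, $\partial_y$-estimates for $K_{\beta,t}^{\mathcal{L}_\alpha}(x,y)$, separately for $t$ small and $t$ large; in the latter regime the spectral gap $2(\alpha+1)>0$ of $\mathcal{L}_\alpha$ provides exponential decay, which ensures convergence of the tent-space integral at infinity.

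Next I would split $(0,\infty)^2=N_{\mathrm{loc}}\cup N_{\mathrm{glob}}$, with $N_{\mathrm{loc}}=\{(x,y):|x-y|<\tfrac12\min\{x,y\}\}$, inducing $K_{\beta,t}^{\mathcal{L}_\alpha}=K_{\beta,t}^{\mathrm{loc}}+K_{\beta,t}^{\mathrm{glob}}$. On $N_{\mathrm{loc}}$ the potential $x^2$ perturbs $B_{\alpha+1/2}$ only mildly, and comparing Mehler's formula with the Bessel Poisson kernel should give $K_{\beta,t}^{\mathrm{loc}}(x,y)=\chi_{N_{\mathrm{loc}}}(x,y)\,K_{\beta,t}^{\mathfrak{B}_{\alpha+1/2}}(x,y)+E_{\beta,t}(x,y)$, where the error $E_{\beta,t}$, the truncation tail $\chi_{N_{\mathrm{glob}}}K_{\beta,t}^{\mathfrak{B}_{\alpha+1/2}}$, and the global part $K_{\beta,t}^{\mathrm{glob}}$ all satisfy
$$\Big(\int_0^\infty\!\!\int_{B_+(x,t)}\big|R_{\beta,t}(y,z)\big|^2\,\frac{dy\,dt}{t^2}\Big)^{1/2}\le\mathcal{M}(x,z),\qquad x,z\in(0,\infty),$$
for a kernel $\mathcal{M}$ bounded on $L^p(0,\infty)$ (of weighted Hardy--Young type). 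This is the analogue of the reduction to the Euclidean case used for $\mathcal{H}$ in Theorem~\ref{Theorem 1.2}$(ii)$.

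The upper bound would then follow: the leading contribution is $t^\beta\partial_t^\beta P_t^{\mathfrak{B}_{\alpha+1/2}}(f)$, controlled by Theorem~\ref{teo 1.3}, while each remainder operator $f\mapsto\big[(y,t)\mapsto\chi_{B_+(x,t)}(y)\int_0^\infty R_{\beta,t}(y,z)f(z)\,dz\big]$ is handled by regarding $z\mapsto\big((y,t)\mapsto\chi_{B_+(x,t)}(y)R_{\beta,t}(y,z)\big)$ as an $H_+$-valued kernel and invoking $\|h\otimes b\|_{\gamma(H_+,\B)}=\|h\|_{H_+}\|b\|_\B$ together with Minkowski's integral inequality to obtain $\|J_+(\cdot)(x)\|_{\gamma(H_+,\B)}\le\int_0^\infty\mathcal{M}(x,z)\|f(z)\|_\B\,dz$, followed by the $L^p(0,\infty)$-boundedness of $\mathcal{M}$. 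For the lower bound I would argue by duality: $\B$ being UMD is reflexive, $L^p((0,\infty),\B)'=L^{p'}((0,\infty),\B')$, and $T_p^2((0,\infty),\B)'=T_{p'}^2((0,\infty),\B')$ by the vector-valued tent-space duality of \cite{HNP}; combining these with a Calder\'on-type reproducing formula $f=c_\beta\int_0^\infty \big(t^\beta\partial_t^\beta P_t^{\mathcal{L}_\alpha}\big)^2 f\,\frac{dt}{t}$ (valid on a dense subspace via the spectral theorem for $\mathcal{L}_\alpha$, whose kernel is trivial) and the upper bound already proved for the UMD space $\B'$ yields $\|f\|_{L^p((0,\infty),\B)}\le C\,\|t^\beta\partial_t^\beta P_t^{\mathcal{L}_\alpha}(f)\|_{T_p^2((0,\infty),\B)}$.

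The main obstacle is the kernel analysis of the second step: because the $x^2$ term breaks the dilation invariance enjoyed by $B_{\alpha+1/2}$, the comparison between $K^{\mathcal{L}_\alpha}$ and $K^{\mathfrak{B}_{\alpha+1/2}}$ and the decay of $E_{\beta,t}$ and $K_{\beta,t}^{\mathrm{glob}}$ must be established with estimates uniform in $x,y,z$ and over all four combinations of $N_{\mathrm{loc}}$ versus $N_{\mathrm{glob}}$ with the ranges $t$ small and $t$ large, and the nonlocal operator $\partial_t^\beta$ forces these bounds to survive integration against $\Phi_{\beta,t}(u)$ without destroying the Gaussian and exponential gains. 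A further delicate point is that pointwise domination in $\|\cdot\|_\B$ does not control $\|\cdot\|_{\gamma(H_+,\B)}$, so the remainder operators must be handled through genuinely $H_+$-valued kernels and the Banach ideal property of $\gamma$-radonifying operators rather than by a scalar comparison.
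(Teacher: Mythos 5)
Your overall architecture (compare with a semigroup whose conical square function is already controlled, dominate the discrepancy by a positive kernel in the $H_+$-cone norm using the ideal property of $\gamma(H_+,\B)$ and Minkowski's inequality, then get the lower bound by duality from a polarization/reproducing identity and the upper bound for $\B^*$) is exactly the paper's, and your lower-bound paragraph is essentially Lemma~\ref{Lem6.3} plus the argument of Section~\ref{subsec:Bess2}. The genuine gap is in the comparison you choose for the upper bound. You compare $P_t^{\mathcal{L}_\alpha}$ with the Bessel semigroup $P_t^{\mathfrak{B}_{\alpha+1/2}}$ on the region $N_{\mathrm{loc}}=\{|x-y|<\tfrac12\min\{x,y\}\}$, asserting that there the potential $x^2$ perturbs $B_{\alpha+1/2}$ "only mildly". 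That is false at spatial infinity: for $y\sim z\sim x$ large the $x^2$ term is a controllable perturbation only for subordination times $u\lesssim \rho(x)^2$ with $\rho(x)\sim\min(1,1/x)$, while for $u\gtrsim\rho(x)^2$ the Bessel heat kernel is of size $u^{-1/2}e^{-c(y-z)^2/u}$ with no decay in $x$, whereas $W_u^{\mathcal{L}_\alpha}(y,z)$ is exponentially small; their difference is comparable to the Bessel kernel itself. Feeding this into the subordination formula and \eqref{3.3}, the best cone-norm bound one gets for your error piece $E_{\beta,t}$ is of order $\bigl(|x-z|+\rho(x)\bigr)^{-1}$ on $x/2<z<2x$. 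Such a kernel is not of "weighted Hardy--Young type": its $z$-mass over $(x/2,2x)$ grows like $\log(e+x)$, and testing the positive operator with kernel $\chi_{(x/2,2x)}(z)\,(|x-z|+\rho(x))^{-1}$ on $g=\chi_{(R/2,2R)}$ gives $\|Kg\|_{L^p}\gtrsim R^{1/p}\log R$, so no $L^p$-bounded dominating kernel of the kind you invoke exists. Hence the central step of your upper bound breaks; repairing it by pointwise domination would force you to shrink the local region to $|x-z|\lesssim\rho(x)$ and then handle the annulus $\rho(x)\lesssim|x-z|\lesssim x$ by a Schr\"odinger-type critical-radius analysis as in Section~\ref{sec:Schrodinger} (covering by balls $B(x_k,\rho(x_k))$, splitting into $S$ and $\mathbb S$, etc.), i.e.\ essentially redoing Theorem~\ref{Theorem 1.2}.

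The paper avoids this by comparing with the Hermite semigroup instead: it writes $t^\beta\partial_t^\beta P_t^{\mathcal{L}_\alpha}(f)-t^\beta\partial_t^\beta P_t^{\mathcal H}(f_0)$, where $f_0$ is the zero extension, and uses that both heat kernels carry the same factor $\exp\bigl(-\tfrac12(y^2+z^2)\tfrac{1+e^{-4u}}{1-e^{-4u}}\bigr)$, so the discrepancy lives entirely in the Bessel-function factor. With $\xi=2yze^{-2u}/(1-e^{-4u})$, the regime $\xi\le1$ gives a bound $Ct^\beta(t+y+z)^{-\beta-1}$ (cone norm $\le C/x$ locally), and in the regime $\xi\ge1$ the asymptotics \eqref{6.3} yield a relative gain, exploited as $\xi^{-1/3}$, leading to a local error kernel $\tfrac Cx\bigl(1+(x/|x-z|)^{(3\varepsilon+2)/6}\bigr)$ with $\varepsilon$ small; this has bounded mass against $dz/z$ on $(x/2,2x)$, so the Jensen/probability-measure argument applies (Lemma~\ref{Lem6.2}), while the off-diagonal regions are handled by Hardy inequalities (Lemma~\ref{Lem6.1}). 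If you want to keep a Bessel comparison, it can only work near the origin; at infinity the Laguerre operator must be compared with the harmonic oscillator, which is precisely why Theorem~\ref{Theorem 1.2}$(ii)$ is proved first and then cited here.
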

In the following sections we prove Theorems \ref{teo 1.1}, \ref{Theorem 1.2}, \ref{teo 1.3} and \ref{teo 1.4}.

Throughout this paper by $C$ and $c$ we always denote positive constants that can change in each occurrence.

\section{Proof of Theorem \ref{teo 1.1}}\label{sec:Laplacian}

We split the proof of Theorem~\ref{teo 1.1} in the Lemmas~\ref{Lem2.1} and \ref{Lem2.2} below.
We are going to use the arguments developed in \cite[Theorem 8.2]{HNP}.

\begin{lema}\label{Lem2.1}
    Let $\B$ be a UMD Banach space, $1<p<\infty$ and $\beta>0$.
    Then, there exists $C>0$ such that
    \begin{equation}\label{2.0.1}
        \|t^\beta \partial _t^\beta P_t(f)\|_{T_p^2(\mathbb{R}^n,\B)}
            \leq C\|f\|_{L^p(\mathbb{R}^n,\B)},\quad f \in L^p(\mathbb{R}^n,\B).
    \end{equation}
\end{lema}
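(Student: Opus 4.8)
The plan is to follow the scheme of \cite[Theorem 8.2]{HNP}; the only genuinely new point is the treatment of the fractional order $\beta$, which gets absorbed into the kernel estimates and into the choice of an auxiliary holomorphic function. Write $A=\sqrt{-\Delta}$, so that $P_t=e^{-tA}$. On nice functions the Segovia--Wheeden derivative can be computed by subordination: if $m\in\N$ with $m-1\le\beta<m$, then, using $\partial_t^m e^{-(t+s)A}=(-A)^m e^{-(t+s)A}$ and $\int_0^\infty e^{-sA}s^{m-\beta-1}\,ds=\Gamma(m-\beta)A^{\beta-m}$,
$$\partial_t^\beta P_t(f)=\frac{e^{-i\pi(m-\beta)}}{\Gamma(m-\beta)}\int_0^\infty(-A)^m e^{-(t+s)A}f\,s^{m-\beta-1}\,ds=c_\beta\,A^\beta e^{-tA}f .$$
Hence $t^\beta\partial_t^\beta P_t(f)=c_\beta\,\psi(tA)f$ with $\psi(z)=z^\beta e^{-z}$, which is holomorphic and bounded on each sector $\{|\arg z|<\sigma\}$, $\sigma<\pi/2$, and satisfies $|\psi(z)|\le C\min\{|z|^\beta,|z|^{-1}\}$ there. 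Thus \eqref{2.0.1} is exactly the conical square function bound $\|\psi(tA)f\|_{T_p^2(\R^n,\B)}\le C\|f\|_{L^p(\R^n,\B)}$, which is of the type handled in \cite{HNP}.

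First I would record the two ingredients that feed the HNP machinery. (i) Since $\B$ is UMD, the Laplacian $-\Delta$, and therefore $A=\sqrt{-\Delta}$, has a bounded $H^\infty$-calculus on $L^p(\R^n,\B)$, $1<p<\infty$; this follows from the vector-valued Mikhlin--H\"ormander Fourier multiplier theorem (Bourgain, McConnell), or directly from \cite{HNP}. (ii) The kernel $K_t^\beta(z):=t^\beta\partial_t^\beta P_t(z)$ obeys Calder\'on--Zygmund-type bounds
$$|K_t^\beta(z)|+|t\,\partial_t K_t^\beta(z)|+|t\,\nabla_z K_t^\beta(z)|\le C\,\frac{t^\beta}{(t+|z|)^{n+\beta}},\qquad z\in\R^n,\ t>0 .$$
Indeed, $P_{\lambda t}(\lambda z)=\lambda^{-n}P_t(z)$ forces $t^m\partial_t^m P_t(z)$ to be homogeneous of degree $-n$ and smooth off the origin, so $|\partial_t^m P_t(z)|\le C(t+|z|)^{-n-m}$ (with one more power of $(t+|z|)^{-1}$ for each $z$- or $t$-derivative); inserting this into the Segovia--Wheeden integral and substituting $s=(t+|z|)u$ yields the displayed estimate, the $u$-integral converging because $0<m-\beta\le 1$.

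With (i) and (ii) in hand, the argument of \cite[Theorem 8.2]{HNP} runs without change. For $f\in C_c^\infty(\R^n)\otimes\B$ and fixed $x$, split $(y,t)\mapsto\chi_{B(x,t)}(y)\,(K_t^\beta*f)(y)$ into a local part and a global part: the local part is controlled in $\gamma(H,\B)$ by the Kahane contraction principle together with the $L^p(\R^n,\B)$-boundedness of the vertical Littlewood--Paley $g$-function attached to $\psi(tA)$ (a consequence of the bounded $H^\infty$-calculus and the UMD property), while the global part is estimated from the off-diagonal decay $t^\beta(t+|z|)^{-n-\beta}$ through a $\gamma$-valued Minkowski inequality and a summation over the dyadic annuli $|x-y|\sim 2^k t$. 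Taking $L^p(\R^n,dx)$-norms gives \eqref{2.0.1}, and a density argument extends it to all of $L^p(\R^n,\B)$. The main obstacle, and the only place where effort beyond \cite{HNP} is required, is steps (i)--(ii): rigorously justifying the subordination identity for $\B$-valued $f$ (so that the scalar kernel computations are legitimate inside the $\gamma$-tent norm) and deriving the fractional kernel bounds; once these are established, the remainder is the $\beta\in\N$ reasoning of \cite[Theorem 8.2]{HNP}.
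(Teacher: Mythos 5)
Your reduction of the fractional derivative to the functional-calculus form $t^\beta\partial_t^\beta P_t=e^{i\pi\beta}(tA)^\beta e^{-tA}$, $A=\sqrt{-\Delta}$, and your derivation of the size bound $|t^\beta\partial_t^\beta P_t(z)|\leq Ct^\beta(t+|z|)^{-n-\beta}$ by homogeneity plus the Segovia--Wheeden integral are correct and agree with what the paper does (the paper obtains the multiplier $e^{i\pi\beta}(t|\xi|)^\beta e^{-t|\xi|}$ and the bounds \eqref{2.3}, \eqref{2.4} by explicit computation). The problem is the step where the boundedness into $T_p^2(\R^n,\B)$ is actually concluded. The paper does this by verifying the hypotheses of the kernel criterion \cite[Theorem 4.8]{HNP}: (a) the scalar operator maps $L^2(\R^n)$ into $T_2^2(\R^n)$ (checked by Plancherel), (b) the regularity estimate \eqref{2.4} for $\nabla_z k$, and (c) the cancellation $\int_{\R^n}k(y,z,t)\,dz=0$. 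You verify neither (a) nor (c) --- and (c) is not decorative: it is exactly the hypothesis whose failure forces the completely different local/global analysis for the Schr\"odinger, Bessel and Laguerre semigroups later in the paper.

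In place of this you offer a sketch (``local part by the Kahane contraction principle together with the vertical $g$-function bound, global part by off-diagonal decay'') which, as stated, is not a proof. The contraction principle only allows you to insert uniformly bounded scalar multipliers inside one fixed $\gamma(H,\B)$-norm; it cannot transform the conical quantity $\|\chi_{B(x,t)}(y)\,\psi(tA)f(y)\|_{\gamma(L^2(\R^n\times(0,\infty),dy\,dt/t^{n+1}),\B)}$ into the vertical quantity $\|\psi(tA)f(x)\|_{\gamma(L^2((0,\infty),dt/t),\B)}$, since these are $\gamma$-norms over different Hilbert spaces and the function on the cone involves values of $\psi(tA)f$ at points $y\neq x$. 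The passage between conical and vertical $\gamma$-square functions is precisely the nontrivial content that the tent-space machinery of \cite{HNP} provides (and in general Banach spaces the two are not comparable), so deriving the conical estimate from the vertical one by a contraction argument is circular in spirit and invalid in detail. To repair the argument you should either check all three hypotheses of \cite[Theorem 4.8]{HNP} for the kernel $k(y,z,t)=t^\beta\partial_t^\beta P_t(y-z)$ (as the paper does), or genuinely run HNP's $H^\infty$-calculus route and verify its actual hypotheses for $\psi(z)=z^\beta e^{-z}$, rather than the ad hoc splitting. Finally, note that \eqref{2.0.1} is asserted for every $f\in L^p(\R^n,\B)$, and the phrase ``a density argument'' hides a real step: the paper spends the second half of its proof showing that the abstract tent-space extension furnished by \cite[Theorem 4.8]{HNP} coincides almost everywhere with the concrete integral operator $f\mapsto t^\beta\partial_t^\beta P_t(f)$ on all of $L^p(\R^n,\B)$.
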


\begin{proof}
    Let $f\in L^p(\mathbb{R}^n,\B)$. It is not hard to see that, for every $k\in \mathbb{N}$,
    $$\partial _t^k P_t(f)(x)
        = \int_{\mathbb{R}^n} \partial _t^k P_t(x-y) f(y)dy,\quad x \in \mathbb{R}^n \mbox{ and } t>0.$$
    Assume that $m\in \mathbb{N}$ is such that $m-1\leq \beta <m$. We have that
    \begin{align*}
        \partial _t^\beta P_t(f)(x)
            &= \frac{e^{-i\pi (m-\beta )}}{\Gamma (m-\beta )}\int_0^\infty s^{m-\beta -1}\int_{\mathbb{R}^n}\partial _t^m P_{t+s}(x-y) f(y)dyds\\
            &= \frac{e^{-i\pi (m-\beta )}}{\Gamma (m-\beta )}\int_{\mathbb{R}^n}f(y)\int_0^\infty s^{m-\beta -1}\partial _t^m P_{t+s}(x-y) dsdy\\
            &= \int_{\mathbb{R}^n} \partial _t^\beta P_t(x-y) f(y) dy,\quad x \in \mathbb{R}^n \mbox{ and } t>0.
    \end{align*}
    The interchange in the order of integration is justified because
    \begin{equation}\label{2.0}
        \int_0^\infty \int_{\mathbb{R}^n}\|f(y)\|_\B s^{m-\beta -1} |\partial _t^m P_{t+s}(x-y)|dyds<\infty,\quad x \in \mathbb{R}^n \mbox{ and } t>0.
    \end{equation}
    Indeed, according to Fa di Bruno's formula (\cite[(4.6)]{GLLNU}) we can write,
    for each $n\geq 2$, $t>0$ and $z\in \mathbb{R}^n$,
    \begin{align}\label{2.1}
        &\partial _t^m\left[\frac{t}{(t^2+|z|^2)^{(n+1)/2}}\right]
            = \frac{1}{1-n}\partial _t^{m+1}\left[\frac{1}{(t^2+|z|^2)^{(n-1)/2}}\right]\nonumber\\
        & \quad = \frac{1}{1-n}\sum_{\ell=0}^{\lfloor (m+1)/2 \rfloor}
            \Big(\frac{-1}{2}\Big)^{m+1-\ell} (n-1)(n+1) \cdot \ldots \cdot (n-1+2(m-\ell))E_{m,\ell}
            \frac{t^{m+1-2\ell}}{(t^2+|z|^2)^{(n+1+2(m-\ell))/2}},
    \end{align}
    and
    \begin{align}\label{2.2}
        \partial _t^m\left[\frac{t}{t^2+|z|^2}\right]
            &= \frac{1}{2}\partial _t^{m+1}\ln (t^2+|z|^2)
            = \frac{1}{2} \sum_{\ell=0}^{\lfloor (m+1)/2 \rfloor} (-1)^{m-\ell}(m-\ell)! E_{m,\ell}
                 \frac{t^{m+1-2\ell}}{(t^2+|z|^2)^{m+1-\ell }},
    \end{align}
    where $E_{m,\ell }=2^{m+1-2\ell }(m+1)!/(\ell !(m+1-2\ell )!)$.
    From (\ref{2.1}) and \eqref{2.2} we deduce that
    $$\left|\partial _t^m\left[\frac{t}{(t^2+|z|^2)^{(n+1)/2}}\right]\right|
        \leq C\frac{1}{(t+|z|)^{m+n}},\quad z \in \mathbb{R}^n \mbox{ and } t>0.$$
    Then,
    \begin{align*}
        \int_0^\infty s^{m-\beta -1}\left|\partial _t^m\frac{t+s}{((t+s)^2+|x-y|^2)^{(n+1)/2}}\right|ds
            &\leq C\int_0^\infty \frac{s^{m-\beta -1}}{(t+s+|x-y|)^{m+n}}ds\\
            &\leq C\frac{1}{(t+|x-y|)^{n+\beta }},\quad x,y\in \mathbb{R}^n, \ t>0,
    \end{align*}
    and H\"older inequality allows us to obtain (\ref{2.0}).

    Also, we have that
    \begin{equation}\label{2.3}
        |t^\beta \partial _t^\beta P_t(x-y)|
            \leq C\frac{t^\beta }{(t+|x-y|)^{n+\beta }},\quad x,y\in \mathbb{R}^n, \ t>0.
    \end{equation}

    To simplify we write
    \begin{equation*}\label{2.3.1}
        (Sf)(t,y)
            =\int_{\mathbb {R}^n}k(y,z,t)f(z)dz,\quad f\in L^p(\mathbb{R}^n,\B),
    \end{equation*}
    where $k(y,z,t)=t^\beta \partial_t^\beta P_t(y-z)$, $y,z\in \mathbb{R}^n$ and $t>0$.
    Our objective is to see that $S$ is a
    bounded operator from $L^p(\mathbb{R}^n,\B)$ into
    $T_p^2(\mathbb{R}^n,\B)$. In order to do this we use \cite[Theorem 4.8]{HNP}.

    We consider the operator
    $$(\mathbb{S}g)(t,y)
        =\int_{\mathbb{R}^n}k(y,z,t)g(z)dz,\quad g\in L^2(\mathbb{R}^n).$$

    As usual, for every $g\in L^2(\mathbb{R}^n)$, we denote by $\widehat{g}$ the Fourier transform of
    $g$ and by $\widecheck{g}$ the inverse Fourier transform of $g$. Let $g\in L^2(\mathbb{R}^n)$. It is well-known that
    \begin{equation*}\label{2.0.2}
        \partial _t^kP_t(g)(y)
            =((-1)^k|z|^k e^{-t|z|}\widehat{g})^{\widecheck{\;}},\quad k\in \mathbb{N}.
    \end{equation*}
    Since $(-1)^k|z|^k e^{-t|z|}\widehat{g}\in L^1(\mathbb{R}^n)$, $t>0$ and $k\in \mathbb{N}$, we can write
    $$\partial _t^kP_t(g)(y)
        =\frac{(-1)^k}{(2\pi )^{n/2}}\int_{\mathbb{R}^n}e^{iyz}|z|^ke^{-t|z|}\widehat{g}(z)dz.$$
    Hence,
    \begin{align}\label{eqB}
        \partial_t^\beta P_t(g)(y)
            &=\frac{(-1)^me^{-i\pi (m-\beta )}}{(2\pi )^{n/2}\Gamma (m-\beta )}\int_0^\infty s^{m-\beta -1}\int_{\mathbb{R}^n}e^{iyz} |z|^m e^{-(t+s)|z|}\widehat{g}(z)dzds \nonumber\\
            &=\frac{e^{i\pi \beta }}{(2\pi )^{n/2}\Gamma (m-\beta )}\int_{\mathbb{R}^n}e^{iyz}|z|^m e^{-t|z|} \widehat{g}(z) \int_0^\infty e^{-s|z|}s^{m-\beta -1}dsdz \nonumber\\
            &=\frac{e^{i\pi \beta}}{(2\pi )^{n/2}}\int_{\mathbb{R}^n}e^{iyz} |z|^\beta e^{-t|z|} \widehat{g}(z)dz
            =e^{i\pi \beta}(|z|^\beta e^{-t|z|} \widehat{g}(z))^{\widecheck{\;}}.
    \end{align}
    The interchange of the order of integration is justified by the absolute convergence of the integral.
    Thus, Plancherel equality leads to
    \begin{align*}\label{eqC}
        \|\mathbb{S} g\|^2_{T_2^2(\mathbb{R}^n)}
            &=\int_{\mathbb{R}^n}\int_{\Gamma (x)}|(\mathbb{S}g)(y,t)|^2\frac{dydt}{t^{n+1}}dx
             =\int_{\mathbb{R}^n}\int_{\Gamma (x)}|t^\beta \partial _t^\beta P_t(g)(y)|^2  \frac{dydt}{t^{n+1}}dx \nonumber \\
            &=\int_{\mathbb{R}^n}\int_{\Gamma (x)}|(e^{-t|z|}(t|z|)^\beta \widehat{g}(z))^{\widecheck{\;}}(y)|^2\frac{dydt}{t^{n+1}}dx \nonumber \\
            &=\int_0^\infty \int_{\mathbb{R}^n}\int_{B(y,t)}|(e^{-t|z|}(t|z|)^\beta \widehat{g}(z))^{\widecheck{\;}}(y)|^2\frac{dxdydt}{t^{n+1}} \nonumber \\
            &=v_n\int_0^\infty \int_{\mathbb{R}^n}|(e^{-t|z|}(t|z|)^\beta \widehat{g}(z))^{\widecheck{\;}}(y)|^2\frac{dydt}{t}
            =v_n\int_0^\infty \int_{\mathbb{R}^n}|e^{-t|z|}(t|z|)^\beta \widehat{g}(z)|^2\frac{dzdt}{t} \nonumber \\
            &=v_n \frac{\Gamma (2\beta )}{2^{2\beta }}\int_{\mathbb{R}^n}|\widehat{g}(z)|^2dz
            =v_n\frac{\Gamma (2\beta )}{2^{2\beta }}\|g\|_{L^2(\mathbb{R}^n)}^2,
    \end{align*}
    where $v_n$ is the volume of the unit ball in $\mathbb{R}^n$.
    Hence, $\mathbb{S}$ is a bounded operator from $L^2(\mathbb{R}^n)$ into $T_2^2(\mathbb{R}^n)$.

    We now prove that
    \begin{equation}\label{2.4}
        |\nabla_zk(y,z,t)|
            \leq C\frac{t^\beta}{(t+|y-z|)^{n+\beta +1}},\quad y,z\in \mathbb{R}^n \mbox{ and } t>0.
    \end{equation}
    Let $j=1,\dots,n$. According to (\ref{2.1}) and (\ref{2.2}) we obtain
    $$\partial_j\partial _t^m\left[\frac{t}{(t^2+|z|^2)^{(n+1)/2}}\right]
        =\sum_{\ell=0}^{\lfloor (m+1)/2 \rfloor} a_l\frac{t^{m+1-2l}z_j}{(t^2+ |z|^2)^{(n+3+2(m-l))/2}},\quad z \in \mathbb{R}^n \mbox{ and } t>0,$$
    for certain $a_l\in \mathbb{R}$. Then,
    $$\left|t^\beta \partial_j\partial _t^m\left[\frac{t}{(t^2+|z|^2)^{(n+1)/2}}\right]\right|
        \leq C \frac{t^\beta }{(t+|z|)^{n+m+1}},\quad z \in \mathbb{R}^n \mbox{ and } t>0.$$
    By proceeding as in the proof of (\ref{2.3}) we get
    $$|\partial _jk(y,z,t)|
        \leq C\frac{t^\beta }{(t+|y-z|)^{n+1+\beta }},\quad y,z \in \mathbb{R}^n \mbox{ and } t>0.$$
    Thus (\ref{2.4}) is established.

    From (\ref{2.4}) and by using the mean value theorem we can deduce that
    \begin{equation*}\label{2.5}
        |k(y,z,t)-k(y,z',t)|\leq C\frac{t^\beta |z-z'|}{(t+|y-z|)^{n+\beta+1}},
    \end{equation*}
    provided that $t>0$, $y,z,z'\in \mathbb{R}^n$ and $|y-z|+t>2|z-z'|$.

    Moreover, we can write
    \begin{equation*}\label{2.6}
        \int_{\mathbb{R}^n}k(y,z,t)dz=t^\beta \partial _t^\beta \int_{\mathbb{R}^n}P_t(y-z)dz=0,\quad t>0 \mbox{ and }y\in \mathbb{R}^n.
    \end{equation*}

    According to  \cite[Theorem 4.8]{HNP}, the operator
    $S_\B=\mathbb{S}\otimes I_\B$ can be extended from $C_c^\infty (\mathbb{R}^n)\otimes \B$
    to $L^p(\mathbb{R}^n,\B)$ as a bounded
    operator $\widetilde{S_\B}$ from $L^p(\mathbb{R}^n,\B)$ into
    $T_p^2(\mathbb{R}^n,\B)$.
    Also, as a special case, the operator $\mathbb{S}$ can be extended from
    $C_c^\infty(\R^n)$ to $L^p(\R^n)$ as a bounded operator $\widetilde{S_\C}$
    from $L^p(\R^n)$ into $T_p^2(\R^n)$.
    We are going to show that $S=\widetilde{S_\B}$ on $L^p(\mathbb{R}^n,\B)$.

    In order to get a better understanding of the proof we see firstly that
    $\widetilde{S_\C}=\mathfrak{S}$ on $L^p(\R^n)$, where
    $$(\mathfrak{S}g)(t,y)
        = \int_{\R^n} k(y,z,t) g(z)dz, \quad g \in L^p(\R^n).$$
    Let $f \in L^p(\mathbb{R}^n)$. We choose a sequence $(f_k)_{k\in \mathbb{N}}$
    in $C_c^\infty (\mathbb{R}^n)$ such that
    $$ f_k\longrightarrow f, \quad \text{as } k \to \infty, \text{ in } L^p(\mathbb{R}^n).$$
    Then,
    $$ \mathbb{S}f_k \longrightarrow \widetilde{S_\C}f, \quad \text{as } k\rightarrow \infty, \text{ in } T_p^2(\mathbb{R}^n).$$
    Hence, $(J(\mathbb{S}f_k))_{k\in \mathbb{N}}$ converges to a function $g\in L^p(\mathbb{R}^n,H)$.
    There exists an increasing sequence $(k_\ell)_{\ell \in \mathbb{N}}\subset \mathbb{N}$ and a subset
    $\Omega \in \mathbb{R}^n$ such that
    $|\mathbb{R}^n\setminus \Omega|=0$ and, for every $x\in \Omega$,
    \begin{equation}\label{2.6.1}
        [J(\mathbb{S}f_{k_\ell})](x)\longrightarrow g(x),\mbox{ as } \ell \rightarrow \infty , \mbox{ in } H.
    \end{equation}
    On the other hand, according to (\ref{2.3}) we have that, for every $\ell \in \N$,
    \begin{align}\label{2.7}
        \Big| S(f-f_{k_\ell})(t,x) \Big|
            & \leq\int_{\mathbb{R}^n}|f(y)-f_{k_\ell}(y)||t^\beta \partial _t^\beta P_t(y-x)|dy \nonumber \\
            &\leq C\int_{\mathbb{R}^n}|f(y)-f_{k_\ell}(y)|\frac{t^\beta }{(t+|x-y|)^{n+\beta }}dy\nonumber\\
            &\leq C\|f-f_{k_\ell}\|_{L^p(\mathbb{R}^n)}t^\beta \left(\int_{\mathbb{R}^n}\frac{1}{(t+|z|)^{(n+\beta )p'}}dz\right)^{1/p'}\nonumber\\
            &\leq C t^{-n/p}\|f-f_{k_\ell}\|_{L^p(\mathbb{R}^n)},\quad x\in \mathbb{R}^n \mbox{ and } t>0.
    \end{align}
    Here $p'=p/(p-1)$. From (\ref{2.7}) we deduce that, for each $\ell \in \N$ and $\eps>0$,
    \begin{align*}
        \Big\|[JS(f_{k_\ell }-f)](x)\Big\|_{L^2(\mathbb{R}^n\times (\varepsilon ,\infty ),\frac{dydt}{t^{n+1}})}^2
            &\leq C\|f_{k_\ell}-f\|_{L^p(\mathbb{R}^n)}^2\int_\varepsilon ^\infty \int_{B(x,t)}t^{-2n/p}\frac{dydt}{t^{n+1}}\\
            &\leq C\|f_{k_\ell}-f\|_{L^p(\mathbb{R}^n)}^2\varepsilon ^{-2n/p}, \quad x\in \mathbb{R}^n.
    \end{align*}
    Then, for every $\varepsilon >0$ and $x\in \mathbb{R}^n$,
    \begin{equation}\label{2.8}
        J(\mathbb{S}f_{k_\ell})(x)\longrightarrow J(\mathfrak{S}f)(x), \quad \mbox{as }\ell \rightarrow \infty, \mbox{ in }L^2\Big(\mathbb{R}^n\times (\varepsilon ,\infty ),\frac{dydt}{t^{n+1}}\Big).
    \end{equation}
    From (\ref{2.6.1}) and (\ref{2.8}) it follows that, for every $x\in \Omega$, $J(\mathfrak{S}f)(x)=g(x)$
    as elements of $H$.
    We conclude that
    $$J(\mathbb{S}f_k)\longrightarrow J(\mathfrak{S}f), \quad  \text{as } k\rightarrow \infty,
        \text{ in } L^p(\mathbb{R}^n,H).$$
    Thus, we prove that
    $$J(\mathbb{S}f_k)\longrightarrow J(\mathfrak{S}f), \quad  \text{as } k\rightarrow \infty,
        \text{ in } L^p(\mathbb{R}^n,\gamma(H,\B)).$$
    Hence, $\mathfrak{S}f=\widetilde{S_\C}f$.

    Almost the same proof works for every $f\in L^p(\mathbb{R}^n,\B)$. Let $f\in L^p(\mathbb{R}^n,\B)$.
    We choose a sequence $(f_k)_{k\in \mathbb{N}}\subset C_c^\infty (\mathbb{R}^n)\otimes \B$ such that
    $$f_k\longrightarrow f, \quad \text{as } k\rightarrow \infty, \text{ in } L^p(\mathbb{R}^n,\B).$$
    Then,
    $$ S_\B f_k \longrightarrow \widetilde {S_\B}f, \quad \text{as } k\rightarrow \infty, \text{ in } T_p^2(\mathbb{R}^n,\B).$$
    Hence, there exists $G\in L^p(\mathbb{R}^n,\gamma(H, \B))$ such that
    $$J(S_\B f_k)\longrightarrow G,\quad \mbox{ as }k\rightarrow \infty,\mbox{ in }L^p(\mathbb{R}^n, \gamma(H,\B)).$$
    There exists an increasing sequence $(k_\ell )_{\ell\in \mathbb{N}}\subset \mathbb{N}$
    and a set $\Omega \subset \mathbb{R}^n$ such that $|\mathbb{R}^n\setminus \Omega|=0$ and
    $$ J(S_\B f_{k_\ell })(x)\longrightarrow G(x), \quad \text{as } \ell \rightarrow \infty, \text{ in } \gamma(H,\B),$$
    for every $x\in \Omega$.

    By proceeding as above we can see that, for every $\varepsilon >0$ and $x\in \mathbb{R}^n$,
    $$J(S_\B f_{k_\ell })(x)\longrightarrow J(Sf)(x),\mbox{ as }\ell \rightarrow \infty, \text{ in } L^2\Big(\mathbb{R}^n\times (\varepsilon ,\infty ),\frac{dydt}{t^{n+1}};\B\Big).$$

    Since $\gamma(H,\B)$  is continuously contained in the space $\mathcal{L}(H,\B)$
    of bounded linear operators from $H$ into $\B$, we have that, for every $x\in \Omega$,
    $$J(S_\B f_{k_\ell })(x)\longrightarrow G(x), \quad \mbox{ as }\ell \rightarrow \infty , \mbox{ in }\mathcal{L}(H,\B).$$
    Assume that $h\in C_c^\infty (\mathbb{R}^n\times (0,\infty ))$ and $L\in \B^*$.
    Then, for every $x\in \Omega$,
    $$\langle L,[J(S_\B f_{k_\ell})(x)](h)\rangle _{\B^*,\B}\longrightarrow \langle L, [G(x)](h)\rangle_{\B^*,\B},\quad \mbox{ as }\ell \rightarrow \infty,$$
being
$$ 
[J(S_\B f_{k_\ell})(x)](h)=\int_{\mathbb{R}^n\times (0,\infty )}[J(S_\B f_{k_\ell})(x)](y,t)h(y,t)\frac{dydt}{t^{n+1}},\quad \ell \in \mathbb{N}.
$$

    Suppose that $g=\sum_{m=1}^s b_mg_m$, where $s \in \N$, $b_m\in \B$ and $g_m\in C_c^\infty (\mathbb{R}^n)$,
    $m=1, \dots, s$.
    There exists a set $W\subset \mathbb{R}^n$ such that $|\mathbb{R}^n\setminus W|=0$ and, for every $x\in W$,
    $$[J(S_\B g)(x)](y,t)
        =\chi _{B(x,t)}(y,t)(S_\B g)(y,t)=\sum_{m=1}^sb_m\chi _{B(x,t)}(y,t)(Sg_m)(y,t),\quad (y,t)\in \mathbb{R}^n\times (0,\infty ),$$
    is in $L^2\Big(\mathbb{R}^n\times (0 ,\infty ),\frac{dydt}{t^{n+1}};\B\Big)$ and
    \begin{align*}
        \langle L, [J(S_\B g)(x)](h)\rangle _{\B^*,\B}
            &= \sum_{m=1}^s\langle L,b_m\rangle _{\B^*,\B}\int_{\Gamma (x)}(Sg_m)(y,t)h(y,t)\frac{dydt}{t^{n+1}}\\
            &= \int_{\mathbb{R}^n\times (0,\infty )}\langle L,[J(S_\B g)(x)](y,t)\rangle  _{\B^*,\B} h(y,t)\frac{dydt}{t^{n+1}},\quad x\in W.
    \end{align*}

    For every $\ell \in \mathbb{N}$, we denote by $W_\ell$ the set in $\mathbb{R}^n$
    associated with $f_{k_\ell}$ as above. We define $\Omega _0=\Omega \cap (\cap_{\ell \in \mathbb{N}}W_\ell )$.
    We have that $|\mathbb{R}^n\setminus \Omega _0|=0$.

    Let $x\in \Omega _0$. We can write
    \begin{align*}
        \langle L, [G(x)](h)\rangle _{\B^*,\B}&=\lim_{\ell \rightarrow  \infty} \langle L, [J(S_\B f_{k_\ell })(x)](h)\rangle _{\B^*,\B}\\
        &=\lim_{\ell \rightarrow \infty }\int_{\mathbb{R}^n\times (0,\infty )}\langle L, [J(S_\B f_{k_\ell })(x)](y,t)\rangle _{\B^*,\B}h(y,t)\frac{dydt}{t^{n+1}}\\
        &=\int_{\mathbb{R}^n\times (0,\infty )}\langle L, [J(Sf)(x)](y,t)\rangle _{\B^*,\B}h(y,t)\frac{dydt}{t^{n+1}}.
    \end{align*}
    We conclude that $\langle L,  [J(Sf)(x)](\cdot, \cdot )\rangle _{\B^*,\B}\in H$, $x\in \Omega _0$.
    Then, for every $x\in \Omega _0$,
    $$G(x)
        =[J(Sf)(x)](\cdot, \cdot ),$$
    as elements of $\gamma(H,\B)$.

    Hence, $Sf=\widetilde{S_\B}f$. Thus, (\ref{2.0.1}) is proved.
\end{proof}

Before establishing the first inequality in Theorem~\ref{teo 1.1} we need the following polarization formula.

\begin{lema}\label{Lem2.3}
    Let $f\in C_c^\infty (\mathbb{R}^n)\otimes \B$ and $g\in C_c^\infty (\mathbb{R}^n)\otimes \B^*$. Then,
    $$\int_{\mathbb{R}^n}\int_{\Gamma (x)}\langle t^\beta \partial _t^\beta P_t(g)(y),t^\beta \partial _t^\beta P_t(f)(y)\rangle_{\B^*,\B}\frac{dydt}{t^{n+1}}dx
        =v_n \frac{\Gamma (2\beta )}{2^{2\beta }}\int_{\mathbb{R}^n} \langle g(y),f(y)\rangle _{\B^*,\B}dy,$$
    where $v_n$ denotes the volume of the unit ball in $\R^n$.
\end{lema}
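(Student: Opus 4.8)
The plan is to reduce to scalar functions and then reproduce, in polarized form, the computation of $\|\mathbb{S}g\|_{T_2^2(\mathbb{R}^n)}^2$ carried out in the proof of Lemma~\ref{Lem2.1}. By linearity of both sides in $f$ and in $g$ (with the corresponding conjugation convention for the duality bracket), it suffices to establish the identity when $f=a\otimes\phi$ and $g=b\otimes\psi$, with $a\in\B$, $b\in\B^*$ and $\phi,\psi\in C_c^\infty(\mathbb{R}^n)$. Since $\partial_t^\beta P_t$ is a scalar integral operator, $t^\beta\partial_t^\beta P_t(f)(y)=(t^\beta\partial_t^\beta P_t(\phi)(y))\,a$ and $t^\beta\partial_t^\beta P_t(g)(y)=(t^\beta\partial_t^\beta P_t(\psi)(y))\,b$, so after cancelling the common factor $\langle b,a\rangle_{\B^*,\B}$ the statement becomes the scalar polarization of the isometry of Lemma~\ref{Lem2.1}, namely
$$\int_{\mathbb{R}^n}\int_{\Gamma(x)}t^\beta\partial_t^\beta P_t(\phi)(y)\,\overline{t^\beta\partial_t^\beta P_t(\psi)(y)}\,\frac{dydt}{t^{n+1}}\,dx
    =v_n\frac{\Gamma(2\beta)}{2^{2\beta}}\int_{\mathbb{R}^n}\phi(y)\overline{\psi(y)}\,dy .$$

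To prove this scalar identity I would argue exactly as in Lemma~\ref{Lem2.1}. First I interchange the order of integration: $(y,t)\in\Gamma(x)$ exactly when $x\in B(y,t)$, and $|B(y,t)|=v_nt^n$, so Fubini's theorem --- legitimate because $\phi,\psi\in C_c^\infty(\mathbb{R}^n)$ and $t^\beta\partial_t^\beta P_t(\cdot)$ obeys the pointwise bound (\ref{2.3}), whence the integrand is absolutely integrable --- turns the left-hand side into $v_n\int_0^\infty\int_{\mathbb{R}^n}t^\beta\partial_t^\beta P_t(\phi)(y)\,\overline{t^\beta\partial_t^\beta P_t(\psi)(y)}\,\frac{dydt}{t}$. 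Next I use the Fourier representation (\ref{eqB}), $\partial_t^\beta P_t(h)(y)=e^{i\pi\beta}(|z|^\beta e^{-t|z|}\widehat h(z))^{\widecheck{\;}}(y)$; the unimodular factors $e^{i\pi\beta}$ (from $\phi$) and $\overline{e^{i\pi\beta}}=e^{-i\pi\beta}$ (from $\psi$) cancel, and Plancherel's theorem in $y$ gives, for each fixed $t>0$,
$$\int_{\mathbb{R}^n}t^\beta\partial_t^\beta P_t(\phi)(y)\,\overline{t^\beta\partial_t^\beta P_t(\psi)(y)}\,dy
    =\int_{\mathbb{R}^n}(t|z|)^{2\beta}e^{-2t|z|}\,\widehat\phi(z)\overline{\widehat\psi(z)}\,dz .$$
Finally, the substitution $u=2t|z|$ yields $\int_0^\infty(t|z|)^{2\beta}e^{-2t|z|}\frac{dt}{t}=2^{-2\beta}\Gamma(2\beta)$, independently of $z$, so integrating in $t$ and invoking Plancherel's theorem once more gives $v_n\,2^{-2\beta}\Gamma(2\beta)\int_{\mathbb{R}^n}\widehat\phi(z)\overline{\widehat\psi(z)}\,dz=v_n\,2^{-2\beta}\Gamma(2\beta)\int_{\mathbb{R}^n}\phi(y)\overline{\psi(y)}\,dy$, which is the claim.

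Since this is precisely the calculation already performed on the diagonal in the proof of Lemma~\ref{Lem2.1}, there is no genuine obstacle here; the only points that demand a little care are the justification of the two interchanges of integration (handled by (\ref{2.3}) together with $\phi,\psi\in C_c^\infty(\mathbb{R}^n)$) and the bookkeeping of the unimodular factors $e^{\pm i\pi\beta}$ coming from the Segovia--Wheeden fractional derivative and of the constant $v_n\Gamma(2\beta)/2^{2\beta}$.
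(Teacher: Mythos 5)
Your proposal follows essentially the same route as the paper: reduce by bilinearity to elementary tensors, cancel the bracket, and then redo on the cone the Plancherel computation already carried out in the proof of Lemma~\ref{Lem2.1}, using \eqref{eqB}, Plancherel in $y$, and the Gamma integral $\int_0^\infty(t|z|)^{2\beta}e^{-2t|z|}\frac{dt}{t}=2^{-2\beta}\Gamma(2\beta)$. Two points, however, need repair.

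First, your justification of the interchange of integration is insufficient as stated: the kernel bound \eqref{2.3} only yields $|t^\beta\partial_t^\beta P_t(\phi)(y)|\leq C\|\phi\|_{L^\infty(\R^n)}$ uniformly for $y$ near the support of $\phi$, with no smallness as $t\to 0^+$, and $\int_0^1\frac{dt}{t}$ diverges, so \eqref{2.3} alone does not give absolute convergence of $\int_0^\infty\int_{\R^n}|t^\beta\partial_t^\beta P_t(\phi)(y)|\,|t^\beta\partial_t^\beta P_t(\psi)(y)|\frac{dydt}{t}$. The paper instead justifies this by Cauchy--Schwarz together with the $L^2$-boundedness of the Littlewood--Paley--Stein function $g_\beta$; alternatively, \eqref{eqB} gives $|t^\beta\partial_t^\beta P_t(\phi)(y)|\leq Ct^\beta\||z|^\beta\widehat{\phi}\|_{L^1(\R^n)}$, which combined with \eqref{2.3} for large $t$ also suffices. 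Second, the conjugation bookkeeping: the bracket $\langle\cdot,\cdot\rangle_{\B^*,\B}$ is bilinear, so for $f=a\otimes\phi$, $g=b\otimes\psi$ the left-hand side reduces to $\langle b,a\rangle_{\B^*,\B}$ times the \emph{bilinear} scalar product of $t^\beta\partial_t^\beta P_t(\phi)$ and $t^\beta\partial_t^\beta P_t(\psi)$, not to the sesquilinear identity you state. For real-valued $\psi$ one has $t^\beta\partial_t^\beta P_t(\psi)(y)=e^{2i\pi\beta}\,\overline{t^\beta\partial_t^\beta P_t(\psi)(y)}$ with the Segovia--Wheeden normalization, so the bilinear product equals $e^{2i\pi\beta}$ times your sesquilinear one; this is exactly the unimodular factor that appears in the analogous formulas \eqref{5.15} and Lemma~\ref{Lem6.3}. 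The paper removes it by first writing $t^\beta\partial_t^\beta P_t(g)(y)=\overline{t^\beta\partial_t^\beta P_t(\overline{g})(y)}$ and only then applying Plancherel, and you should either insert that step or record the factor $e^{2i\pi\beta}$; the discrepancy is harmless for Lemma~\ref{Lem2.2}, which only uses the modulus of the polarization identity, but it matters if one wants the constant exactly as stated.
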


\begin{proof}
    Suppose firstly that $f,g\in C_c^\infty (\mathbb{R}^n)$.
    According to \eqref{eqB} and by using Plancherel equality we get
    \begin{align}\label{2.9}
        &\int_{\mathbb{R}^n}\int_{\Gamma (x)}t^\beta \partial _t^\beta P_t(f)(y)t^\beta \partial _t^\beta P_t(g)(y)\frac{dydt}{t^{n+1}}dx
	=\int_{\mathbb{R}^n}\int_{\Gamma (x)}t^\beta \partial _t^\beta P_t(f)(y)\overline{t^\beta \partial _t^\beta P_t(\overline{g})(y)}\frac{dydt}{t^{n+1}}dx\nonumber\\
        & \qquad =\int_{\mathbb{R}^n}\int_{\Gamma (x)}[(|z|t)^\beta e^{-t|z|}\widehat{f}]^{\widecheck{\;}}(y) \overline{[(|z|t)^\beta e^{-t|z|}\widehat{\overline{g}}]^{\widecheck{\;}}(y)}\frac{dydt}{t^{n+1}}dx \nonumber \\
        & \qquad =v_n\int_0^\infty \int_{\mathbb{R}^n}(|z|t)^{2\beta }e^{-2t|z|}\widehat{f}(z)\overline{\widehat{\overline{g}}(z)}\frac{dzdt}{t}
         =v_n \frac{\Gamma (2\beta)}{2^{2\beta }}\int_{\mathbb{R}^n}f(y)g(y)dy.
    \end{align}
    The interchanges in the order of integration are justified because the integrals are absolutely convergent.
    In order to see this it is sufficient recall that the Littlewood-Paley-Stein function defined by
    $$g_\beta (f)(x)
        =\left(\int_0^\infty |t^\beta \partial _t^\beta P_t(f)(x)|^2\frac{dt}{t}\right)^{1/2}$$
    is bounded from $L^2(\mathbb{R}^n)$ into itself.

    From (\ref{2.9}) we easily conclude the proof of this lemma.
\end{proof}

\begin{lema}\label{Lem2.2}
    Let $\B$ be a UMD Banach space, $1<p<\infty$ and $\beta>0$.
    Then, there exists $C>0$ such that
    $$\|f\|_{L^p(\mathbb{R}^n,\B)}
            \leq C \|t^\beta \partial _t^\beta P_t(f)\|_{T_p^2(\mathbb{R}^n,\B)},\quad f \in L^p(\mathbb{R}^n,\B).$$
\end{lema}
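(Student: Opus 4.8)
The plan is to obtain the lower bound by duality, using the polarization identity of Lemma \ref{Lem2.3} together with the boundedness result of Lemma \ref{Lem2.1} applied to the dual exponent and the dual space. Recall that $\B^*$ is also a UMD Banach space and that $p'=p/(p-1)\in(1,\infty)$; moreover the dual of the tent space $T_p^2(\mathbb{R}^n,\B)$ can be identified (via the $H$-duality $\gamma(H,\B)^*\cong\gamma(H,\B^*)$ and the known duality for tent spaces from \cite{HNP}) with $T_{p'}^2(\mathbb{R}^n,\B^*)$, the pairing being
$$\langle F,G\rangle=\int_{\mathbb{R}^n}\int_{\Gamma(x)}\langle G(y,t),F(y,t)\rangle_{\B^*,\B}\,\frac{dydt}{t^{n+1}}\,dx.$$
Fix $f\in C_c^\infty(\mathbb{R}^n)\otimes\B$. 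For any $g\in C_c^\infty(\mathbb{R}^n)\otimes\B^*$, Lemma \ref{Lem2.3} gives
$$\int_{\mathbb{R}^n}\langle g(y),f(y)\rangle_{\B^*,\B}\,dy
=\frac{2^{2\beta}}{v_n\,\Gamma(2\beta)}\int_{\mathbb{R}^n}\int_{\Gamma(x)}\langle t^\beta\partial_t^\beta P_t(g)(y),t^\beta\partial_t^\beta P_t(f)(y)\rangle_{\B^*,\B}\,\frac{dydt}{t^{n+1}}\,dx.$$
Estimating the right-hand side by the tent-space pairing and then applying Lemma \ref{Lem2.1} to $\B^*$ and $p'$, we bound the modulus of the right-hand side by
$$C\,\|t^\beta\partial_t^\beta P_t(f)\|_{T_p^2(\mathbb{R}^n,\B)}\,\|t^\beta\partial_t^\beta P_t(g)\|_{T_{p'}^2(\mathbb{R}^n,\B^*)}
\leq C\,\|t^\beta\partial_t^\beta P_t(f)\|_{T_p^2(\mathbb{R}^n,\B)}\,\|g\|_{L^{p'}(\mathbb{R}^n,\B^*)}.$$
Taking the supremum over all such $g$ with $\|g\|_{L^{p'}(\mathbb{R}^n,\B^*)}\leq1$, and using that $C_c^\infty(\mathbb{R}^n)\otimes\B^*$ is norming for $L^p(\mathbb{R}^n,\B)$ (so that $\|f\|_{L^p(\mathbb{R}^n,\B)}$ equals that supremum), yields
$$\|f\|_{L^p(\mathbb{R}^n,\B)}\leq C\,\|t^\beta\partial_t^\beta P_t(f)\|_{T_p^2(\mathbb{R}^n,\B)}$$
for $f$ in the dense subspace $C_c^\infty(\mathbb{R}^n)\otimes\B$; a density argument, using Lemma \ref{Lem2.1} to control the square-function norm along an approximating sequence, then extends the inequality to all of $L^p(\mathbb{R}^n,\B)$.

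The main obstacle I anticipate is making rigorous the duality pairing between $T_p^2(\mathbb{R}^n,\B)$ and $T_{p'}^2(\mathbb{R}^n,\B^*)$ at the level needed here: strictly speaking one should check that the bilinear form arising from Lemma \ref{Lem2.3} extends continuously to the tent spaces and agrees with the $\gamma$-radonifying pairing, which requires the trace-duality $\gamma(H,\B)^*\cong\gamma(H,\B^*)$ (valid since $\B$ is UMD, hence $K$-convex and reflexive) together with the tent-space duality established in \cite{HNP}. An alternative, avoiding the abstract duality statement, is to work directly: represent $f$ against $g$ through the polarization identity, then for fixed $x$ expand the inner integral over $\Gamma(x)$ as an $H$-pairing of $J(t^\beta\partial_t^\beta P_t(f))(x)\in\gamma(H,\B)$ with $J(t^\beta\partial_t^\beta P_t(g))(x)\in\gamma(H,\B^*)$, bound it pointwise by $\|J(t^\beta\partial_t^\beta P_t(f))(x)\|_{\gamma(H,\B)}\,\|J(t^\beta\partial_t^\beta P_t(g))(x)\|_{\gamma(H,\B^*)}$, integrate in $x$ via Hölder's inequality in $L^p$–$L^{p'}$, and finish with Lemma \ref{Lem2.1}. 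This second route keeps everything within tools already introduced in the excerpt and is the one I would present.
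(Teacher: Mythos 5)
Your proposal is correct, and the direct route you say you would present is exactly the paper's argument: polarization (Lemma \ref{Lem2.3}), the pointwise bound of the $H$-pairing by $\|\cdot\|_{\gamma(H,\B)}\|\cdot\|_{\gamma(H,\B^*)}$ (which the paper takes from \cite[Proposition 2.4]{HW}), H\"older in $x$, Lemma \ref{Lem2.1} applied to $\B^*$ and $p'$ (legitimate since $\B^*$ is UMD), and a density argument using that UMD spaces are reflexive so $C_c^\infty(\mathbb{R}^n)\otimes\B^*$ is norming. The abstract tent-space duality of your first route is not needed and is not what the paper uses.
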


\begin{proof}
    Let $f\in C_c^\infty (\mathbb{R}^n)\otimes \B$ and $g\in C_c^\infty (\mathbb{R}^n)\otimes \B^*$.
    Since $\B$ is a UMD Banach space, $\B^*$ has also the UMD property.
    According to Lemma~\ref{Lem2.1}, Lemma~\ref{Lem2.3} and \cite[Proposition 2.4]{HW} we can write
    \begin{align*}
        & \left|\int_{\mathbb{R}^n} \langle g(y),f(y)\rangle _{\B^*,\B}dy\right| \\
            & \qquad \leq C \int_{\mathbb{R}^n}\int_{\mathbb{R}^n\times (0,\infty )}\left|\langle \chi _{B(x,t)}(y,t)t^\beta \partial _t^\beta P_t(g)(y),\chi _{B(x,t)}(y,t)t^\beta \partial _t^\beta P_t(f)(y)\rangle _{\B^*,\B}\right|\frac{dydt}{t^{n+1}}dx\\
            & \qquad \leq C \int_{\mathbb{R}^n}\| \chi _{B(x,t)}(y,t)t^\beta \partial _t^\beta P_t(f)(y)\|_{\gamma (H,\B)}
                \| \chi _{B(x,t)}(y,t)t^\beta \partial _t^\beta P_t(g)(y)\|_{\gamma (H,\B^*)}dx\\
            & \qquad \leq C \|t^\beta \partial _t^\beta P_t(f)(y)\|_{T_p^2(\mathbb{R}^n,\B)}\|t^\beta \partial _t^\beta P_t(g)(y)\|_{T_{p'}^2(\mathbb{R}^n,\B^*)}\\
            & \qquad \leq C\|t^\beta \partial _t^\beta P_t(f)(y)\|_{T_p^2(\mathbb{R}^n,\B)}\|g\|_{L^p(\mathbb{R}^n,\B^*)}.
    \end{align*}
Then, since $C_c^\infty (\mathbb{R}^n)\otimes \B$ is a dense subspace of $L^p(\mathbb{R}^n,\B)$, by taking into account Lemma \ref{Lem2.1} we conclude the proof of this lemma.
\end{proof}

\section{Proof of Theorem \ref{Theorem 1.2}, $(i)$}\label{sec:Schrodinger}

\subsection{}
Suppose that $V\in RH_s(\R^n)$, where $s > n/2$ and
$n \geq 3$. We are going to show that, for every $f \in L^p(\R^n,\B)$,
\begin{equation*}\label{3.1}
    \|t^\beta \partial_t^\beta P_t^{\mathfrak{L}_V}(f)\|_{T_p^2(\R^n,\B)}
        \leq C\|f\|_{L^p(\R^n,\B)}.
\end{equation*}
In order to see this we cannot proceed as in the proof of Lemma~\ref{Lem2.1}.
In this Schr\"odinger case \cite[Theorem 4.8]{HNP} does not apply because
$\partial_tP_t^{\mathfrak{L}_V}(1) \not\equiv 0$. Note that, since
$\displaystyle  \lim_{t \rightarrow 0^+}P_t^{\mathfrak{L}_V}(1)(x)=1$,
$ x\in \R^n$, if $\partial_tP_t^{\mathfrak{L}_V}(1)(x)=0$, $t>0$ and
$x \in\R^n$, then $P_t^{\mathfrak{L}_V}(1)(x)=1$, $t>0$ and $x \in \R^n$, and we could infer that
$$0=(\partial_{tt}-\mathfrak{L}_V)P_t^{\mathfrak{L}_V}(1)(x)
    =-V(x)P_t^{\mathfrak{L}_V}(1)(x)=-V(x).$$
Hence, since $V \not\equiv 0$, we have that $\partial_t P_t^{\mathfrak{L}_V}(1)(x)\not\equiv 0$.

We define, for every $x \in \R^n$,
$$\rho(x)
    =\sup \left\{r>0: \frac{1}{r^{n-2}}\int_{B(x,r)} V(y) dy \leq 1\right\}.$$
The function $\rho$ (usually called critical radius) plays an
important role in the development of the harmonic analysis
associated with $\mathfrak{L}_V$ (see, for instance,
\cite{DGMTZ},
\cite{DZ1},
\cite{DZ3} and
\cite{Sh}). The main properties of
$\rho$ can be encountered in \cite[Section 1]{Sh}.

Suppose that $f =\sum_{j=1}^m b_j f_j$, where $b_j \in \B$ and $f_j \in C_c^\infty(\R^n)$, $j=1,\dots,m$.
It is clear that
$$t^\beta\partial_t^\beta P_t^{\mathfrak{L}_V}(f)(y)
    = \sum_{j=1}^m b_j t^\beta\partial_t^\beta P_t^{\mathfrak{L}_V}(f_j)(y), \quad t>0\;\;\mbox{and}\;\;y \in \R^n.$$
Moreover, for almost every $x\in \R^n$, we have that
$\chi_{\Gamma(x)}(y,t)t^\beta \partial_t^\beta P_t^{\mathfrak{L}_V}(f)(y) \in H \otimes \B$.
Indeed, let $g \in L^2(\R^n)$, we can write
\begin{align*}
    \int_{\R^n}\int_{\Gamma(x)} \left|t^\beta\partial_t^\beta P_t^{\mathfrak{L}_V}(g)(y)\right|^2\frac{dtdy}{t^{n+1}}
        &= v_n\int_{\R^n}\int_0^\infty \left|t^\beta\partial_t^\beta P_t^{\mathfrak{L}_V}(g)(y)\right|^2 \frac{dtdy}{t},
\end{align*}
where once again $v_n$ denotes the volume of the unit ball in $\R^n$.
Then, according to \cite[Theorem A]{AST}, it follows that
$$\int_{\R^n}\int_{\Gamma(x)}\left|t^\beta\partial_t^\beta P_t^{\mathfrak{L}_V}(g)(y)\right|^2\frac{dydt}{t^{n+1}}dx
    \leq C \|g\|^2_{L^2(\R^n)}.$$
Hence, for almost $x\in \R^n$,
$t^\beta \partial_t^\beta P_t^{\mathfrak{L}_V}(g)(y) \chi_{\Gamma(x)}(t,y) \in H \otimes \B$.

To simplify, we write
$$ K^{\mathfrak{L}_V}(f)(x;y,t)
    = t^\beta \partial_t^\beta P_t^{\mathfrak{L}_V}(f)(y)\chi_{\Gamma(x)}(y,t), \quad x,y\in \R^n\;\;\mbox{and}\;\; t>0.$$
We decompose the operator $K^{\mathfrak{L}_V}$ as follows:
$$ K^{\mathfrak{L}_V}(f)(x;y,t)
    = K^{\mathfrak{L}_V}_{loc}(f)(x;y,t) + K^{\mathfrak{L}_V}_{glob}(f)(x;y,t), \quad x,y\in \R^n\;\;\mbox{and}\;\; t>0,$$
where
$$ K^{\mathfrak{L}_V}_{loc}(f)(x;y,t)
    = K^{\mathfrak{L}_V}(f\chi_{B(x,\rho(x))})(x;y,t), \quad x,y\in \R^n\;\;\mbox{and}\;\; t>0.$$
The operators $K^{\mathfrak{L}_V}_{loc}$ and $K^{\mathfrak{L}_V}_{glob}$
are usually called local and global part of $K^{\mathfrak{L}_V}$, respectively.
We also consider the operators
$$ \mathbb K(f)(x;y,t)
    = t^\beta \partial^\beta_tP_t(f)(y,t)\chi_{\Gamma(x)}(y,t), \quad x,y \in \R^n\;\;\mbox{and}\;\; t>0,$$
and
$$ \mathbb K_{loc}(f)(x;y,t)
    = \mathbb{K}(f\chi_{B(x,\rho(x))})(x;y,t), \quad x,y \in \R^n\;\;\mbox{and}\;\; t>0.$$
We can write
\begin{equation}\label{3.2}
    K^{\mathfrak{L}_V}(f)(x;y,t)
        = D^{\mathfrak{L}_V}(f)(x;y,t) + \mathbb{K}_{loc}(f)(x;y,t) + K^{\mathfrak{L}_V}_{glob}(f)(x;y,t),
        \quad x,y \in \R^n\;\;\mbox{and}\;\;t>0,
\end{equation}
where
$$D^{\mathfrak{L}_V}(f)(x;y,t)
    = K^{\mathfrak{L}_V}_{loc}(f)(x;y,t)-\mathbb{K}_{loc}(f)(x;y,t), \quad x,y \in \R^n\;\;\mbox{and}\;\;t>0.$$
The three terms in the right hand side of \eqref{3.2} are
studied separately in the following lemmas.

\begin{lema}\label{Lem3.1}
    Let $\B$ be a Banach space and $1<p<\infty$. Then,
    $$\| K^{\mathfrak{L}_V}_{glob}(f) \|_{L^p(\R^n,\gamma(H,\B))}
        \leq C \|f\|_{L^p(\R^n,\B)}, \quad f \in C_c^\infty(\R^n) \otimes \B.$$
\end{lema}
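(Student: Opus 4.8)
The plan is to reduce the $\gamma$-radonifying norm estimate to a pointwise kernel bound together with a vector-valued operator theory argument, exploiting the exponential decay gained from being away from the critical radius. First I would recall the Feynman–Kac bound $|W_t^{\mathfrak{L}_V}(x,y)| \leq C t^{-n/2} e^{-|x-y|^2/(4t)}$ together with the improvement coming from $V \in RH_s(\R^n)$, namely the known estimate (see Shen \cite{Sh}, Dziubański–Zienkiewicz \cite{DZ1}) that for every $N>0$ there is $C_N>0$ with
$$
W_t^{\mathfrak{L}_V}(x,y) \leq C_N \frac{e^{-|x-y|^2/(ct)}}{t^{n/2}}\Big(1 + \frac{\sqrt t}{\rho(x)} + \frac{\sqrt t}{\rho(y)}\Big)^{-N}.
$$
Subordinating via the formula defining $P_t^{\mathfrak{L}_V}$ and applying the fractional derivative $t^\beta \partial_t^\beta$ (using, as in Lemma \ref{Lem2.1}, the Segovia–Wheeden integral representation and the resulting size control of the classical Poisson-type kernels), I expect to obtain a pointwise bound of the form
$$
\big| t^\beta \partial_t^\beta P_t^{\mathfrak{L}_V}(x,y) \big| \leq C_N \frac{t^\beta}{(t+|x-y|)^{n+\beta}}\Big(1 + \frac{t}{\rho(x)} + \frac{t}{\rho(y)}\Big)^{-N}, \quad x,y\in\R^n,\ t>0.
$$

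Next I would pass from this pointwise bound to the $L^p(\R^n,\gamma(H,\B))$ estimate for the global part. Since $K^{\mathfrak{L}_V}_{glob}(f)(x;y,t) = \chi_{\Gamma(x)}(y,t)\, t^\beta\partial_t^\beta P_t^{\mathfrak{L}_V}\big(f\,\chi_{\R^n\setminus B(x,\rho(x))}\big)(y)$, on the cone $|y-x|<t$ the integration variable $z$ in $P_t^{\mathfrak{L}_V}(f\chi_{\R^n\setminus B(x,\rho(x))})(y)=\int P_t^{\mathfrak{L}_V}(y,z)f(z)\,dz$ satisfies $|z-x|\geq \rho(x)$, hence $|z-y|\gtrsim |z-x|$ and we may extract a factor $(1 + |z-x|/\rho(x))^{-N'}$ from the decay above. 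Thus for $x$ fixed and $(y,t)\in\Gamma(x)$,
$$
\big\| K^{\mathfrak{L}_V}_{glob}(f)(x;\cdot,\cdot)\big\|_{\B} \leq C \int_{\R^n} \frac{t^\beta}{(t+|x-z|)^{n+\beta}}\Big(1+\frac{|x-z|}{\rho(x)}\Big)^{-N'}\|f(z)\|_\B\, dz,
$$
and taking the $H$-norm in $(y,t)$ over $\Gamma(x)$ (which, by the elementary computation $\int_{\Gamma(x)} t^{2\beta}(t+|x-z|)^{-2n-2\beta}\,\frac{dydt}{t^{n+1}} \lesssim (\rho(x)+|x-z|)^{-2n}$ after also using the $\rho$-decay, or more simply bounding $\chi_{\Gamma(x)}$ crudely and using $|y-x|<t$) gives
$$
\big\| K^{\mathfrak{L}_V}_{glob}(f)(x)\big\|_{\gamma(H,\B)} \leq C \int_{\R^n} \frac{\|f(z)\|_\B}{(|x-z|+\rho(x))^{n}}\Big(1+\frac{|x-z|}{\rho(x)}\Big)^{-N''} dz.
$$

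Finally I would invoke the scalar theory of the critical-radius kernel operators: the right-hand side is exactly (a variant of) the operator with kernel $T(x,z)=(|x-z|+\rho(x))^{-n}(1+|x-z|/\rho(x))^{-N''}$, which is well known (see \cite{DZ1}, \cite{BHS2}) to be bounded on $L^p(\R^n)$ for every $1<p<\infty$ once $N''$ is chosen large enough, because $\sup_x \int_{\R^n} T(x,z)\,dz < \infty$ and, using Shen's doubling-type estimate $\rho(z)\sim\rho(x)$ when $|x-z|\lesssim\rho(x)$ together with $\rho(x)\lesssim \rho(z)(1+|x-z|/\rho(z))^{k_0/(k_0+1)}$, one also controls $\sup_z\int T(x,z)\,dx$; Schur's lemma then finishes it. Applying this scalar bound to the nonnegative function $z\mapsto\|f(z)\|_\B$ yields $\|K^{\mathfrak{L}_V}_{glob}(f)\|_{L^p(\R^n,\gamma(H,\B))}\leq C\|\,\|f(\cdot)\|_\B\,\|_{L^p(\R^n)} = C\|f\|_{L^p(\R^n,\B)}$. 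The main obstacle I anticipate is establishing the $\rho$-dependent decay of $t^\beta\partial_t^\beta P_t^{\mathfrak{L}_V}(x,y)$ with a power $N$ high enough to both absorb the local-to-global gap and feed the Schur argument — this requires carefully combining the subordination formula with the fractional derivative and the reverse-Hölder heat-kernel estimates, and is the one place where the hypotheses $V\in RH_s$, $s>n/2$, and $n\geq 3$ are genuinely used.
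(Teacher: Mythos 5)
Your plan is sound and does yield the lemma, but it follows a genuinely different route from the paper. The paper never uses the arbitrary-order decay estimate for $W_t^{\mathfrak{L}_V}$: it only uses the one-power gain of \cite[(2.3)]{DGMTZ}, i.e.\ $W_u^{\mathfrak{L}_V}(y,z)\leq C\rho(y)\,u^{-(n+1)/2}e^{-c|y-z|^2/u}$, which after subordination and the bound $|\partial_t^\beta[te^{-t^2/4u}]|\leq Ce^{-t^2/8u}u^{(1-\beta)/2}$ gives $|t^\beta\partial_t^\beta P_t^{\mathfrak{L}_V}(y,z)|\leq Ct^\beta\rho(y)(t+|y-z|)^{-n-\beta-1}$; the factor $\rho(y)$ is then converted into $\rho(x)$ on the cone by \cite[Proposition 1]{DGMTZ}, $\rho(y)\leq C\rho(x)(1+t/\rho(x))^{\gamma}$ with $1/2\leq\gamma<1$, and the resulting scalar kernel is summed over the dyadic annuli $2^k\rho(x)\leq|x-z|<2^{k+1}\rho(x)$ to bound $\|K^{\mathfrak{L}_V}_{glob}(f)(x;\cdot,\cdot)\|_{\gamma(H,\B)}$ by $C\mathcal{M}(\|f\|_\B)(x)$, after which the $L^p$-boundedness of the Hardy--Littlewood maximal operator finishes the proof. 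Your version replaces this by the stronger (but known) $C_N$ heat-kernel decay plus a Schur test for the critical-radius kernel $T(x,z)$; this buys a shorter final step at the price of a heavier kernel estimate, whereas the paper's argument stays at the level of \cite{DGMTZ}, \cite{Sh} and produces the sharper pointwise majorant $\mathcal{M}(\|f\|_\B)$.

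Two points in your write-up need repair, though both are fixable. First, the justification ``$|z-y|\gtrsim|z-x|$ on the cone'' fails when $t\gtrsim|x-z|$ (then $y$ may be close to $z$); in that regime the factor $(1+|x-z|/\rho(x))^{-N'}$ must instead be extracted from the $(1+t/\rho(y))^{-N}$ decay combined with Shen's comparison $\rho(y)\leq C\rho(x)(1+t/\rho(x))^{\gamma}$, $\gamma<1$ --- exactly the inequality the paper uses, and which you only invoke later in the Schur step. Second, the order of your norms: bounding $\|K^{\mathfrak{L}_V}_{glob}(f)(x;y,t)\|_\B$ pointwise and then ``taking the $H$-norm'' controls $\|\,\|K^{\mathfrak{L}_V}_{glob}(f)(x;\cdot,\cdot)\|_\B\,\|_{H}$, which for a general Banach space $\B$ does not dominate the $\gamma(H,\B)$-norm; the correct (and clearly intended) route is to keep the $z$-integral outside, view $K^{\mathfrak{L}_V}_{glob}(f)(x)=\int f(z)\,k(x;\cdot,\cdot,z)\,dz$ as a Bochner integral in $\gamma(H,\B)$ with scalar kernel $k$, and apply Minkowski's inequality together with $\|b\otimes h\|_{\gamma(H,\B)}=\|b\|_\B\|h\|_{H}$, exactly as the paper does (it also verifies strong measurability of $x\mapsto K^{\mathfrak{L}_V}_{glob}(f)(x)$ via Pettis's theorem, a routine point you omit for $f\in C_c^\infty(\R^n)\otimes\B$). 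With these repairs your argument gives the stated bound for every Banach space $\B$ and $1<p<\infty$.
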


\begin{proof}
    Let $f =\sum_{j=1}^m b_j f_j$, where $b_j \in \B$ and $f_j \in C_c^\infty(\R^n)$, $j=1,\dots,m$.
    Firstly, we show that for every
    $x\in \R^n$, $K^{\mathfrak{L}_V}_{glob}(f)(x;\cdot,\cdot) \in \gamma(H,\B)$.
    Fix $x\in \R^n$. According to the subordination formula, we have that
    $$ P_t^{\mathcal{L}_V}(y,z)
        = \frac{t}{\sqrt{4\pi}}\int_0^\infty \frac{e^{-t^2/4u}}{u^{3/2}} W_u^{\mathcal{L}_V}(y,z)du,
        \quad y,z \in \R^n\;\;\mbox{and}\;\;t>0.$$
    By \cite[Lemma 4]{BCCFR1}
    \begin{equation}\label{3.3}
        |\partial_t^\beta [te^{-t^2/4u}]|
            \leq C e^{-t^2/8u}u^{(1-\beta)/2}, \quad t,u \in (0,\infty).
    \end{equation}
    Also, by tacking into account the Feynman-Kac formula (\cite[(2.2)]{DGMTZ}) and (\ref{3.3}) we can interchange
    the order of integration and differentiate under the integral sign to get
    $$ \partial_t^\beta P_t^{\mathcal{L}_V}(y,z)
        =\frac{1}{\sqrt{4\pi}}\int_0^\infty \frac{\partial_t^\beta[te^{-t^2/4u}]}{u^{3/2}}W_u^{\mathcal{L}_V}(y,z)du, \quad x,y\in\R^n\;\;\mbox{and}\;\;t>0.$$
    Then, by using \cite[(2.3)]{DGMTZ} and (\ref{3.3}) we obtain
    \begin{align}\label{3.4}
        |t^\beta\partial_t^\beta P_t^{\mathcal{L}_V}(y,z)|
            &\leq C t^\beta \rho(y) \int_0^\infty e^{-t^2/8u} u^{-(3+\beta+n)/2}e^{-c|y-z|^2/u}du\nonumber\\
            &\leq C\frac{ t^\beta\rho(y)}{(t+|y-z|)^{n+\beta+1}}, \quad x,y\in\R^n\;\;\mbox{and}\;\;t>0.
    \end{align}
    Estimation (\ref{3.4}) justifies the differentiation under the integral sign, and we get
    \begin{align*}
        &\left\|K^{\mathfrak{L}_V}_{glob}(f)(x;y,t)\right\|_\B\\
        & \qquad \leq C \chi_{\Gamma(x)}(y,t) \int_{\R^n \setminus B(x,\rho(x))}\frac{t^\beta\rho(y)}{(t+|y-z|)^{n+\beta+1}}\|f(z)\|_{\B} dz  \\
        & \qquad \leq C t^\beta \rho(y) \chi_{\Gamma(x)}(y,t)  \|f\|_{L^p(\R^n,\B)}\left(\int_{\R^n \setminus B(x,\rho(x))}\frac{dz}{(t+2|x-y|+|y-z|)^{(n+\beta+1)p'}}\right)^{1/p'}\\
        & \qquad \leq C t^\beta \rho(y) \chi_{\Gamma(x)}(y,t)  \|f\|_{L^p(\R^n,\B)}\left(\int_{\R^n \setminus B(x,\rho(x))}\frac{dz}{(t+|x-y|+|z-x|)^{(n+\beta+1)p'}}\right)^{1/p'}\\
        & \qquad \leq C t^\beta \rho(y)\chi_{\Gamma(x)}(y,t) \|f\|_{L^p(\R^n,\B)} \left(\int_{\rho(x)}^\infty \frac{r^{n-1}dr}{(t+|x-y|+r)^{(n+\beta+1)p'}}\right)^{1/p'} \\
        & \qquad \leq C \frac{ t^\beta \rho(y)\chi_{\Gamma(x)}(y,t)}{(t+|x-y|+\rho(x))^{n+\beta+1-n/p' }}\|f\|_{L^p(\R^n,\B)},
        \quad x,y\in \R^n\;\;\mbox{and}\;\; t>0.
    \end{align*}

    According to \cite[Proposition 1]{DGMTZ} we deduce that
    \begin{equation}\label{3.5}
        \rho(y)
            \leq C\rho(x) \left(1+\frac{t}{\rho(x)}\right)^{\gamma}, \quad |x-y| <t, \quad x,y\in \R^n\;\;\mbox{and}\;\; t>0,
    \end{equation}
    for some $ 1/2 \leq \gamma <1$.  Then,
    \begin{align*}
        &\int_{\R^n \times (0,\infty)}\|K^{\mathfrak{L}_V}_{glob}(f)(x;y,t)\|_\B^2\frac{dtdy}{t^{n+1}} \\
        & \qquad \leq C \rho(x)^2\|f\|^2_{L^p(\R^n,\B)}\int_0^\infty \int_{|x-y|<t} \frac{t^{2\beta-n-1}}{(t+|x-y|+\rho(x))^{2n+2\beta+2-2n/p'}}\left(1+\frac{t}{\rho(x)}\right)^{2\gamma}dydt \\
        & \qquad \leq C \rho(x)^2\|f\|^2_{L^p(\R^n,\B)}\int_0^\infty  \frac{t^{2\beta-1}}{(t+\rho(x))^{2n+2\beta+2-2n/p'}}\left(1+\frac{t}{\rho(x)}\right)^{2\gamma}dt \\
        & \qquad \leq C \rho(x)^{-2n/p} \|f\|^2_{L^p(\R^n,\B)}, \quad    x \in \R^n.
    \end{align*}
    Here we have used that $0 < \gamma <1$. Since $0<\rho(z)<\infty$,
    $z\in \R^n$ (\cite[p. 519]{Sh}), we conclude that
    $K^{\mathfrak{L}_V}_{glob}(f)(x;\cdot,\cdot)\in L^2\left(\R^n\times(0,\infty),\frac{dydt}{t^{n+1}};\B\right)$, $x\in \mathbb{R}^n$.

    To simplify we write $G(x)=K^{\mathfrak{L}_V}_{glob}(f)(x;\cdot,\cdot)$ and $G_j(x)=K^{\mathfrak{L}_V}_{glob}(f_j)(x;\cdot,\cdot)$, $x\in \R^n$
    and $j=1,\ldots,m$.
    We denote by $T_{G(x)}$ the linear and bounded operator  from
    $H$ into $\B$ defined by
    $$T_{G(x)}(h)
        = \int_{\R^n\times (0,\infty)}K^{\mathfrak{L}_V}_{glob}(f)(x;y,t)h(y,t)\frac{dydt}{t^{n+1}}, \quad h \in H.$$
    For every $j=1,\ldots,m$, $T_{G_j}$ is defined in a similar way.

    For every $S \in \B^*$, we have that
    $$ \langle S,T_{G(x)}(h)\rangle_{\B^*,\B}
        = \int_{\R^n\times (0,\infty )} \langle S, K^{\mathfrak{L}_V}_{glob}(f)(x;y,t)\rangle_{\B^*,\B} h(y,t) \frac{dydt}{t^{n+1}}, \quad h \in H.$$
    Moreover, it is clear that
    $$ T_{G(x)}
        = \sum_{j=1}^m b_jT_{G_j(x)}.$$
    Then, for every $h\in H$ and $S \in \B^*$ the function
    $$\begin{array}{lll}
        \R^n & \longrightarrow & \mathbb C \\
        x & \longmapsto &
            \displaystyle \langle S, T_{G(x)}(h)\rangle _{\B^*,\B}=\sum_{j=1}^n \langle  S,b_j\rangle_{\B^*,\B}T_{G_j(x)}(h)
    \end{array}$$
    is measurable. According to \cite[Lemma 2.5]{NVW} and Pettis measurability theorem
    (\cite[Proposition 2.1]{NVW}) the function
    $$\begin{array}{lll}
        \R^n & \longrightarrow & \gamma(H,\B)  \\
        x & \longmapsto & G(x) (\equiv T_{G(x)})
    \end{array}
    $$
    is strongly measurable.

    Suppose that $\{h_j\}_{j=1}^\infty $ is an orthonormal basis in $H$ and $\{\gamma_j\}_{j=1}^\infty$ is a sequence
    of independent standard normal variables on a probability space $(\Omega,\mathfrak F, P)$.
    We have that
\begin{align*}
\|G(x)\|_{\gamma (H,\mathbb{B})}=&\left(\mathbb{E}\Big\|\sum_{j=1}^\infty \gamma_jT_{G(x)}(h_j)\Big\|_\mathbb{B}^2\right)^{1/2}\\
&\hspace{-2cm}=\left(\mathbb{E}\left\|\sum_{j=1}^\infty \gamma _j\int_{\mathbb{R}^n\setminus B(x,\rho (x))}f(z)\int_{\mathbb{R}^n\times (0,\infty)}\chi _{\Gamma (x)}(y,t)t^\beta \partial_t^\beta P_t^{\mathfrak{L}_V}(y,z)h_j(y,t)\frac{dydt}{t^{n+1}}dz\right\|_\mathbb{B}^2\right)^{1/2},\quad x\in \mathbb{R}^n.
\end{align*}

The interchange of the order of integration is justified because by proceeding as above we can show that the
    integrals are norm convergent.

    Then, from (\ref{3.4}) and (\ref{3.5}) it follows that
    \begin{align*}
        \|G(x)\|_{\Rad}
        \leq & \int_{\R^n \setminus B(x,\rho(x))} \|f(z)\|_{\B}
            \Big(\mathbb{E} \Big|\sum_{j=1}^\infty\gamma_j
                \int_{\Gamma(x)}t^\beta\partial^\beta_t P_t^{\mathfrak L_V}(y,z)h_j(y,t)\frac{dydt}{t^{n+1}}\Big|^2\Big)^{1/2}dz \\
        \leq & C\int_{\R^n \setminus B(x,\rho(x))}  \|f(z)\|_{\B}
                \Big\|t^\beta\partial^\beta_tP_t^{\mathfrak L_V}(y,z)\chi_{\Gamma(x)}(y,t)\Big\|_{H}dz \\
        \leq & C \int_{\R^n \setminus B(x,\rho(x))} \|f(z)\|_{\B}
            \Big(\int_{\Gamma(x)} \frac{\rho(x)^2 \Big(1+t/\rho(x)\Big)^{2\gamma}t^{2\beta-n-1}}{(t+|y-z|)^{2(n+\beta+1)}}dtdy\Big)^{1/2}dz\\
        \leq & C \rho(x) \int_{\R^n \setminus B(x,\rho(x))} \|f(z)\|_{\B}
            \Big(\int_{\Gamma(x)} \frac{\Big(1+t/\rho(x)\Big)^{2\gamma}t^{2\beta-n-1}}{(t+2|x-y|+|y-z|)^{2(n+\beta+1)}}dtdy\Big)^{1/2}dz\\
        \leq & C \rho(x)\int_{\R^n \setminus B(x,\rho(x))} \|f(z)\|_{\B}
            \Big(\int_{0}^\infty \frac{\Big(1+t/\rho(x)\Big)^{2\gamma}t^{2\beta-1}}{(t+|x-z|)^{2(n+\beta+1)}}dt\Big)^{1/2}dz\\
        \leq & C\rho(x) \int_{\R^n \setminus B(x,\rho(x))} \|f(z)\|_{\B}
            \Big(\int_0^{\rho(x)} \frac{t^{2\beta-1}}{(t+|x-z|)^{2(n+\beta+1)}}dt \\
        & + \int_{\rho(x)}^\infty \frac{t^{2\gamma+ 2\beta-1}}{(t+|x-z|)^{2(n+\beta+1)}}\frac{dt}{\rho(x)^{2\gamma}}\Big)^{1/2}dz \\
        \leq & C \rho(x)\int_{\R^n \setminus B(x,\rho(x))}\|f(z)\|_{\B}
            \Big(\frac{\rho(x)^{2\beta}}{|x-z|^{2(n+\beta+1)}}+ \frac{1}{\rho(x)^{2\gamma}(\rho(x)+|x-z|)^{2n+2-2\gamma}}\Big)^{1/2}dz\\
        \leq & C \Big(\rho(x)^{\beta+1} \int_{\R^n \setminus B(x,\rho(x))} \frac{\|f(z)\|_{\B}}{|x-z|^{n+\beta+1}} dz
            + \rho(x)^{1-\gamma}\int_{\R^n \setminus B(x,\rho(x))} \frac{\|f(z)\|_{\B}}{|x-z|^{n+1-\gamma}} dz \Big)\\
        \leq & C \Big(\rho(x)^{\beta+1}\sum_{k=0}^\infty\int_{2^k\rho(x)\leq |x-z|<2^{k+1}\rho(x)} \frac{\|f(z)\|_{\B}}{|x-z|^{n+\beta+1}} dz\\
        & + \rho(x)^{1-\gamma} \sum_{k=0}^\infty\int_{2^k\rho(x)\leq |x-z|<2^{k+1}\rho(x)} \frac{\|f(z)\|_{\B}}{|x-z|^{n+1-\gamma}}dz\Big)\\
        \leq & C \Big(\sum_{k=0}^\infty \frac{\rho(x)^{\beta+1}}{(2^k\rho(x))^{n+\beta+1}} \int_{B(x,2^{k+1}\rho(x))} \|f(z)\|_{\B} dz\\
        & + \sum_{k=0}^\infty \frac{\rho(x)^{1-\gamma}}{(2^k\rho(x))^{n+1-\gamma}}\int_{B(x,2^{k+1}\rho(x))} \|f(z)\|_{\B} dz \Big)\\
        \leq & C \mathcal{M}(\|f\|_{\B})(x), \quad\; x\in \R^n,
    \end{align*}
    where $\mathcal M$ denotes the Hardy-Littlewood maximal operator.
    Note that we have used that $1/2 \leq \gamma < 1$.

    As it is well-known the maximal operator $\mathcal M$ is bounded from $L^p(\R^n)$ into itself. Then
    \begin{equation*}\label{M1}
        \|K^{\mathfrak{L}_V}_{glob}(f)\|_{L^p(\R^n,\gamma(H,\B))}
            \leq C\|f\|_{L^p(\R^n,\B)},
    \end{equation*}
    where $C>0$ does not depend on $f$.
\end{proof}

\begin{lema}\label{Lem3.2}
    Let $\B$ be a Banach space and $1<p<\infty$. Then,
    $$\| D^{\mathfrak{L}_V}(f) \|_{L^p(\R^n,\gamma(H,\B))}
        \leq C \|f\|_{L^p(\R^n,\B)}, \quad f \in C_c^\infty(\R^n) \otimes \B.$$
\end{lema}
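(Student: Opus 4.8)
The plan is to mimic the proof of Lemma~\ref{Lem3.1}, replacing the pointwise bound \eqref{3.4} for the Poisson kernel of $\mathfrak{L}_V$ by a bound for the \emph{difference kernel}
$$\mathcal{D}_t(y,z):=t^\beta\partial_t^\beta\bigl(P_t^{\mathfrak{L}_V}(y,z)-P_t(y,z)\bigr),\qquad y,z\in\R^n,\ t>0,$$
which carries a smallness gain on the local scale. Writing $f=\sum_{j=1}^m b_jf_j$ with $b_j\in\B$ and $f_j\in C_c^\infty(\R^n)$, we have
$$D^{\mathfrak{L}_V}(f)(x;y,t)=\chi_{\Gamma(x)}(y,t)\int_{B(x,\rho(x))}\mathcal{D}_t(y,z)f(z)\,dz,\qquad x,y\in\R^n,\ t>0,$$
so for each $x$ the operator $D^{\mathfrak{L}_V}(f)(x;\cdot,\cdot)$ has rank at most $m$, hence belongs to $\gamma(H,\B)$ as soon as the scalar functions $\chi_{\Gamma(x)}(\cdot,\cdot)\,\mathcal{D}_{(\cdot)}(\cdot,z)$ lie in $H$ (which will follow from the estimates below). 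As in Lemma~\ref{Lem3.1}, \cite[Lemma 2.5]{NVW} together with the Pettis measurability theorem (\cite[Proposition 2.1]{NVW}) then shows that $x\mapsto D^{\mathfrak{L}_V}(f)(x;\cdot,\cdot)$ is strongly measurable as a $\gamma(H,\B)$-valued function. Since $\|b\,\varphi\|_{\gamma(H,\B)}=\|b\|_\B\|\varphi\|_H$ for $b\in\B$, $\varphi\in H$, the triangle inequality gives
$$\bigl\|D^{\mathfrak{L}_V}(f)(x;\cdot,\cdot)\bigr\|_{\gamma(H,\B)}\leq\int_{B(x,\rho(x))}\|f(z)\|_\B\,\bigl\|\chi_{\Gamma(x)}(\cdot,\cdot)\,\mathcal{D}_{(\cdot)}(\cdot,z)\bigr\|_H\,dz,\qquad x\in\R^n.$$

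The key ingredient is the following pointwise estimate: there is $\delta\in(0,1]$, depending only on $n$ and on the reverse--H\"older exponent $s$, such that
$$|\mathcal{D}_t(y,z)|\leq C\Bigl(\frac{t+|y-z|}{\rho(y)}\Bigr)^{\delta}\frac{t^\beta}{(t+|y-z|)^{n+\beta}},\qquad y,z\in\R^n,\ t>0.$$
To prove it I would use the subordination formula to write
$$\mathcal{D}_t(y,z)=\frac{t^\beta}{\sqrt{4\pi}}\int_0^\infty\frac{\partial_t^\beta[te^{-t^2/4u}]}{u^{3/2}}\bigl(W_u^{\mathfrak{L}_V}(y,z)-W_u(y,z)\bigr)\,du,$$
where $W_u(y,z)=(4\pi u)^{-n/2}e^{-|y-z|^2/4u}$ is the classical heat kernel, the differentiation under the integral sign being justified by \eqref{3.3} exactly as in Lemma~\ref{Lem3.1} (and, for $P_t$, as in Section~\ref{sec:Laplacian}). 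One then inserts the heat kernel perturbation estimate
$$|W_u^{\mathfrak{L}_V}(y,z)-W_u(y,z)|\leq C\Bigl(\frac{\sqrt u+|y-z|}{\rho(y)}\Bigr)^{\delta}\frac{e^{-c|y-z|^2/u}}{u^{n/2}},\qquad y,z\in\R^n,\ u>0,$$
which follows from the Duhamel/Feynman--Kac perturbation formula for $W_u^{\mathfrak{L}_V}$ together with the reverse--H\"older property \eqref{1.2} of $V$ and the basic properties of $\rho$, as in \cite{DGMTZ} and \cite{Sh}. Combining this with \eqref{3.3} and computing the $u$-integral, with the usual splitting of the range of integration at $u=t^2+|y-z|^2$, yields the displayed bound for $\mathcal{D}_t(y,z)$. \emph{Establishing this difference estimate is the main obstacle of the proof}; everything else is a variant of the argument in Lemma~\ref{Lem3.1}.

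Granted the pointwise estimate, the proof is completed as follows. Fix $x\in\R^n$ and $z\in B(x,\rho(x))$. If $(y,t)\in\Gamma(x)$ then $|x-y|<t$, so $t+|y-z|\sim t+|x-z|$; moreover \eqref{3.5} (in its two-sided form, see \cite[Proposition 1]{DGMTZ}) gives $\rho(y)^{-1}\leq C\rho(x)^{-1}(1+t/\rho(x))^{\gamma}$ with $1/2\leq\gamma<1$, and $|\{y\in\R^n:|x-y|<t\}|\sim t^n$. Substituting these into the definition of the $H$-norm and choosing $\delta$ small enough that $\delta(1+\gamma)<n$, one obtains
$$\bigl\|\chi_{\Gamma(x)}(\cdot,\cdot)\,\mathcal{D}_{(\cdot)}(\cdot,z)\bigr\|_H^2\leq\frac{C}{\rho(x)^{2\delta}}\int_0^\infty\frac{t^{2\beta-1}}{(t+|x-z|)^{2n+2\beta-2\delta}}\Bigl(1+\frac{t}{\rho(x)}\Bigr)^{2\gamma\delta}\,dt\leq\frac{C}{|x-z|^{2n}}\Bigl(\frac{|x-z|}{\rho(x)}\Bigr)^{2\delta},$$
the contribution of $t\gtrsim\rho(x)$ being finite precisely because $\delta(1+\gamma)<n$. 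Inserting this into the inequality of the first paragraph and decomposing $B(x,\rho(x))$ into the dyadic annuli $\{2^{-k-1}\rho(x)\leq|x-z|<2^{-k}\rho(x)\}$, $k\geq0$, we get
$$\bigl\|D^{\mathfrak{L}_V}(f)(x;\cdot,\cdot)\bigr\|_{\gamma(H,\B)}\leq C\sum_{k=0}^\infty\frac{2^{-k\delta}}{(2^{-k}\rho(x))^n}\int_{B(x,2^{-k}\rho(x))}\|f(z)\|_\B\,dz\leq C\,\mathcal{M}(\|f\|_\B)(x),\qquad x\in\R^n,$$
where $\mathcal{M}$ is the Hardy--Littlewood maximal operator. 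Taking $L^p(\R^n)$-norms and using the boundedness of $\mathcal{M}$ on $L^p(\R^n)$ gives the claim.
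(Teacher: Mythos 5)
Your argument is correct, and it reaches the conclusion by a genuinely different organization than the paper. The paper never forms a single pointwise bound for the difference kernel: it keeps the subordination integral and splits it at $u=\rho(z)^2$, uses the Kato--Trotter (Feynman--Kac perturbation) estimate only in the purely local regime $|x-z|<\rho(x)$, $|y-z|\leq 2\rho(x)$ (where $\rho(x)\sim\rho(y)\sim\rho(z)$, giving \eqref{3.6}--\eqref{3.7}), handles the complementary pieces $D_2^{\mathfrak{L}_V}$ and $D_{1,2}^{\mathfrak{L}_V}$ by the crude bounds \eqref{3.9}--\eqref{3.10}, and ends up dominating $\|D^{\mathfrak{L}_V}(f)(x;\cdot,\cdot)\|_{H\otimes\B}$ by $\mathcal{M}(\|f\|_\B)(x)+W_*(\|f\|_\B)(x)$, invoking the $L^p$-boundedness of both the Hardy--Littlewood and the heat maximal operators. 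You instead package all of this into one global estimate $|t^\beta\partial_t^\beta(P_t^{\mathfrak{L}_V}-P_t)(y,z)|\leq C\bigl((t+|y-z|)/\rho(y)\bigr)^\delta t^\beta(t+|y-z|)^{-n-\beta}$, which is the subordinated form of the standard Dziuba\'nski--Zienkiewicz heat-kernel perturbation bound; then a single cone-integral computation (using the two-sided comparison of $\rho(y)$ with $\rho(x)$ on $\Gamma(x)$ and $\delta(1+\gamma)<n$) plus dyadic annuli gives control by $\mathcal{M}$ alone. What your route buys is a cleaner endgame, with no case splitting in $u$ or $z$ and no appearance of $W_*$; what it costs is that the whole difficulty is front-loaded into the global difference estimate, which you only sketch. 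That estimate is true and provable from exactly the ingredients you cite: where $t+|y-z|\geq\rho(y)$ the factor exceeds $1$ and the bound is the trivial Gaussian/Feynman--Kac one (this also legitimizes ``choosing $\delta$ smaller'', which is not monotone in $\delta$ otherwise), and where $t+|y-z|<\rho(y)$ it is the Kato--Trotter computation that the paper itself performs to get \eqref{3.6}, combined with $\rho(z)\sim\rho(y)$ from \cite[Lemma 1.4]{Sh}; so your ``main obstacle'' is real but is essentially the paper's local estimate recycled with the off-local regimes absorbed by the crude bound. The measurability and the rank-$m$/triangle-inequality reduction of the $\gamma(H,\B)$-norm to a scalar $H$-norm integral are handled exactly as in Lemma~\ref{Lem3.1}, as you say.
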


\begin{proof}
    Let $f =\sum_{j=1}^m b_j f_j$, where $b_j \in \B$ and $f_j \in C_c^\infty(\R^n)$, $j=1,\dots,m$.
    We can write
    $$ \partial_t^\beta(P_t^{\mathcal{L}_V}(y,z)-P_t(y,z))
        =\frac{1}{\sqrt{4\pi}}\int_0^\infty  \frac{\partial_t^\beta[te^{-t^2/4u}]}{u^{3/2}}\left(W_u^{\mathcal L_V}(y,z)-W_u(y-z)\right)du,$$
    where
    $$ W_u(y)
        = \frac{e^{-|y|^2/4u}}{(4\pi u)^{n/2}}, \quad y\in \R^n\;\;\mbox{and}\;\;u>0.$$
    According to Kato-Trotter's formula \cite[(2.10)]{DZ2} we have that
    \begin{align*}
        W_u(y-z)- W_u^{\mathcal L_V}(y,z)
            =& \int_0^u\int_{\R^n} W_s(y-v)V(v) W_{u-s}^{\mathcal L_V}(v,z)dvds\\
            =& \int_0^{u/2}\int_{\R^n} W_s(y-v)V(v) W_{u-s}^{\mathcal L_V}(v,z)dvds\\
            &+ \int_0^{u/2}\int_{\R^n} W_{u-s}(y-v)V(v) W_u^{\mathcal L_V}(v,z)dvds, \quad y,z\in \R^n\;\;\mbox{and}\;\;u>0.
    \end{align*}
    From \cite[(2.2)]{DGMTZ} and \cite[(2.8)]{DGMTZ} we infer that
    \begin{align*}
        & \left|\int_0^{u/2}\int_{\R^n} W_s(y-v)V(v) W_{u-s}^{\mathcal L_V}(v,z)dvds\right|
            \leq \int_0^{u/2}\int_{\R^n} \frac{e^{-c|y-v|^2/s}}{s^{n/2}}V(v)\frac{e^{-c|v-z|^2/u}}{(u-s)^{n/2}} dvds \\
        & \qquad \leq C\int_0^{u/2}\int_{\R^n} \frac{e^{-c(|y-v|^2+|v-z|^2)/u}}{(su)^{n/2}}V(v)e^{-c|y-v|^2/s} dvds \\
        & \qquad \leq C  \frac{e^{-c|y-z|^2/u}}{u^{n/2}}\int_0^{u/2}\int_{\R^n}V(v)\frac{e^{-c|y-v|^2/s}}{s^{n/2}} dvds
        \leq C\frac{e^{-c|y-z|^2/u}}{u^{n/2}}\int_0^{u/2}\frac{1}{s}\left(\frac{s}{\rho(y)^2}\right)^\delta ds \\
        & \qquad = C\frac{e^{-c|y-z|^2/u}}{u^{-\delta + n/2}\rho(y)^{2\delta}}, \quad0<u<\rho(y)^2, \quad y,z\in \R^n,
    \end{align*}
    for a certain $\delta >0$.
    Also, we get
    $$ \left|\int_0^{u/2}\int_{\R^n} W_{u-s}(y-v)V(v) W_s^{\mathcal L_V}(v,z)dvds\right|
        \leq C\frac{e^{-c|y-z|^2/u}}{u^{-\delta + n/2}\rho(z)^{2\delta}}, \quad0<u<\rho(z)^2, \quad y,z\in \R^n.$$
       Then, according to \cite[Lemma 1.4, (a)]{Sh} if $|x-z|<\rho(x)$ and $|y-z| < 2\rho(x)$, then
    $\rho(y) \sim \rho(x) \sim \rho(z)$. Hence, we conclude that
    \begin{equation}\label{3.6}
        |W_u(y-z) -W_u^{\mathcal L_V}(y,z)|
            \leq C\frac{e^{-c|y-z|^2/u}}{u^{-\delta + n/2}\rho(z)^{2\delta}},
    \end{equation}
    for $0<u<\rho(z)^2$ and  $y,z\in \R^n$ such that $|y-z| \leq 2\rho(x)$ and $|x-z| < \rho(x)$.
    Estimates (\ref{3.3}) and (\ref{3.6}) lead to
    \begin{equation}\label{3.7}
        \Big|\int_0^{\rho(z)^2} \frac{\partial_t^\beta[te^{-t^2/4u}]}{u^{3/2}}
            \Big(W_u^{\mathcal L_V}(y,z)-W_u(y-z)\Big)du \Big|
            \leq \frac{C}{\rho(x)^{2\delta}} \int_0^{\rho(z)^2} \frac{e^{-c(t^2+|y-z|^2)/u}}{u^{1-\delta+(n+\beta)/2}}du,
    \end{equation}
    for $y,z\in \R^n$ such that $|y-z|\leq 2\rho(x)$ and $|x-z| < \rho(x)$.

    On the other hand, \cite[(2.2)]{DGMTZ} and (\ref{3.3}) imply that
    \begin{equation}\label{3.9}
        \Big|\int^\infty_{\rho(z)^2} \frac{\partial_t^\beta[te^{-t^2/4u}]}{u^{3/2}}
            \Big(W_u^{\mathcal L_V}(y,z)-W_u(y-z)\Big)du \Big|
            \leq C\int^\infty_{\rho(z)^2} \frac{e^{-c(t^2+|y-z|^2)/u}}{u^{1+(n+\beta)/2}}du, \quad y,z\in \R^n,\;t>0,
    \end{equation}
    and also
    \begin{equation}\label{3.10}
        \Big|\int_0^{\rho(z)^2} \frac{\partial_t^\beta[te^{-t^2/4u}]}{u^{3/2}}
            \Big(W_u^{\mathcal L_V}(y,z)-W_u(y-z)\Big)du \Big|
            \leq C\frac{1}{(t+|y-z|)^{n+\beta}}, \quad y,z\in \R^n\text{ and } t>0.
    \end{equation}

    We are going to see that $ D^{\mathfrak{L}_V}(f)(x;\cdot,\cdot) \in H$, for every $x\in \R^n$.
    Fix $x \in \R^n$. We can write
    \begin{align*}
        D^{\mathfrak{L}_V}(f)(x;y,t)
            = & \chi_{\Gamma(x)}(y,t)\int_{B(x,\rho(x))}f(z)\frac{t^\beta}{\sqrt{4\pi}}\int_0^{\rho(z)^2}
                \frac{\partial_t^\beta[te^{-t^2/4u}]}{u^{3/2}} \Big(W_u^{\mathcal L_V}(y,z)-W_u(y-z)\Big)du dz\\
              & + \chi_{\Gamma(x)}(y,t)\int_{B(x,\rho(x))}f(z)\frac{t^\beta}{\sqrt{4\pi}}\int^\infty_{\rho(z)^2}
                \frac{\partial_t^\beta[te^{-t^2/4u}]}{u^{3/2}} \Big(W_u^{\mathcal L_V}(y,z)-W_u(y-z)\Big)du dz\\
            = & D^{\mathfrak{L}_V}_1(f)(x;y,t) + D^{\mathfrak{L}_V}_2(f)(x;y,t), \quad y\in \R^n\;\; \mbox{and}\;\;t>0.
    \end{align*}
    Minkowski's inequality and (\ref{3.9}) leads to
    \begin{align*}
        &\|D^{\mathfrak{L}_V}_2(f)(x;\cdot,\cdot)\|_{L^2\Big( \R^n \times (0,\infty), \frac{dydt}{t^{n+1}};\B\Big)}\\
        & \qquad \leq C \int_{B(x,\rho(x))}\|f(z)\|_{\B}\Big\|t^\beta \int^\infty_{\rho(z)^2}
            \frac{\partial_t^\beta[te^{-t^2/4u}]}{u^{3/2}} \Big(W_u^{\mathcal L_V}(y,z)-W_u(y-z)\Big)du\Big\|_{H} dz\\
        &\qquad \leq C\int_{B(x,\rho(x))} \|f(z)\|_{\B}  \int_{\rho(z)^2}^\infty\frac{1}{u^{1 + (n+\beta)/2}}
            \Big(\int_0^\infty\int_{|x-y|<t}e^{-c(t^2+|y-z|^2)/u}\frac{dydt}{t^{-2\beta+n+1}}\Big)^{1/2}dudz\\
        &\qquad \leq C\int_{B(x,\rho(x))}\|f(z)\|_{\B}\int_{\rho(z)^2}^\infty\frac{1}{u^{1+(n+\beta)/2}}
            \Big(\int_0^\infty e^{-c t^2 / u}\frac{dt}{t^{1-2\beta}}\Big)^{1/2}dudz\\
        &\qquad \leq C\int_{B(x,\rho(x))} \|f(z)\|_{\B}  \int_{\rho(z)^2}^\infty\frac{1}{u^{1+n/2}}dudz
        \leq \frac{C}{\rho(x)^n}\int_{B(x,\rho(x))} \|f(z)\|_{\B} dz
        \leq C \mathcal M(\|f\|_{\B})(x).
    \end{align*}
    We have taken into account that $\rho(z) \sim \rho(x)$ because $|x-z| < \rho(x)$.

    We now decompose $D^{\mathfrak{L}_V}_1(f)$ as follows
    $$ D^{\mathfrak{L}_V}_1(f)
        =D^{\mathfrak{L}_V}_{1,1}(f)+D^{\mathfrak{L}_V}_{1,2}(f),$$
    where
    $$D^{\mathfrak{L}_V}_{1,1}(f)(x;y,t)=D^{\mathfrak{L}_V}_{1}(f\chi_{B(y,2\rho(x))})(x;y,t), \quad y\in \R^n, \ t>0.$$
    By using again the Minkowski's inequality and (\ref{3.10}) we get
    \begin{align*}
        & \|D^{\mathfrak{L}_V}_{1,2}(f)(x;\cdot,\cdot)\|_{L^2\Big( \R^n \times (0,\infty), \frac{dydt}{t^{n+1}};\B\Big)} \\
        & \qquad \leq C\int_{B(x,\rho(x))} \|f(z)\|_{\B} \left(\int_0^\infty\int_{|x-y|<t}\frac{t^{2\beta-n-1}}{(t+|x-y|)^{2n+2\beta}}\chi_{\R^n \setminus B(y,2\rho(x))}(z)dydt\right)^{1/2}dz\\
        & \qquad \leq C\int_{B(x,\rho(x))}\|f(z)\|_{\B}\left(\int_0^\infty \frac{t^{2\beta-1}}{(t+\rho(x))^{2n+2\beta}}dt\right)^{1/2}dz
            \leq \frac{C}{\rho(x)^{n}}\int_{B(x,\rho(x))} \|f(z)\|_{\B} dz \\
        & \qquad \leq C\mathcal M(\|f\|_{\B})(x).
    \end{align*}
    Now, estimations \eqref{3.3} and (\ref{3.7}) imply that
    \begin{align*}
        & \|D^{\mathfrak{L}_V}_{1,1}(f)(x;\cdot,\cdot)\|_{L^2\Big( \R^n \times (0,\infty), \frac{dydt}{t^{n+1}};\B\Big)}\\
        & \qquad \leq C\bigg(\int_0^\infty\int_{|x-y|<t}\bigg(\int_{B(x,\rho(x)) \cap B(y,2\rho(x))} \|f(z)\|_{\B} \\
        & \qquad \qquad \times \Big|t^\beta\int_0^{\rho(z)^2} \frac{\partial_t^\beta[te^{-t^2/4u}]}{u^{3/2}}
            \Big(W_u^{\mathcal L_V}(y,z)-W_u(y-z)\Big)du\Big|dz\bigg)^2 \frac{dydt}{t^{n+1}}\bigg)^{1/2} \\
        & \qquad \leq C\bigg(\int_0^\infty\int_{|x-y|<t}\bigg(\int_{\R^n} \|f(z)\|_{\B}
            \frac{t^\beta}{\rho(x)^{2\delta}}\int_0^{c\rho(x)^2}
            \frac{e^{-c(t^2+|y-z|^2)/u}}{u^{1-\delta+(n+\beta)/2}}du dz\bigg)^2 \frac{dydt}{t^{n+1}}\bigg)^{1/2} \\
        & \qquad \leq C\bigg(\int_0^\infty\int_{|x-y|<t}\bigg(\int_{\R^n} \|f(z)\|_{\B}
            \frac{t^\beta}{\rho(x)^{2\delta}}\int_0^{c\rho(x)^2}
            \frac{e^{-c(t^2+|x-y|^2+|y-z|^2)/u}}{u^{1-\delta+(n+\beta)/2}}du dz\bigg)^2 \frac{dydt}{t^{n+1}}\bigg)^{1/2} \\
        & \qquad \leq C\bigg(\int_0^\infty\int_{|x-y|<t}\bigg(\int_{\R^n} \|f(z)\|_{\B}
            \frac{t^\beta}{\rho(x)^{2\delta}}\int_0^{c\rho(x)^2}
            \frac{e^{-c(t^2+|x-z|^2)/u}}{u^{1-\delta+(n+\beta)/2}}du dz\bigg)^2 \frac{dydt}{t^{n+1}}\bigg)^{1/2} \\
        & \qquad \leq C\bigg(\int_0^\infty\int_{|x-y|<t} \frac{t^{2\beta}}{\rho(x)^{4\delta}}
            \bigg(\int_0^{c\rho(x)^2} \frac{e^{-c t^2 / u}}{u^{1-\delta+(n+\beta)/2}}
            \int_{\R^n} e^{-c|x-z|^2/u} \|f(z)\|_{\B} dz du\bigg)^2 \frac{dydt}{t^{n+1}}\bigg)^{1/2} \\
        & \qquad \leq C\sup_{u>0}\int_{\R^n}\frac{e^{-c|x-z|^2/u}}{u^{n/2}} \|f(z)\|_{\B} dz
            \int_0^{c\rho(x)^2}\frac{u^{\delta-1-\beta/2}}{\rho(x)^{2\delta}}
            \bigg(\int_0^\infty\int_{|x-y|<t}t^{2\beta-n-1}e^{-c t^2 / u}dydt\bigg)^{1/2}du \\
        & \qquad \leq C\sup_{u>0}\int_{\R^n}\frac{e^{-c|x-z|^2/u}}{u^{n/2}} \|f(z)\|_{\B} dz
            \int_0^{c\rho(x)^2}\frac{u^{\delta-1-\beta/2}}{\rho(x)^{2\delta}}
            \bigg(\int_0^\infty t^{2\beta-1}e^{-c t^2 / u}dt\bigg)^{1/2}du \\
        & \qquad \leq C\sup_{u>0}\int_{\R^n}\frac{e^{-c|x-z|^2/u}}{u^{n/2}} \|f(z)\|_{\B} dz
            \int_0^{c\rho(x)^2}\frac{u^{\delta-1}}{\rho(x)^{2\delta}}du
        \leq C W_*(\|f\|_{\B})(x),
    \end{align*}
    where $W_*$ represents the maximal operator associated with the heat semigroup $\{W_t\}_{t>0}$ defined by
    $$ W_*(g)
        = \sup_{t>0}|W_t(g)|, \quad g \in L^q(\R^n), \quad 1 \leq q \leq \infty.$$
    We conclude that
    $$\|D^{\mathfrak{L}_V}(f)(x;\cdot,\cdot)\|_{L^2\Big( \R^n \times (0,\infty), \frac{dydt}{t^{n+1}};\B\Big)}
        \leq C\Big(W_*(\|f\|_{\B})(x)+ \mathcal M(\|f\|_{\B})(x)\Big).$$
    Also from the above estimations we get
    $$ \|D^{\mathfrak{L}_V}(f)(x;\cdot,\cdot)\|_{L^2\Big( \R^n \times (0,\infty), \frac{dydt}{t^{n+1}};\B\Big)}
        \leq C\|f\|_{L^\infty(\R^n,\B)}, \quad x\in\R^n.$$
    We can now proceed as in the study of $K^{\mathfrak{L}_V}_{glob}$ to obtain that
    \begin{equation*}\label{M2}
        \|D^{\mathfrak{L}_V}(f)\|_{L^p(\R^n,\gamma(H,\B))}
            \leq C\|f\|_{L^p(\R^n,\B)},
    \end{equation*}
    where $C$ does not depend on $f$, because $\mathcal M$ and $W_*$ are bounded operators from $L^p(\R^n)$ into itself.
\end{proof}

\begin{lema}\label{Lem3.3}
    Let $\B$ be a UMD Banach space and $1<p<\infty$. Then,
    $$\| \mathbb{K}_{loc}(f) \|_{L^p(\R^n,\gamma(H,\B))}
        \leq C \|f\|_{L^p(\R^n,\B)}, \quad f \in C_c^\infty(\R^n) \otimes \B.$$
\end{lema}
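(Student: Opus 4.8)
The plan is to reduce everything to the \emph{non-localized} conical operator, which is already under control: unravelling the definitions, Lemma~\ref{Lem2.1} says precisely that
$$\|\mathbb{K}(g)\|_{L^p(\R^n,\gamma(H,\B))}\le C\|g\|_{L^p(\R^n,\B)},\qquad g\in C_c^\infty(\R^n)\otimes\B,$$
since $J(t^\beta\partial_t^\beta P_t(g))=\mathbb{K}(g)$. Thus the task is to control the difference between $\mathbb{K}_{loc}$ and $\mathbb{K}$. The first thing to observe is that one may \emph{not} simply write $\mathbb{K}_{loc}(f)=\mathbb{K}(f)-\mathbb{K}(f\chi_{\R^n\setminus B(\cdot,\rho(\cdot))})$ and estimate the last term directly. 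Indeed, by \eqref{2.3} one has $|t^\beta\partial_t^\beta P_t(y-z)|\le Ct^\beta(t+|y-z|)^{-n-\beta}$, and since $t+|y-z|\ge\tfrac12(t+|x-z|)$ when $(y,t)\in\Gamma(x)$, integrating over the cone gives $\|\chi_{\Gamma(x)}(y,t)t^\beta\partial_t^\beta P_t(y-z)\|_H\le C|x-z|^{-n}$; but $\int_{|x-z|\ge\rho(x)}\|f(z)\|_\B|x-z|^{-n}\,dz$ is not bounded on $L^p(\R^n)$. This is the main obstacle, and it is overcome by localizing through the critical-radius covering, so that the far region reduces to a single $\rho$-annulus.

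Concretely, I would fix a sequence $\{x_j\}_j\subset\R^n$ with $\R^n=\bigcup_j B(x_j,\rho(x_j))$ and $\sum_j\chi_{B(x_j,\sigma\rho(x_j))}\le C\sigma^N$ for all $\sigma\ge1$ (\cite{Sh}), recalling also that $\rho(x)\sim\rho(y)$ whenever $|x-y|\le C\rho(x)$ (\cite[Lemma 1.4]{Sh}). Put $Q_1=B(x_1,\rho(x_1))$ and $Q_j=B(x_j,\rho(x_j))\setminus\bigcup_{i<j}B(x_i,\rho(x_i))$, so that $\{Q_j\}_j$ is a measurable partition of $\R^n$ with $Q_j\subset B(x_j,\rho(x_j))$, and let $B_j^*=B(x_j,C_0\rho(x_j))$ with $C_0$ chosen large (depending only on the constant in \cite[Lemma 1.4]{Sh}) so that for every $x\in Q_j$ one has $B(x,\rho(x))\subset B_j^*$ and $B_j^*\setminus B(x,\rho(x))\subset\{z:\rho(x)\le|x-z|\le C_1\rho(x)\}$. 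Then, by linearity, for $x\in Q_j$,
$$\mathbb{K}_{loc}(f)(x;y,t)=\mathbb{K}(f\chi_{B_j^*})(x;y,t)-\mathbb{K}\big(f\chi_{B_j^*\setminus B(x,\rho(x))}\big)(x;y,t),\qquad y\in\R^n,\ t>0.$$

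It then remains to estimate the two pieces. For the first, summing over $j$ and using Lemma~\ref{Lem2.1} together with the bounded overlap of $\{B_j^*\}_j$,
$$\sum_j\int_{Q_j}\|\mathbb{K}(f\chi_{B_j^*})(x;\cdot,\cdot)\|_{\gamma(H,\B)}^p\,dx\le\sum_j\|\mathbb{K}(f\chi_{B_j^*})\|_{L^p(\R^n,\gamma(H,\B))}^p\le C\sum_j\|f\chi_{B_j^*}\|_{L^p(\R^n,\B)}^p\le C\|f\|_{L^p(\R^n,\B)}^p.$$
For the second, since $z\in B_j^*\setminus B(x,\rho(x))$ forces $|x-z|\sim\rho(x)$, Minkowski's integral inequality for the $\gamma$-norm (using $\|h\otimes b\|_{\gamma(H,\B)}=\|h\|_H\|b\|_\B$) together with the $H$-norm bound above yield, for $x\in Q_j$,
$$\big\|\mathbb{K}\big(f\chi_{B_j^*\setminus B(x,\rho(x))}\big)(x;\cdot,\cdot)\big\|_{\gamma(H,\B)}\le C\int_{\rho(x)\le|x-z|\le C_1\rho(x)}\frac{\|f(z)\|_\B}{|x-z|^n}\,dz\le C\,\mathcal{M}(\|f\|_\B)(x),$$
where $\mathcal{M}$ is the Hardy--Littlewood maximal operator; summing over the partition $\{Q_j\}_j$ and invoking the $L^p(\R^n)$-boundedness of $\mathcal{M}$ concludes. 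The measurability and Bochner integrability needed for Minkowski's inequality are immediate, since $f\in C_c^\infty(\R^n)\otimes\B$ has finite-dimensional range and the kernel bounds are locally uniform. The delicate point is the localization step; everything else repeats arguments already used for $K^{\mathfrak{L}_V}_{glob}$ and $D^{\mathfrak{L}_V}$.
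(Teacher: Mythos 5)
Your proposal is correct and follows essentially the same route as the paper: localize through the critical-radius covering $\{B(x_j,\rho(x_j))\}_j$ of \cite{Sh}, \cite{DZ1}, control the localized main term by Lemma~\ref{Lem2.1} together with the bounded overlap of the enlarged balls, and reduce the discrepancy term to an annulus $|x-z|\sim\rho(x)$ where the $H$-norm bound $\|t^\beta\partial_t^\beta P_t(y-z)\chi_{\Gamma(x)}(y,t)\|_H\leq C|x-z|^{-n}$ and the Hardy--Littlewood maximal function finish the argument. The only differences (a disjointified partition and a one-sided containment $B(x,\rho(x))\subset B_j^*$ instead of the paper's symmetric-difference term with $Q_k^*=B(x_k,2\rho(x_k))$) are bookkeeping, not a different method.
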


\begin{proof}
    Let $f =\sum_{j=1}^m b_j f_j$, where $b_j \in \B$ and $f_j \in C_c^\infty(\R^n)$, $j=1,\dots,m$.
    We are going to use the ideas developed in the proof of \cite[Theorem 3.7]{AST}.

    Our first goal is to see that
    $\mathbb K_{loc}(f)(x;\cdot,\cdot)\in L^2\left(\R^n\times (0,\infty),\frac{dydt}{t^{n+1}};\B\right)$,
    for every $x \in \R^n$.
    According to \cite[Lemma 2.3]{DZ1} we consider a sequence $(x_k)_{k=1}^\infty$ in $\R^n$
    such that if $Q_k=B(x_k,\rho(x_k))$, $k \in \mathbb N$, we have that
    \begin{itemize}
        \item[$(i)$] $\displaystyle \bigcup_{k=1}^\infty Q_k=\R^n$,
        \item[$(ii)$] There exists $N=N(\rho)$ such that, for every $k \in \mathbb N$, card$\{j\in \mathbb N: Q_j^{**}\cap Q_k^{**} \not= \emptyset\} \leq N$, where $Q_\ell^{**}= B(x_\ell,4\rho(x_\ell))$, $\ell \in \mathbb N$.
    \end{itemize}
    Fix $x \in \R^n$. We define the operators
    $$ S(f)(x;y,t)
        = \sum_{k=1}^\infty \chi_{Q_k}(x)\mathbb K(f\chi_{Q_k^*})(x;y,t), \quad y \in \R^n\;\;\mbox{and}\;\; t>0,$$
    where $Q_k^*=B(x_k, 2\rho(x_k))$, $k \in \mathbb N$, and
    $$ \mathbb S(f)(x;y,t)
        = \sum_{k=1}^\infty\chi_{Q_k}(x)\mathbb K_{loc}(f)(x;y,t)- S(f)(x;y,t), \quad y \in \R^n\;\;\mbox{and}\;\; t>0.$$
    We can write for every $y \in \R^n$ and $t>0$,
    $$\mathbb S(f)(x;y,t)
        = \chi_{\Gamma(x)}(y,t)\int_{\R^n}\sum_{k=1}^\infty \chi_{Q_k}(x)t^\beta\partial_t^\beta P_t(y-z)
            \Big(\chi_{B(x,\rho(x))}(z) - \chi_{Q_k^*}(z)\Big)f(z)dz.$$
    According to \cite[Lemma 1.4, (a)]{Sh} we deduce that, if
    $\chi_{Q_k}(x)\left(\chi_{B(x,\rho(x))}(z) - \chi_{Q_k^*}(z)\right)\not=0$, for some  $k \in \mathbb N$;
    then $\frac{1}{C_1}\rho(x)\leq |x-z| \leq C_1\rho(x)$, for a certain $C_1>0$.
    By (\ref{2.3}) it follows that
    \begin{align*}
        & \Big\|t^\beta\partial_t^\beta P_t(y-z)\chi_{\Gamma(x)}(y,t)\Big\|_{H}
            \leq C\left(\int_0^\infty\int_{|x-y|<t} \frac{t^{2\beta-n-1}}{(t+|y-z|)^{2(n+\beta)}}dydt\right)^{1/2} \\
        & \qquad \leq C\left(\int_0^\infty\int_{|x-y|<t} \frac{t^{2\beta-n-1}}{(t+|x-y|+|y-z|)^{2(n+\beta)}}dydt\right)^{1/2} \\
        & \qquad \leq C\left(\int_0^\infty\int_{|x-y|<t} \frac{t^{2\beta-n-1}}{(t+|x-z|)^{2(n+\beta)}}dydt\right)^{1/2}
            \leq C\left(\int_0^\infty \frac{t^{2\beta-1}}{(t+|x-z|)^{2(n+\beta)}}dt\right)^{1/2} \\
        & \qquad \leq \frac{C}{|x-z|^n}, \quad x,z\in \R^n.
    \end{align*}
    Minkowski's inequality and the property $(ii)$ of the sequence $\{Q_k^{**}\}^\infty_{k=1}$ lead to
    \begin{align*}\label{3.11}
        \|\mathbb S(f)(x;\cdot,\cdot)\|_{L^2\left(\R^n\times(0,\infty),\frac{dydt}{t^{n+1}},\B\right)}
            & \leq C\int_{\rho(x)/C_1\leq |x-z| \leq C_1\rho(x)}  \frac{\|f(z)\|_{\B}}{|x-z|^n}dz \nonumber \\
            & \leq C\frac{1}{\rho(x)^n}\int_{|x-z|\leq C_1\rho(x)} \|f(z)\|_{\B} dz
                \leq C\|f\|_{L^\infty(\R^n,\B)}.
    \end{align*}
    Note that by virtue of ($ii$), $C$ does not depend on $x\in \R^n$.

    By proceeding as in the study of $K_{glob}^{\mathfrak{L}_V}$ we conclude that
    $$\|\mathbb S(f)\|_{L^p(\R^n,\gamma(H,\B))}
        \leq C\|\mathcal M(\|f\|_{\B})\|_{L^p(\R^n)}
        \leq C\|f\|_{L^p(\R^n)}.$$
    On the other hand, according to Lemma~\ref{Lem2.1} and by taking into account
    the properties of the sequence $(x_k)_{k\in\mathbb N}$ we get
    \begin{align*}
        \|Sf\|_{L^p(\R^n,\gamma(H,\B))}
            & \leq C \bigg(\sum_{k=1}^\infty \|\chi_{Q_k}\mathbb K(f\chi_{Q_k^*})\|^p_{L^p(\R^n,\gamma(H,\B))}\bigg)^{1/p}
                \leq C\bigg(\sum_{k=1}^\infty\|f\chi_{Q_k^*}\|^p_{L^p(\R^n,\B)}\bigg)^{1/p} \\
            & \leq C\|f\|_{L^p(\R^n,\B)}.
    \end{align*}
    Also, we have that
    $$ \|\mathbb K_{loc}(f)\|_{L^p(\R^n,\gamma(H,\B))}
        \sim \bigg\|\sum_{k=1}^\infty\chi_{Q_k}\mathbb K_{loc}(f)\bigg\|_{L^p(\R^n,\gamma(H,\B))}.$$
    Then, we conclude that
    \begin{equation*}\label{M3}
        \|\mathbb K_{loc}(f)\|_{L^p(\R^n,\gamma(H,\B))}
            \leq C\|f\|_{L^p(\R^n,\B)},
    \end{equation*}
    where $C$ does not depend on $f$.
\end{proof}

By combining Lemmas \ref{Lem3.1}, \ref{Lem3.2} and \ref{Lem3.3} we obtain
$$ \|K^{\mathfrak{L}_V}(f)\|_{L^p(\R^n,\gamma(H,\B))}
    \leq C\|f\|_{L^p(\R^n,\B)}, \quad f \in C_c^\infty(\R^n) \otimes \B,$$
or, in other words,
\begin{equation*}\label{M4}
    \Big\|t^\beta \partial_t^\beta P_t^{\mathfrak{L}_V}(f)\Big\|_{T^2_p(\R^n,\B)}
        \leq C\|f\|_{L^p(\mathbb  R^n,\B)}, \quad f \in C_c^\infty(\R^n) \otimes \B,
\end{equation*}
provided that $\B$ is a UMD Banach space. Here $C>0$ does not depend on $f$.

We define the operator
$$ T(f)
    = t^\beta \partial_t^\beta  P_t^{\mathfrak{L}_V}(f), \quad f \in C_c^\infty(\R^n)\otimes \B.$$
Since $C_c^\infty(\R^n)\otimes \B$ is a dense subspace of
$L^p(\R^n;\B)$, $T$ can be extended from $C_c^\infty(\R^n)\otimes \B$ to $L^p(\R^n,\B)$
as a bounded operator $\widetilde T$ from $L^p(\R^n,\B)$ into $T_p^2(\R^n,\B)$.
The same argument developed in Lemma~\ref{Lem2.1} allows us to obtain that
$$ \widetilde{T}f
    = t^\beta \partial_t^\beta P_t^{\mathfrak{L}_V}(f), \quad f \in L^p(\R^n,\B),$$
and then,
$$ \left\|t^\beta \partial_t^\beta P_t^{\mathfrak{L}_V}(f)\right\|_{T_p^2(\R^n,\B)}
    \leq C \|f\|_{L^p(\R^n,\B)}, \;\; f \in L^p(\R^n,\B). $$

\subsection{}
We now prove that, there exists $C>0$ for which
\begin{equation*}\label{M5}
    \|f\|_{L^p(\R^n,\B)}
        \leq C\left\|t^\beta \partial_t^\beta P_t^{\mathfrak{L}_V}(f)\right\|_{T^2_p(\R^n,\B)}, \;\; f\in L^p(\R^n,\B).
\end{equation*}

According to \cite[Proposition 2.1, ($ii$)]{BFRTT} we have that, for every $f,g \in L^2(\R^n)$,
$$ \int_{\R^n}\int_0^\infty t^\beta \partial_t^\beta P_t^{\mathfrak{L}_V}(f)(x) t^\beta \partial_t^\beta P_t^{\mathfrak{L}_V}(g)(x)\frac{dtdx}{t}
    = \frac{\Gamma(2\beta)}{2^{2\beta}}\int_{\mathbb R^n} f(x)g(x)\,dx.$$
Then, for every $f,g \in L^2(\R^n)$,
\begin{align*}
    \int_{\R^n}\int_{\Gamma(x)} t^\beta \partial_t^\beta P_t^{\mathfrak{L}_V}(f)(y) t^\beta \partial_t^\beta P_t^{\mathfrak{L}_V}(g)(y)\frac{dydt}{t^{n+1}}
        & = v_n \int_{\R^n}\int_0^\infty t^\beta \partial_t^\beta P_t^{\mathfrak{L}_V}(f)(y) t^\beta \partial_t^\beta P_t^{\mathfrak{L}_V}(g)(y)\frac{dydt}{t} \\
        & = v_n \frac{\Gamma(2\beta)}{2^{2\beta}} \int_{\R^n} f(y)g(y) dy.
\end{align*}
Hence, for every $f \in L^2(\R^n)\otimes \B$ and $g \in L^2(\R^n) \otimes \B^*$, we get
$$ \int_{\R^n} \int_{\Gamma(x)} \langle t^\beta \partial_t^\beta P_t^{\mathfrak{L}_V}(g)(y) , t^\beta \partial_t^\beta P_t^{\mathfrak{L}_V}(f)(y) \rangle_{\B^*,\B} \frac{dydt}{t^{n+1}}
    = v_n \frac{\Gamma(2\beta)}{2^{2\beta}} \int_{\R^n} \langle g(y)f(y)\rangle_{\B^*,\B} dy.$$
Now (\ref{M5}) can be established by proceeding as in the proof of Lemma~\ref{Lem2.2}.

\section{Proof of Theorem \ref{Theorem 1.2}, $(ii)$}\label{sec:Hermite}

For every $k \in \mathbb N$ we consider the $k$-th Hermite function defined by
$$ h_k(x)
    = (\sqrt{\pi} 2^kk!)^{-1/2} e^{-x^2/2} H_k(x), \quad x \in \mathbb R,$$
where $H_k$ represents the $k$-th Hermite polynomial (\cite[pp. 105--106]{Sze}).
If $k=(k_1,\ldots,k_n) \in \mathbb N^n$, the $k$-th Hermite function $h_k$ in $\R^n$ is defined by
$$ h_k(x)
    = \prod_{j=1}^n h_{k_j}(x_j), \quad x=(x_1,\ldots,x_n) \in \R^n.$$
For every $k \in \mathbb N^n$, $h_k$ is an eigenfunction of the Hermite operator $\mathcal{H}$ satisfying
$$ \mathcal{H}h_k
    =(2|k|+n) h_k,$$
where $|k|=k_1+\ldots+k_n$. The system $\{h_k\}_{k\in \mathbb N^n}$ is an orthonormal basis in $L^2(\R^n)$.

The heat semigroup associated with $\{h_k\}_{k\in \mathbb N^n}$ is defined by
$$ W_t^{\mathcal{H}}(f)(x)
    =\sum_{k \in \mathbb N^n} e^{-t(2|k|+n)} c_k(f)h_k(x), \quad f \in L^2(\R^n),$$
where
$$c_k(f)
    = \int_{\R^n} h_k(y) f(y) dy, \quad k \in \N^n.$$
According to the Mehler's formula (\cite[(1.1.36)]{Th}) we can write, for every $t>0$,
\begin{equation}\label{4.1}
    W_t^{\mathcal{H}}(f)(x)
        = \int_{\R^n} W_t^{\mathcal{H}}(x,y)f(y)dy, \quad f \in L^2(\R^n),
\end{equation}
where, for every $x,y \in \R^n$ and $t>0$,
$$ W_t^{\mathcal{H}}(x,y)
    =\frac{1}{\pi^{n/2}}\left(\frac{e^{-2t}}{1-e^{-4t}}\right)^{n/2} \exp\left[-\frac{1}{4}\left( |x-y|^2 \frac{1+e^{-2t}}{1-e^{-2t}}  + |x+y|^2\frac{1-e^{-2t}}{1+e^{-2t}}\right)\right].$$

We define, for each $t>0$, and $1 \leq p \leq \infty$, the operator $W_t^{\mathcal{H}}$ on $L^p(\R^n)$ by (\ref{4.1}).
Then, $\{W_t^{\mathcal{H}}\}_{t>0}$ is a positive semigroup of contractions in $L^p(\R^n)$, for every $1<p<\infty$.
$W_t^{\mathcal{H}}$ can be extended to $L^p(\R^n,\B)$ with the same boundedness properties, for every $t>0$ and $1<p<\infty$.

In the Hermite setting the critical radius $\rho(x)$, $x\in\R^n$, satisfies that
$$ \displaystyle\rho(x) \sim\left\{ \begin{array}{ll}
\displaystyle \frac{1}{1+|x|},& |x|\geq 1 \\
&\\
\displaystyle \frac{1}{2}, & |x| <1.\end{array}\right.$$

We are going to see that in this context we can establish properties that allow us to
prove Theorem \ref{Theorem 1.2}, $(ii)$, by proceeding as in the proof of Theorem \ref{Theorem 1.2}, $(i)$.
Note that now $n$ can be any nonnegative integer.

Firstly, according to Feynman-Kac formula we have that
\begin{equation}\label{4.2}
    |W_t^{\mathcal{H}}(x,y)|
        \leq C\frac{e^{-|x-y|^2/4t}}{t^{n/2}}, \;\; x,y \in \R^n \;\;\mbox{and}\;\, t>0.
\end{equation}

On the other hand, we have that
\begin{itemize}
    \item If $x,y \in \R^n$, $x\cdot y>0$, then $|x+y| \geq |y|$ and
    \begin{align*}
        |W_t^{\mathcal{H}}(x,y)|
            &\leq C\left(\frac{e^{-2t}}{1-e^{-4t}}\right)^{n/2}
                \exp\left[-\frac{1}{4}\left( |x-y|^2 \frac{1+e^{-2t}}{1-e^{-2t}}+|y|^2\frac{1-e^{-2t}}{1+e^{-2t}}\right)\right]\\
            &\leq C \frac{e^{-c|x-y|^2/t}}{t^{n/2}} \left(\frac{1+e^{-2t}}{1-e^{-2t}}\right)^{1/2}\frac{1}{|y|}
            \leq C \frac{e^{-c|x-y|^2/t}}{t^{n/2}}\frac{1}{\sqrt{t}|y|}\\
            &\leq C \frac{e^{-c|x-y|^2/t}}{t^{n/2}}\frac{\rho(y)}{\sqrt{t}}, \quad|y|>1\;\;\mbox{and}\;\;t>0.
    \end{align*}
    \item If $x,y \in \R^n$, $x\cdot y<0$, then $|x-y| \geq |y|$ and
    \begin{align*}
        |W_t^{\mathcal{H}}(x,y)|
            &\leq C\left(\frac{e^{-2t}}{1-e^{-4t}}\right)^{n/2}
                \exp\left(-\frac{1}{8}\left( |x-y|^2 \frac{1+e^{-2t}}{1-e^{-2t}} + |y|^2\frac{1+e^{-2t}}{1-e^{-2t}}\right)\right)\\
            &\leq C \frac{e^{-ct}}{t^{n/2}} \exp\left(-\frac{1}{8}|x-y|^2 \frac{1+e^{-2t}}{1-e^{-2t}}\right)
                \left(\frac{1-e^{-2t}}{1+e^{-2t}}\right)^{1/2}\frac{1}{|y|} \\
            &\leq C \frac{\sqrt{t}e^{-ct}}{|y|} \frac{e^{-c|x-y|^2/t}}{t^{n/2}}
            \leq C \frac{e^{-c|x-y|^2/t}}{t^{n/2}}\frac{\rho(y)}{\sqrt{t}}, \quad|y|>1\;\;\mbox{and}\;\;t>0.
    \end{align*}
    \item If $x,y \in \R^n$ and  $|y|<1$, then
    \begin{align*}
        |W_t^{\mathcal{H}}(x,y)|
            &\leq C \frac{e^{-c|x-y|^2/t}}{t^{n/2}}\frac{1}{\sqrt{t}}
             \leq C \frac{e^{-c|x-y|^2/t}}{t^{n/2}}\frac{\rho(y)}{\sqrt{t}}, \quad|y|\leq 1\;\;\mbox{and}\;\;t>0.
    \end{align*}
\end{itemize}
We conclude that
\begin{equation}\label{4.3}
    \left|W_t^{\mathcal{H}}(x,y)\right|
        \leq C\frac{e^{-c|x-y|^2/t}}{t^{n/2}}\frac{\rho(y)}{\sqrt{t}}, \quad x,y\in \R^n\;\; \mbox{and}\;\;t>0.
\end{equation}
The Poisson semigroup $\{P_t^{\mathcal{H}}\}_{t>0}$ associated with the Hermite
operator is defined, as usual, by subordination
$$P_t^{\mathcal{H}}(f)(x)
    = \frac{t}{\sqrt{4\pi}}\int_0^\infty \frac{e^{-t^2/4u}}{u^{3/2}}W_u^\mathcal{H}(f)(x)du, \quad f \in L^p(\R^n)\;\;\mbox{and}\;\;t>0.$$
The Poisson kernel $P_t^{\mathcal{H}}(x,y)$ can be written as
$$ P_t^{\mathcal{H}}(x,y)
    = \frac{t}{\sqrt{4\pi}}\int_0^\infty \frac{e^{-t^2/4u}}{u^{3/2}}W_u^\mathcal{H}(x,y)du, \quad x,y\in \R^n \;\;\mbox{and}\;\;t>0.$$
By using \eqref{3.3} and \eqref{4.3} we obtain
\begin{equation}\label{4.4}
    \left|t^\beta\partial_t^\beta P_t^{\mathcal{H}}(z,y)\right|
        \leq C\frac{\rho(y)t^\beta}{(t+|y-z|)^{\beta+n+1}}, \quad z,y \in \R^n\;\;\mbox{and}\;\; t>0.
\end{equation}
We also have that
\begin{align}\label{4.5}
    \int_{\R^n}|z|^2\frac{e^{-c|y-z|^2/s}}{s^{n/2}}dz
        &= \int_{\R^n}|y-w\sqrt{s}|^2 e^{-c|w|^2}dw
        = \int_{\R^n}(|y|^2+|w|^2s+2|y||w|\sqrt{s})e^{-c|w|^2}dw\nonumber\\
        &\leq \frac{C}{s}\frac{s}{\rho(y)^2}\int_{\R^n}(1+|w|^2+|w|)e^{-c|w|^2}dw
        \leq \frac{C}{s}\frac{s}{\rho(y)^2}, \quad 0<s<\rho(y)^2.
\end{align}
Note that $\rho(y) \leq 1$, $y\in \R^n$.

By proceeding as in the proof of \cite[Lemma 1.4]{Sh} we can see that there exists $1/2 \leq \gamma < 1$ such that
\begin{equation}\label{4.6}
    \rho(y)
        \leq C\rho(x) \left(1+\frac{|x-y|}{\rho(x)}\right)^\gamma, \quad x,y\in \R^n.
\end{equation}

Also, we can find a sequence $(x_k)_{k\in \mathbb N}$ in $\mathbb R^d$ such that if $Q_k=B(x_k,\rho(x_k))$ and
$Q^{**}_k=B(x_k,4\rho(x_k))$, $k \in \mathbb N$, the following properties hold (\cite[Lemma 2.3]{DZ1})
\begin{enumerate}
    \item $\cup_{k\in\mathbb N} Q_k=\R^n$,
    \item There exists $N\in\mathbb N$ such that, for every $k\in \mathbb N$, card$\{j\in \mathbb N: Q_j^{**}\cap Q_k^{**}\not=\emptyset\}\leq N$.
\end{enumerate}

These properties of the sequence of balls $\{Q_k\}_{k\in \mathbb N}$ and the estimates (\ref{4.2})-(\ref{4.6})
allow us to show Theorem \ref{Theorem 1.2} $(ii)$, by proceeding as in the proof of Theorem \ref{Theorem 1.2}, $(i)$.

\section{Proof of Theorem \ref{teo 1.3}}\label{sec:Bessel}

\subsection{}
Our first objective is to show that
\begin{equation}\label{5.1}
    \|t^\beta \partial _t^\beta P_t^{\mathfrak{B}_\lambda }(f)\|_{T_p^2((0,\infty ),\B)}
        \leq C\|f\|_{L^p((0,\infty ), \B)}, \quad f\in L^p((0,\infty ),\B).
\end{equation}
We can write
$$ B_\lambda
    =-\frac{d^2}{dx^2}+V(x),\quad x\in (0,\infty ),$$
where $V(x)=\lambda (\lambda -1)/x^2$.
Then, $\partial _tP_t^{\mathfrak{B}_\lambda}(1)\not=0$, and in order to prove (\ref{5.1})
we cannot use \cite[Theorem 4.8]{HNP}. We are going to proceed as in Section~\ref{sec:Schrodinger},
by comparing the operator $t^\beta \partial _t^\beta P_t^{\mathfrak{B}_\lambda }(f)$
with the one related to the one-dimensional classical Poisson semigroup given by \eqref{eq:Poissinteg}.
In the following lemmas we collect some estimates that will be very helpful
for our purposes.

\begin{lema}\label{Lem5.1}
    Let $\beta>0$. Then,
    $$\Big \| t^\beta \partial_t^\beta P_t(y \pm z) \chi_{\Gamma_+(x)}(y,t)\Big\|_{H_+}
        \leq \frac{C}{|x \pm z|}, \quad x,z \in (0,\infty),$$
    where $\Gamma_+(x)=\{(y,t) \in (0,\infty)^2 : |x-y|<t\}$.
\end{lema}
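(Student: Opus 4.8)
The plan is to reduce the estimate to the pointwise bound for the fractional derivative of the one‑dimensional classical Poisson kernel and then to carry out an explicit integration in $t$. Taking $n=1$ in \eqref{2.3} (which is exactly the case $n=1$ of the computation performed in the proof of Lemma~\ref{Lem2.1}), we have
$$\left|t^\beta\partial_t^\beta P_t(w)\right|
    \leq C\frac{t^\beta}{(t+|w|)^{1+\beta}}, \quad w\in\mathbb{R},\ t>0.$$
Applying this with $w=y\pm z$ and unravelling the definition of the $H_+$‑norm,
$$\left\|t^\beta\partial_t^\beta P_t(y\pm z)\chi_{\Gamma_+(x)}(y,t)\right\|_{H_+}^2
    \leq C\int_0^\infty\int_{\{y>0:\,|x-y|<t\}}\frac{t^{2\beta}}{(t+|y\pm z|)^{2+2\beta}}\,\frac{dy\,dt}{t^2}.$$

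Next I would transfer the singularity from $|y\pm z|$ to $|x\pm z|$ by the triangle inequality: on the region $|x-y|<t$ one has $|x\pm z|\leq|x-y|+|y\pm z|<t+|y\pm z|$, hence $t+|x\pm z|<2(t+|y\pm z|)$ and therefore
$$\frac{1}{(t+|y\pm z|)^{2+2\beta}}\leq\frac{2^{2+2\beta}}{(t+|x\pm z|)^{2+2\beta}}.$$
Since the $y$‑section $\{y>0:|x-y|<t\}$ has length at most $2t$, this gives
$$\left\|t^\beta\partial_t^\beta P_t(y\pm z)\chi_{\Gamma_+(x)}(y,t)\right\|_{H_+}^2
    \leq C\int_0^\infty\frac{t^{2\beta-1}}{(t+|x\pm z|)^{2+2\beta}}\,dt.$$
Finally, the change of variables $t=|x\pm z|s$ yields
$$\int_0^\infty\frac{t^{2\beta-1}}{(t+|x\pm z|)^{2+2\beta}}\,dt
    =\frac{1}{|x\pm z|^2}\int_0^\infty\frac{s^{2\beta-1}}{(1+s)^{2+2\beta}}\,ds,$$
and the last integral is finite because $2\beta-1>-1$ (as $\beta>0$) gives integrability at $s=0$ while the decay like $s^{-3}$ gives integrability at $s=\infty$. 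Taking square roots produces the claimed bound $C/|x\pm z|$.

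I do not anticipate any essential difficulty: the argument is a routine size estimate. The only points worth stressing are that the pointwise kernel bound is genuinely available in dimension one (being the one‑dimensional instance of \eqref{2.3}), and that the same chain of inequalities handles both signs $y+z$ and $y-z$ simultaneously, since the triangle inequality step is insensitive to the sign and the positivity of $z$ is never used beyond making $y\pm z$ a real argument of $P_t$.
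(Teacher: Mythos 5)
Your proof is correct. The only point that needs to be available is the pointwise bound $|t^\beta\partial_t^\beta P_t(w)|\leq Ct^\beta(t+|w|)^{-1-\beta}$ in dimension one, and this is indeed covered by \eqref{2.3}: the derivation in the proof of Lemma~\ref{Lem2.1} treats $n=1$ through \eqref{2.2}, so invoking it with $w=y\pm z$ is legitimate. From there your chain of inequalities (triangle inequality $|x\pm z|\leq|x-y|+|y\pm z|<t+|y\pm z|$ on the cone, the $y$-section of length at most $2t$, and the scaling $t=|x\pm z|s$ with $2\beta-1>-1$ at the origin and decay $s^{-3}$ at infinity) is complete and handles both signs at once.

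The route differs slightly from the paper's. The paper does not use \eqref{2.3}; instead it starts from the exact representation \eqref{5.2} of $t^\beta\partial_t^\beta P_t$ as a finite sum of the profiles $\varphi^k(z/t)$, applies Minkowski's inequality to pull the $H_+$-norm inside the auxiliary $v$-integral, and then performs for each $v$ the same geometric estimate you use (inserting $|x-y|<t$, using $|x-y|+y\geq x$ for the $+$ sign and $|x-y|+|y-z|\geq|x-z|$ for the $-$ sign, bounding the $y$-section by $2t$, and computing the $t$-integral), finally integrating in $v$. In effect the paper rederives the size estimate inside the computation, which keeps the proof self-contained in the form \eqref{5.2} that is needed anyway for the finer cancellation arguments of Lemmas~\ref{Lem5.2} and \ref{Lem5.3}; your argument buys brevity by reusing the already established bound \eqref{2.3}, at no loss of rigor for this particular lemma.
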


\begin{proof}
    In \cite[Lemma 2]{BCCFR1} it was established that
    \begin{equation}\label{5.2}
        t^\beta \partial _t^\beta P_t(z)
            =\sum_{k=0}^{(m+1)/2}\frac{c_k}{t}\varphi ^k\Big(\frac{z}{t}\Big),\quad z\in \mathbb{R} \mbox{ and } t>0,
    \end{equation}
    where $m\in \mathbb{N}$ is such that $m-1\leq \beta <m$, and, for every $k\in \mathbb{N}$,
    $0\leq k\leq (m+1)/2$,
    $c_k\in \mathbb{C}$ and
    $$ \varphi ^k(z)
        =\int_0^\infty \frac{(1+v)^{m+1-2k}v^{m-\beta -1}}{((1+v)^2+z^2)^{m-k+1}}dv,\quad z\in\mathbb{R}.$$
    Let $k\in \mathbb{N}$, $0\leq k\leq (m+1)/2$. We can write
    $$\frac{1}{t}\varphi ^k\Big(\frac{y+z}{t}\Big)
        =t^{2(m-k)+1}\int_0^\infty \frac{(1+v)^{m+1-2k}v^{m-\beta -1}}{((1+v)^2t^2+(y+z)^2)^{m-k+1}}dv,\quad t,y,z\in (0,\infty ).$$
    By using Minkowski's inequality we obtain
    \begin{align*}
        &\left(\int_{\Gamma _+(x)}\left|\frac{1}{t}\varphi ^k\Big(\frac{y+z}{t}\Big)\right|^2\frac{dydt}{t^2}\right)^{1/2} \\
        & \qquad \leq C\int_0^\infty (1+v)^{m+1-2k}v^{m-\beta -1}\left(\int_0^\infty \int_{|x-y|<t}\frac{t^{4(m-k)}}{(t+ y+z +tv)^{4(m-k+1)}}dydt\right)^{1/2}dv\\
        & \qquad \leq C\int_0^\infty (1+v)^{m+1-2k}v^{m-\beta -1}\left(\int_0^\infty \int_{|x-y|<t}\frac{t^{4(m-k)}}{(t+|x-y|+y+z+tv)^{4(m-k+1)}}dydt\right)^{1/2}dv\\
        & \qquad \leq C\int_0^\infty (1+v)^{m+1-2k}v^{m-\beta -1}\left(\int_0^\infty \frac{t^{4(m-k)+1}}{(t(1+v)+x+z)^{4(m-k+1)}}dt\right)^{1/2}dv\\
        & \qquad \leq \frac{C}{x+z}\int_0^\infty \frac{v^{m-\beta -1}}{(1+v)^m}dv,\quad x,z\in (0,\infty ).
    \end{align*}
    Hence,
    \begin{equation*}\label{5.4}
      \Big \| t^\beta \partial_t^\beta P_t(y+z) \chi_{\Gamma_+(x)}(y,t)\Big\|_{H_+}
            \leq \frac{C}{x+z},\quad x,z\in (0,\infty ).
    \end{equation*}
    By taking into account that $|x-y|+|y-z|\geq |x-z|$, $x,y,z\in (0,\infty )$, the above arguments allow us to obtain that
    \begin{equation*}\label{5.5}
       \Big \| t^\beta \partial_t^\beta P_t(y-z) \chi_{\Gamma_+(x)}(y,t)\Big\|_{H_+}\leq \frac{C}{|x-z|}.
    \end{equation*}
\end{proof}

It is common to decompose the Bessel-Poisson kernel as follows
\begin{align*}
    P_t^{\mathfrak{B}_\lambda}(y,z)
        & = \frac{2\lambda (yz)^\lambda t}{\pi} \bigg( \int_0^{\pi /2} + \int_{\pi /2}^\pi \bigg)
                \frac{(\sin \theta )^{2\lambda -1}}{(t^2 + (y-z)^2+2yz(1-\cos \theta ))^{\lambda +1}}d\theta \\
        & = P_t^{\mathfrak{B}_\lambda, 1}(y,z) + P_t^{\mathfrak{B}_\lambda, 2}(y,z), \quad t,y,z\in (0,\infty ).
\end{align*}

\begin{lema}\label{Lem5.2}
    Let $\beta, \lambda >0$. Then,
    $$\Big \| t^\beta \partial_t^\beta P_t^{\mathfrak{B}_\lambda, 1}(y,z) \chi_{\Gamma_+(x)}(y,t)\Big\|_{H_+}
        \leq \frac{C}{x- z}, \quad x,z \in (0,\infty),$$
    and
    $$\Big \| t^\beta \partial_t^\beta P_t^{\mathfrak{B}_\lambda, 2}(y,z) \chi_{\Gamma_+(x)}(y,t)\Big\|_{H_+}
        \leq \frac{C}{|x+z|}, \quad x,z \in (0,\infty).$$
\end{lema}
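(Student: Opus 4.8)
The plan is to reduce both inequalities to a single pointwise estimate for the model ``fake Poisson kernels'' $t/(t^2+r^2)^{\lambda+1}$ and then to the cone computation already carried out in Lemma~\ref{Lem5.1}. First I would record the elementary geometry of the angular variable: writing $r_\theta^2=(y-z)^2+2yz(1-\cos\theta)$, on $(0,\pi/2)$ one has $\sin\theta\sim\theta$ and $1-\cos\theta\sim\theta^2$, so that $(\sin\theta)^{2\lambda-1}\le C_\lambda\,\theta^{2\lambda-1}$ and $r_\theta^2\ge (y-z)^2+c\,yz\,\theta^2$; while on $(\pi/2,\pi)$ one has $1-\cos\theta\in[1,2]$, so $y^2+z^2\le r_\theta^2\le (y+z)^2$, i.e.\ $r_\theta\sim y+z$, and $\int_{\pi/2}^{\pi}(\sin\theta)^{2\lambda-1}\,d\theta<\infty$ because $\lambda>0$.

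The key pointwise lemma I would establish is that, for all $r,t>0$,
$$\Big|t^\beta\partial_t^\beta\Big[\frac{t}{(t^2+r^2)^{\lambda+1}}\Big]\Big|\le \frac{C\,t^\beta}{(t+r)^{2\lambda+1+\beta}}.$$
This follows by the scheme behind \eqref{2.1}--\eqref{2.3}: Leibniz' rule together with the Fa\`a di Bruno expansion of $\partial_t^k(t^2+r^2)^{-\mu}$ gives $|\partial_t^m[t(t^2+r^2)^{-(\lambda+1)}]|\le C(t+r)^{-(2\lambda+1+m)}$ (with $m-1\le\beta<m$), and then the Segovia--Wheeden representation of $\partial_t^\beta$ combined with $\int_0^\infty s^{m-\beta-1}(t+s+r)^{-(2\lambda+1+m)}\,ds\le C(t+r)^{-(2\lambda+1+\beta)}$ closes it. Differentiating under the $\theta$-integral --- legitimate since the above bounds make every integral absolutely convergent, which also shows that each $P_t^{\mathfrak{B}_\lambda,i}(y,z)$ is finite --- yields
$$\big|t^\beta\partial_t^\beta P_t^{\mathfrak{B}_\lambda,1}(y,z)\big|\le C(yz)^\lambda\int_0^{\pi/2}\frac{\theta^{2\lambda-1}\,t^\beta}{(t+r_\theta)^{2\lambda+1+\beta}}\,d\theta.$$
Using $(t+r_\theta)^2\ge t^2+(y-z)^2+c\,yz\,\theta^2$ and the rescaling $\theta=\sqrt{t^2+(y-z)^2}\,u/\sqrt{yz}$ (the resulting $u$-integral converges precisely because $\beta,\lambda>0$), the weight $(yz)^\lambda$ cancels and one arrives at the classical Poisson-type profile
$$\big|t^\beta\partial_t^\beta P_t^{\mathfrak{B}_\lambda,1}(y,z)\big|\le \frac{C\,t^\beta}{(t+|y-z|)^{1+\beta}},\qquad t,y,z\in(0,\infty).$$
For the second piece the angular integral is harmless because there $r_\theta\sim y+z$, so the model lemma directly gives $|t^\beta\partial_t^\beta P_t^{\mathfrak{B}_\lambda,2}(y,z)|\le C(yz)^\lambda t^\beta(t+y+z)^{-(2\lambda+1+\beta)}$, and $(yz)^\lambda\le(y+z)^{2\lambda}\le(t+y+z)^{2\lambda}$ then produces $|t^\beta\partial_t^\beta P_t^{\mathfrak{B}_\lambda,2}(y,z)|\le C\,t^\beta(t+y+z)^{-(1+\beta)}$.

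It then remains to integrate these profiles over the cone. For the first piece, since $H_+=L^2((0,\infty)^2,dy\,dt/t^2)$,
$$\Big\|t^\beta\partial_t^\beta P_t^{\mathfrak{B}_\lambda,1}(y,z)\chi_{\Gamma_+(x)}(y,t)\Big\|_{H_+}^2\le C\int_0^\infty\int_{|x-y|<t}\frac{t^{2\beta-2}}{(t+|y-z|)^{2+2\beta}}\,dy\,dt,$$
which I would split at $t=|x-z|/2$: for $t\le|x-z|/2$ one has $|y-z|>|x-z|/2$ on $\Gamma_+(x)$, whereas for $t>|x-z|/2$ one uses $t+|y-z|\ge t$ and integrates $y$ over all of $\R$; both ranges contribute $C|x-z|^{-2}$, which gives the first estimate. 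The second estimate is obtained by the same argument with $y+z$ replacing $|y-z|$, using that on $\Gamma_+(x)$ one has $y>x-t$, hence $t+y+z>x+z$; splitting at $t=x$ yields $C(x+z)^{-2}$. This final computation is essentially the one already performed in Lemma~\ref{Lem5.1}, which could alternatively just be invoked once the two pointwise profiles above are in hand.

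The step I expect to be the main obstacle is producing the \emph{sharp} pointwise bound for $t^\beta\partial_t^\beta P_t^{\mathfrak{B}_\lambda,1}$: one must keep exactly the $2\lambda$ extra orders of decay carried by the model kernel so that, after the angular integration, the weight $(yz)^\lambda$ is absorbed and the profile is $t^\beta(t+|y-z|)^{-(1+\beta)}$ rather than something weaker --- the $H_+$-estimate behind Lemma~\ref{Lem5.1} being tuned precisely to that profile. The delicate point is the degeneracy of $\sin\theta$ as $\theta\to0$, which is controlled by the inequality $(t+r_\theta)^2\ge t^2+(y-z)^2+c\,yz\,\theta^2$ feeding into the one-variable rescaling.
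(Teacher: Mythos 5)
Your argument is correct, but it is organized differently from the paper's. You first integrate out the angular variable and the Segovia--Wheeden variable to obtain the pointwise majorants $|t^\beta\partial_t^\beta P_t^{\mathfrak{B}_\lambda,1}(y,z)|\le Ct^\beta(t+|y-z|)^{-1-\beta}$ and $|t^\beta\partial_t^\beta P_t^{\mathfrak{B}_\lambda,2}(y,z)|\le Ct^\beta(t+y+z)^{-1-\beta}$, and then compute a single scalar cone integral. The paper instead starts from the representation \eqref{5.3} of $t^\beta\partial_t^\beta P_t^{\mathfrak{B}_\lambda}$ as a finite sum of $v$-integrals of the functions $\varphi^{\lambda,k}$, applies Minkowski's inequality to put the $L^2(\Gamma_+(x),\frac{dydt}{t^2})$ norm inside the $v$- and $\theta$-integrals, and only then evaluates cone integrals, carrying the weight $(yz)^\lambda$ throughout; the pointwise profile you derive appears in the paper only afterwards, as Lemma \ref{Lem5.5}, and only for the full kernel. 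Both routes rest on the same two ingredients, namely the Fa\`a di Bruno/Segovia--Wheeden bound $|\partial_t^m[t(t^2+r^2)^{-\lambda-1}]|\le C(t+r)^{-2\lambda-1-m}$ and the rescaling $\theta=\sqrt{t^2+(y-z)^2}\,u/\sqrt{yz}$ (respectively $yz\le (y+z)^2/4$ for the second piece) which absorbs $(yz)^\lambda$, so the computations are essentially parallel; your version has the merit of isolating a reusable pointwise estimate (it reproves Lemma \ref{Lem5.5} piecewise) and of reducing the $H_+$ bound to the cone integral already done in Lemma \ref{Lem5.1}, at the modest cost of justifying differentiation under the $\theta$-integral, which you do. One small slip at the end: for the second piece, splitting at $t=x$ and using only $t+y+z\ge t$ on $\{t>x\}$ yields $C/x$, not $C/(x+z)$; split instead at $t=x+z$, or use $t+y+z\ge\tfrac12(t+x+z)$ (which follows from $y>x-t$ together with $t+y+z\ge t$), after which the same one-variable integral gives the claimed $C(x+z)^{-2}$.
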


\begin{proof}
    By \cite[(27)]{BCR3} we have that, for each $t,x,y \in (0,\infty)$,
    \begin{equation}\label{5.3}
        t^\beta \partial _t^\beta P_t^{\mathfrak{B}_\lambda }(x,y)
            =\sum_{k=0}^{(m+1)/2}\frac{b_k^\lambda }{t^{2\lambda +1}}(xy)^\lambda \int_0^\pi (\sin \theta )^{2\lambda -1}\varphi ^{\lambda , k}\left(\frac{\sqrt{(x-y)^2+2xy(1-\cos \theta )}}{t}\right)d\theta,
    \end{equation}
    where $m \in \N$ is such that $m-1 \leq \beta < m$ and, for every $k\in \mathbb{N}$, $0\leq k\leq (m+1)/2$,
    $$ \varphi ^{\lambda ,k}(z)
        =\int_0^\infty \frac{(1+v)^{m+1-2k}v^{m-\beta -1}}{((1+v)^2+z^2)^{\lambda +m-k+1}}dv,\quad z\in (0,\infty),$$
    and
    $$b_k^\lambda
        =\frac{2\lambda (\lambda +1)\cdots (\lambda +m-k)}{(m-k)!}c_k.$$
    Here $c_k$ is as in (\ref{5.2}).
    By (\ref{5.3}) we get
    $$ t^\beta \partial_t^\beta P_t^{\mathfrak{B}_\lambda, 2}(y,z)
        =\sum_{k=0}^{(m+1)/2}\frac{b_k^\lambda }{t^{2\lambda +1}}(yz)^\lambda \int_{\pi /2}^\pi (\sin \theta )^{2\lambda -1}
            \varphi^{\lambda ,k}\left(\frac{\sqrt{(y-z)^2+2yz(1-\cos \theta )}}{t}\right)d\theta,$$
    for each $t,y,z\in (0,\infty )$. Let $k\in \mathbb{N}$, $0\leq k\leq (m+1)/2$.
    We have that, for every $t,y,z\in (0,\infty )$,
    \begin{align*}
        &\left|\frac{(yz)^\lambda }{t^{2\lambda +1}}\int_{\pi /2}^\pi (\sin \theta )^{2\lambda -1}\int_0^\infty \frac{(1+v)^{m+1-2k}v^{m-\beta -1}}{((1+v)^2+\frac{(y-z)^2+2yz(1-\cos \theta ))}{t^2})^{\lambda +m-k+1}}dvd\theta \right|\\
        & \qquad \leq C(yz)^\lambda t^{2m-2k+1}\int_{\pi /2}^\pi (\sin \theta )^{2\lambda -1}\int_0^\infty \frac{(1+v)^{m+1-2k}v^{m-\beta -1}}{(t^2(1+v)^2+(y-z)^2+2yz)^{\lambda +m-k+1}}dvd\theta .
    \end{align*}
    Hence, Minkowski's inequality leads to
    \begin{align*}
        &\left(\int_{\Gamma _+(x)}\left|\frac{(yz)^\lambda }{t^{2\lambda +1}}\int_{\pi /2}^\pi (\sin \theta )^{2\lambda -1}
            \varphi^{\lambda ,k}\left(\frac{\sqrt{(y-z)^2+2yz(1-\cos \theta )}}{t}\right)d\theta\right|^2\frac{dydt}{t^2}\right)^{1/2}\\
        & \qquad \leq C\left(\int_{\Gamma _+(x)}\left((yz)^\lambda t^{2m-2k+1}\int_0^\infty \frac{(1+v)^{m+1-2k}v^{m-\beta -1}}{(t^2(1+v)^2+y^2+z^2)^{\lambda +m-k+1}}dv\right)^2\frac{dydt}{t^2}\right)^{1/2}\\
        & \qquad \leq C\int_0^\infty (1+v)^{m+1-2k}v^{m-\beta -1}\left(\int_{\Gamma _+(x)}\left(\frac{t^{2m-2k+1}}{(t^2(1+v)^2+y^2+z^2)^{m-k+1}}\right)^2\frac{dydt}{t^2}\right)^{1/2}dv\\
        & \qquad \leq C\int_0^\infty (1+v)^{m+1-2k}v^{m-\beta -1}\left(\int_0^\infty \frac{t^{4(m-k)+1}}{(t(1+v)+x+z)^{4(m-k+1)}}dt\right)^{1/2}dv\\
        & \qquad \leq \frac{C}{x+z},\quad x,z\in (0,\infty ).
    \end{align*}
    Then,
    \begin{equation*}\label{5.6}
        \left(\int_{\Gamma _+(x)}|t^\beta \partial _t^\beta P_t^{\mathfrak{B}_{\lambda ,2}}(y,z)|^2\frac{dydt}{t^2}\right)^{1/2}
            \leq \frac{C}{x+z},\quad x,z\in (0,\infty ).
    \end{equation*}
    In a similar way we can see that
    \begin{equation*}\label{5.7}
        \left(\int_{\Gamma _+(x)}|t^\beta \partial _t^\beta P_t^{\mathfrak{B}_\lambda, 1}(y,z)|^2\frac{dydt}{t^2}\right)^{1/2}
        \leq \frac{C}{|x-z|}, \quad x,z\in (0,\infty ).
    \end{equation*}
\end{proof}

\begin{lema}\label{Lem5.3}
    Let $\beta, \lambda >0$. Then,
    $$\Big \| t^\beta \partial_t^\beta [P_t^{\mathfrak{B}_\lambda, 1}(y,z) - P_t(y-z)]
            \chi_{\Gamma_+(x)}(y,t)\Big\|_{H_+}
        \leq \frac{C}{z} \Big( 1 + \log_+ \frac{z}{|x-z|} \Big), \quad 0< \frac{x}{2} < z < 2x.$$
\end{lema}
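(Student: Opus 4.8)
The plan is to expand both kernels by means of \eqref{5.2} and \eqref{5.3} and to compare them term by term, exploiting that near $\theta=0$ the integrand defining $P_t^{\mathfrak{B}_\lambda,1}$ agrees to leading order with the one producing the classical Poisson kernel. By \eqref{5.3}, $t^\beta\partial_t^\beta P_t^{\mathfrak{B}_\lambda,1}(y,z)$ is a finite sum over $0\le k\le(m+1)/2$ of terms $\frac{b_k^\lambda(yz)^\lambda}{t^{2\lambda+1}}\int_0^{\pi/2}(\sin\theta)^{2\lambda-1}\varphi^{\lambda,k}\big(\tfrac{\sqrt{(y-z)^2+2yz(1-\cos\theta)}}{t}\big)d\theta$, while by \eqref{5.2} the classical term is $\sum_k\frac{c_k}{t}\varphi^k\big(\tfrac{y-z}{t}\big)$. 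The first thing I would record is the identity
\[
\frac{c_k}{t}\varphi^k\Big(\frac{y-z}{t}\Big)=\frac{b_k^\lambda(yz)^\lambda}{t^{2\lambda+1}}\int_0^\infty\theta^{2\lambda-1}\,\varphi^{\lambda,k}\Big(\frac{\sqrt{(y-z)^2+yz\theta^2}}{t}\Big)d\theta,\qquad y,z,t\in(0,\infty),
\]
obtained by interchanging the order of integration, performing the substitution $\sigma^2=yz\theta^2/(t^2(1+v)^2+(y-z)^2)$ in the $\theta$-integral, and using the normalisation $b_k^\lambda\int_0^\infty\sigma^{2\lambda-1}(1+\sigma^2)^{-(\lambda+m-k+1)}d\sigma=c_k$. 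Subtracting, it suffices to estimate the $H_+$-norm over $\Gamma_+(x)$ of $\frac{b_k^\lambda(yz)^\lambda}{t^{2\lambda+1}}(R_1+R_2-R_3)$ for each $k$, where $R_1=\int_0^{\pi/2}\big((\sin\theta)^{2\lambda-1}-\theta^{2\lambda-1}\big)\varphi^{\lambda,k}\big(\tfrac{\sqrt{(y-z)^2+2yz(1-\cos\theta)}}{t}\big)d\theta$ isolates the replacement of $\sin\theta$ by $\theta$, $R_2=\int_0^{\pi/2}\theta^{2\lambda-1}\big[\varphi^{\lambda,k}\big(\tfrac{\sqrt{(y-z)^2+2yz(1-\cos\theta)}}{t}\big)-\varphi^{\lambda,k}\big(\tfrac{\sqrt{(y-z)^2+yz\theta^2}}{t}\big)\big]d\theta$ the replacement of $2yz(1-\cos\theta)$ by $yz\theta^2$, and $R_3=\int_{\pi/2}^\infty\theta^{2\lambda-1}\varphi^{\lambda,k}\big(\tfrac{\sqrt{(y-z)^2+yz\theta^2}}{t}\big)d\theta$ is the tail.

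For the pointwise estimates I would use, from the definition of $\varphi^{\lambda,k}$ and differentiation under the integral sign, the size and Lipschitz bounds $\varphi^{\lambda,k}(w)\le C(1+w)^{-(2\lambda+\beta+1)}$ and $|(\varphi^{\lambda,k})'(w)|\le C(1+w)^{-(2\lambda+\beta+2)}$, together with the elementary inequalities $|(\sin\theta)^{2\lambda-1}-\theta^{2\lambda-1}|\le C\theta^{2\lambda+1}$, $|2(1-\cos\theta)-\theta^2|\le C\theta^4$ and $c\,\theta^2\le 2(1-\cos\theta)\le\theta^2$ on $(0,\pi/2)$; performing in each $R_j$ the substitution $\sigma=\sqrt{yz}\,\theta$, the extra power of $\theta$ present in the integrand translates into a gain, relative to the individual bounds underlying Lemmas~\ref{Lem5.1} and \ref{Lem5.2}, of a factor comparable to $(t+|y-z|)/\sqrt{yz}$. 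Since $x/2<z<2x$, one has $\sqrt{yz}\sim z$ whenever $(y,t)\in\Gamma_+(x)$ with $t\lesssim z$, so in the region $y\sim z\sim x$ the difference kernel is dominated by $C\,t^\beta z^{-1}(t+|y-z|)^{-\beta}$, i.e. it is one power of $(t+|y-z|)/z$ smaller than $t^\beta\partial_t^\beta P_t(y-z)$ itself; for $t\gtrsim z$ or $y$ away from $z$ the gain is lost and one falls back on the crude bound $|t^\beta\partial_t^\beta[P_t^{\mathfrak{B}_\lambda,1}(y,z)-P_t(y-z)]|\le C\,t^\beta(t+|y-z|)^{-\beta-1}$ coming from \eqref{5.2}, \eqref{5.3} and from $\varphi^{\lambda,k}(w)\le Cw^{-(2\lambda+\beta+1)}$ applied to $R_3$.

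Inserting these bounds in $\int_0^\infty\int_{|x-y|<t}|\cdot|^2\frac{dy\,dt}{t^2}$, using $\int_{|x-y|<t}dy\le 2t$, and splitting the $t$-integral at $t\sim|x-z|$ and at $t\sim z$, the contributions of the range $t\lesssim|x-z|$ (where $|y-z|\sim|x-z|$) and of the range $t\gtrsim z$ (where the decay in $t$ dominates) are each $O(1/z)$ with room to spare; in the remaining borderline range $|x-z|\lesssim t\lesssim z$ the estimate $C\,t^\beta z^{-1}(t+|y-z|)^{-\beta}$ yields, after the $y$-integration, an integrand comparable to $z^{-2}t^{-1}$, whose integral over $(|x-z|,z)$ equals $z^{-2}\log(z/|x-z|)$. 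Taking square roots and summing over $k$ gives the bound $C\,z^{-1}(1+\log_+(z/|x-z|))$.

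The delicate part is the combination of the pointwise estimates with the bookkeeping in the last step: the decomposition of the cone (in $t$ relative to $|x-z|$ and to $z$, and in $y$ relative to $z$) must be chosen so that, on the one hand, the extra smoothness of the difference kernel is genuinely used—otherwise one only recovers the $1/|x-z|$ bound of Lemma~\ref{Lem5.2}—and, on the other hand, the borderline logarithmic divergence of $\int_{|x-z|}^{z}dt/t$ is captured exactly and not overestimated. I expect the treatment of $R_2$, the $1-\cos\theta$ correction, to be the most technical point, since it requires the Lipschitz bound for $\varphi^{\lambda,k}$ and a careful accounting of the powers of $t$, $|y-z|$ and $\sqrt{yz}$ that appear after the substitution $\sigma=\sqrt{yz}\,\theta$.
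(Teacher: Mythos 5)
Your proposal is correct and, in its skeleton, coincides with the paper's proof: the termwise identity you record is exactly the device (borrowed in the paper from \cite[p.~485]{BCFR1}) of completing the $\theta$-integral to $(0,\infty)$ so that the classical Poisson kernel is reproduced exactly, and your normalisation $b_k^\lambda\int_0^\infty\sigma^{2\lambda-1}(1+\sigma^2)^{-(\lambda+m-k+1)}d\sigma=c_k$ does check out against the definition of $b_k^\lambda$; moreover your $R_1$, $R_2$, $R_3$ are, after summation in $k$, precisely the paper's three pieces $S_1$, $S_2$, $S_3$ (the $\sin\theta$ versus $\theta$ correction, the $2(1-\cos\theta)$ versus $\theta^2$ correction handled by the mean value theorem, and the tail over $\theta\in(\pi/2,\infty)$). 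Where you genuinely differ is in the organisation of the estimates and in where the logarithm comes from. The paper keeps the $v$-integral defining $\varphi^{\lambda,k}$ explicit and separates the case $m>k$ from the degenerate case $k=m=1$ (possible only when $0<\beta<1$), in which the inner $\theta$-integral converges only logarithmically; the factor $1+\log_+\bigl(yz/(t^2(1+v)^2+(y-z)^2)\bigr)$ then appears already in the pointwise kernel bound and is converted into $1+\log_+(z/|x-z|)$ by using $t\ge|x-y|$ on the cone. You instead absorb the $v$-integral into the uniform bounds $\varphi^{\lambda,k}(w)\le C(1+w)^{-(2\lambda+\beta+1)}$ and $|(\varphi^{\lambda,k})'(w)|\le C(1+w)^{-(2\lambda+\beta+2)}$ (both correct), which eliminates the case distinction in $k$, and the logarithm is generated afterwards by the $t$-integration over the borderline range $|x-z|\lesssim t\lesssim z$ of the cone, where your intermediate bound $C\,t^\beta z^{-1}(t+|y-z|)^{-\beta}$ yields $z^{-2}\int_{|x-z|}^{z}t^{-1}dt$. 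Both mechanisms produce the stated estimate (yours even with a square root of the logarithm), and your bookkeeping closes as long as, as you yourself note, the improved bound rather than the crude $t^\beta(t+|y-z|)^{-\beta-1}$ one is used on the whole range $t\lesssim z$ when $|x-z|\ll z$, the crude bound sufficing when $|x-z|\sim z$ and for $t\gtrsim z$; the paper's version buys a fully explicit pointwise estimate at the price of the case analysis, while yours is slightly more streamlined but shifts the delicacy to the region decomposition of the cone.
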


\begin{proof}
    We use the following decomposition
    \begin{align*}
        & P_t^{\mathfrak{B}_\lambda, 1}(y,z)
            = \frac{2\lambda (yz)^\lambda t}{\pi }\int_0^{\pi /2}\frac{(\sin \theta )^{2\lambda -1}-\theta ^{2\lambda -1}}{(t^2 + (y-z)^2+2yz(1-\cos \theta ))^{\lambda +1}}d\theta \\
        & \quad  +\frac{2\lambda (yz)^\lambda t}{\pi }\int_0^{\pi /2}\theta ^{2\lambda -1}\left(\frac{1}{(t^2 + (y-z)^2+2yz(1-\cos \theta ))^{\lambda +1}}\right.
            -\left.\frac{1}{(t^2 + (y-z)^2+yz\theta ^2)^{\lambda +1}}\right)d\theta\\
        & \quad + \frac{2\lambda (yz)^\lambda t}{\pi }\int_0^{\pi /2}\frac{\theta ^{2\lambda -1}}{(t^2 + (y-z)^2+yz\theta ^2)^{\lambda +1}}d\theta ,\quad t,y,z\in (0,\infty ).
    \end{align*}
    On the other hand, we get (see \cite[p. 485]{BCFR1}) for every $t,y,z\in (0,\infty )$,
    \begin{align*}
        \int_0^{\pi /2}\frac{\theta ^{2\lambda -1}}{(t^2 + (y-z)^2+yz\theta ^2)^{\lambda +1}}d\theta
            &= \left(\int_0^\infty -\int_{\pi /2}^\infty \right)\frac{\theta ^{2\lambda -1}}{(t^2 + (y-z)^2+yz\theta ^2)^{\lambda +1}}d\theta\\
        &  = \frac{\pi}{2\lambda (yz)^\lambda t} P_t(y-z)
                    -\int_{\pi /2}^\infty \frac{\theta ^{2\lambda -1}}{(t^2 + (y-z)^2+yz\theta ^2)^{\lambda +1}}d\theta.
    \end{align*}
    Hence,
    $$t^\beta \partial_t^\beta [P_t^{\mathfrak{B}_\lambda, 1}(y,z)-P_t(y-z)]
        =\sum_{j=1}^3S_j(y,z,t),\quad t,y,z\in (0,\infty ),$$
    where
    $$S_1(y,z,t)
        =t^\beta \partial _t^\beta \left(\frac{2\lambda (yz)^\lambda t}{\pi }\int_0^{\pi /2}\frac{(\sin \theta )^{2\lambda -1}-\theta ^{2\lambda -1}}{(t^2 + (y-z)^2+2yz(1-\cos \theta ))^{\lambda +1}}d\theta \right),$$
    \begin{align*}
        S_2(y,z,t)
            & =t^\beta \partial _t^\beta \left(\frac{2\lambda (yz)^\lambda t}{\pi }\int_0^{\pi /2}\frac{\theta ^{2\lambda -1}}{(t^2 + (y-z)^2+2yz(1-\cos \theta ))^{\lambda +1}}\right.\\
            & \qquad \qquad \left.-\frac{\theta ^{2\lambda -1}}{(t^2 + (y-z)^2+yz\theta ^2)^{\lambda +1}}d\theta\right),
    \end{align*}
    and
    $$ S_3(y,z,t)
        = - t^\beta \partial _t^\beta \left(\frac{2\lambda (yz)^\lambda t}{\pi }\int_{\pi /2}^\infty \frac{\theta ^{2\lambda -1}}{(t^2 + (y-z)^2+yz\theta ^2)^{\lambda +1}}d\theta\right).$$

    Assume that $0<x/2<z<2x$. We are going to analyze $S_1$, $S_2$ and $S_3$ separately.

    From (\ref{5.3}) we deduce that, for every $t \in (0,\infty)$,
    $$S_1(y,z,t)
        =\sum_{k=0}^{(m+1)/2}\frac{b_k^\lambda }{t^{2\lambda +1}}(yz)^\lambda \int_0^{\pi/2} [(\sin \theta )^{2\lambda -1}-\theta ^{2\lambda -1}]
            \varphi ^{\lambda ,k}\left(\frac{\sqrt{(y-z)^2+2yz(1-\cos \theta )}}{t}\right)d\theta,$$
    where $m \in \N$ is such that $m-1 \leq \beta < m$.
    Let $k\in \mathbb{N}$, $0\leq k\leq (m+1)/2$. Since
    $$|(\sin \theta)^{2\lambda -1}-\theta ^{2\lambda -1}|\leq C\theta ^{2\lambda +1}
        \quad \text{and} \quad
        |1-\cos \theta |\geq C\theta ^2, \qquad\theta \in [0,\pi /2],$$
    from \cite[p. 60--61]{MS} we obtain
    \begin{align*}
        &\left|\int_0^{\pi /2}\frac{(\sin \theta )^{2\lambda -1}-\theta ^{2\lambda -1}}{(t^2(1+v)^2+(y-z)^2+2yz(1-\cos \theta ))^{\lambda+m-k+1}}d\theta \right|\\
        &\qquad\leq C\int_0^{\pi /2}\frac{\theta ^{2\lambda +1}}{(t^2(1+v)^2+(y-z)^2+yz\theta ^2)^{\lambda+m-k+1}}d\theta \\
        &\qquad\leq  C\frac{(yz)^{-\lambda -1}}{(t^2(1+v)^2+(y-z)^2)^{m-k}}\int_0^{\frac{\pi}{2}\sqrt{\frac{yz}{t^2(1+v)^2+(y-z)^2}}}\frac{u^{2\lambda +1}}{(1+u^2)^{\lambda+m-k+1}}du\\
        &\qquad\leq C(yz)^{-\lambda -1}\left(\frac{\sqrt{\frac{yz}{t^2(1+v)^2+(y-z)^2}}}{1+ \sqrt{\frac{yz}{t^2(1+v)^2+(y-z)^2}}}\right)^{2\lambda +2}
        \left\{\begin{array}{ll}
        \displaystyle \frac{1}{(t^2(1+v)^2+(y-z)^2)^{m-k}},&m>k,\\
                &\\
        \displaystyle 1+\log _+\frac{yz}{t^2(1+v)^2+(y-z)^2},&m=k,
        \end{array}
        \right.\\
        &\qquad\leq \frac{C}{(t^2(1+v)^2+(y-z)^2+yz)^{\lambda +1}}
        \left\{\begin{array}{ll}
        \displaystyle \frac{1}{(t^2(1+v)^2+(y-z)^2)^{m-k}},&m>k,\\
                &\\
        \displaystyle 1+\log _+\frac{yz}{t^2(1+v)^2+(y-z)^2},&m=k,
        \end{array}    \right.
    \end{align*}
    for every $t,z,y\in (0,\infty )$. Notice that, since $0\leq k\leq (m+1)/2$; $k=m$ if, and only if, $k=m=1$.

    Suppose that $m>k$. We have that
    \begin{align*}
        & \left(\int_{\Gamma _+(x)}\left(\frac{(yz)^\lambda }{t^{2\lambda +1}}\int_0^{\pi /2}[(\sin \theta )^{2\lambda -1}-\theta ^{2\lambda -1}]
            \varphi^{\lambda,k}\left(\frac{\sqrt{(y-z)^2+2yz(1-\cos \theta)}}{t}\right)d\theta \right)^2\frac{dydt}{t^2}\right)^{1/2}\\
        & \qquad \leq C\int_0^\infty (1+v)^{m+1-2k}v^{m-\beta -1}\\
        &\qquad \qquad \times \left(\int_{\Gamma _+(x)}\left(\frac{(yz)^\lambda t^{2(m-k)+1}}{[t^2(1+v)^2 + (y-z)^2 +  yz]^{\lambda +1}[t^2(1+v)^2+(y-z)^2]^{m-k}}\right)^2\frac{dydt}{t^2}\right)^{1/2}dv\\
        & \qquad \leq C\int_0^\infty (1+v)^{m+1-2k}v^{m-\beta -1} \bigg[\left(\int_0^{z/2}+\int_{z/2}^{2z}+\int_{2z}^\infty \right) \\
        & \qquad \qquad \times \int_{|x-y|}^\infty \frac{(yz)^{2\lambda} t^{4(m-k)}}{[t^2(1+v)^2 + (y-z)^2 +  yz]^{2\lambda +2}[t^2(1+v)^2+(y-z)^2]^{2(m-k)}}dtdy \bigg]^{1/2}dv\\
        & \qquad = \sum_{j=1}^3I_j(x,z).
    \end{align*}
    We can write
    \begin{align*}
        &I_1(x,z)+I_3(x,z)\\
        &\leq C\int_0^\infty\frac{(1+v)^{m+1-2k}v^{m-\beta -1}}{(1+v)^{2(m-k)}}\left[\left(\int_0^{z/2}+\int_{2z}^\infty \right)\int_{|x-y|}^\infty
        \frac{(yz)^{2\lambda }}{(t^2(1+v)^2 + (y-z)^2 +  yz)^{2\lambda +2}}dtdy\right]^{1/2}dv\\
        &\leq C\int_0^\infty\frac{v^{m-\beta -1}}{(1+v)^{m-1}}\left[\left(\int_0^{z/2}+\int_{2z}^\infty \right)\int_{|x-y|}^\infty
        \frac{dtdy}{(|y-z|+t(1+v))^{4}}\right]^{1/2}dv\\
        &\leq C\int_0^\infty\frac{v^{m-\beta -1}}{(1+v)^{m-1/2}}\left[\left(\int_0^{z/2}+\int_{2z}^\infty \right)\int_{|x-y|(1+v)}^\infty \frac{dwdy}{(|y-z|+w)^{4}}\right]^{1/2}dv\\
        &\leq C\int_0^\infty\frac{v^{m-\beta -1}}{(1+v)^{m-1/2}}\left[\left(\int_0^{z/2}+\int_{2z}^\infty \right)\frac{dy}{(|y-z|+|x-y|(1+v))^{3}}\right]^{1/2}dv\\
        &\leq C\int_0^\infty\frac{v^{m-\beta -1}}{(1+v)^{m-1/2}}\left[\left(\int_0^{z/2}+\int_{2z}^\infty \right)\frac{dy}{(z+|x-y|(1+v))^{3}}\right]^{1/2}dv\\
        &\leq C\int_0^\infty\frac{v^{m-\beta -1}}{(1+v)^m}dv\left[\int_0^\infty \frac{du}{(z+u)^{3}}\right]^{1/2}\leq \frac{C}{z}.
    \end{align*}
    Also we get
    \begin{align*}
        I_2(x,z)
            &\leq C\int_0^\infty \frac{v^{m-\beta -1}}{(1+v)^{m-1}}\left[\int_{z/2}^{2z}\int_{|x-y|}^\infty \frac{(yz)^{2\lambda }}{(t^2(1+v)^2+|y-z|^2+yz)^{2\lambda +2}}dtdy\right]^{1/2}dv\\
            &\leq C\int_0^\infty \frac{v^{m-\beta -1}}{(1+v)^{m-1}}\left[\int_{z/2}^{2z}\int_{|x-y|}^\infty \frac{dtdy}{(\sqrt{yz}+t(1+v))^4}\right]^{1/2}dv\\
            &\leq C\int_0^\infty \frac{v^{m-\beta -1}}{(1+v)^{m-1/2}}\left[\int_{z/2}^{2z} \frac{dy}{(z+|x-y|(1+v))^3}\right]^{1/2}dv
                \leq \frac{C}{z}.
    \end{align*}
    Hence,
    $$\left(\int_{\Gamma _+(x)}\left(\frac{(yz)^\lambda }{t^{2\lambda +1}}\int_0^{\pi /2}[(\sin \theta )^{2\lambda -1}-\theta ^{2\lambda -1}]
        \varphi^{\lambda,k}\left(\frac{\sqrt{(y-z)^2+2yz(1-\cos \theta)}}{t}\right)d\theta \right)^2\frac{dydt}{t^2}\right)^{1/2}
        \leq \frac{C}{z}.$$

    Assume now $k=m=1$. Then, $0<\beta <1$. We have that
    \begin{align*}
        & \left(\int_{\Gamma _+(x)}\left(\frac{(yz)^\lambda }{t^{2\lambda +1}}\int_0^{\pi /2}[(\sin \theta )^{2\lambda -1}-\theta ^{2\lambda -1}]
            \varphi^{\lambda,1}\left(\frac{\sqrt{(y-z)^2+2yz(1-\cos \theta)}}{t}\right)d\theta \right)^2\frac{dydt}{t^2}\right)^{1/2}&&\\
        & \qquad \leq C\int_0^\infty v^{-\beta}\left[\int_{\Gamma _+(x)}\left(\frac{(yz)^\lambda t}{(t^2(1+v)^2 + (y-z)^2 +  yz)^{\lambda +1}}\left(1+\log _+\frac{yz}{t^2(1+v)^2+(y-z)^2}\right)\right)^2\frac{dydt}{t^2}\right]^{1/2}dv\\
        &\qquad \leq C\int_0^\infty v^{-\beta}\left[\left(\int_0^{z/2}+\int_{z/2}^{2z}+\int_{2z}^\infty \right)\int_{|x-y|}^\infty \frac{(yz)^{2\lambda}}{(t^2(1+v)^2 + (y-z)^2 +  yz)^{2\lambda +2}}\right.\\
        &\qquad \qquad \times \left. \left(1+\log _+\frac{yz}{t^2(1+v)^2+(y-z)^2}\right)^2dtdy\right]^{1/2}dv\\
        & \qquad = \sum_{j=1}^3 J_j(x,z).
    \end{align*}
    We can write
    \begin{align*}
        & J_1(x,z)+J_3(x,z)
            \leq C\int_0^\infty v^{-\beta}\left[\left(\int_0^{z/2}+\int_{2z}^\infty \right)\int_{|x-y|}^\infty \frac{(yz)^{2\lambda}}{(t^2(1+v)^2 + (y-z)^2 +  yz)^{2\lambda +2}}dtdy\right]^{1/2}dv\\
        & \qquad \leq C\int_0^\infty v^{-\beta}\left[\left(\int_0^{z/2}+\int_{2z}^\infty \right)\int_{|x-y|}^\infty \frac{1}{(|y-z|+t(1+v))^4}dtdy\right]^{1/2}dv
                \leq \frac{C}{z}\int_0^\infty \frac{v^{-\beta }}{1+v}dv
                \leq \frac{C}{z},
    \end{align*}
    and
    \begin{align*}
        J_2(x,z)
            \leq & C\int_0^\infty v^{-\beta}\left[\int_{z/2}^{2z}\int_{|x-y|}^\infty \frac{(yz)^{2\lambda}}{(t^2(1+v)^2 + (y-z)^2 +  yz)^{2\lambda +2}}\right.\\
            &\times \left.\left(1+\log_+\frac{yz}{t^2(1+v)^2+(y-z)^2}\right)^2dtdy\right]^{1/2}dv\\
            \leq &C\int_0^\infty v^{-\beta}\left[\int_{z/2}^{2z}\int_{|x-y|}^\infty  \frac{(yz)^{2\lambda}}{(t^2(1+v)^2 + (y-z)^2 +  yz)^{2\lambda +2}}\right.\\
            &\times \left.\left(1+\log_+\frac{yz}{t^2(1+v)^2+(x-y)^2+(y-z)^2}\right)^2dtdy\right]^{1/2}dv\\
            \leq & C\left(1+\log_+\frac{z}{|x-z|}\right)\int_0^\infty v^{-\beta}\left[\int_{z/2}^{2z}\int_{|x-y|}^\infty  \frac{1}{(z+t(1+v))^4}dtdy\right]^{1/2}dv\\
            \leq &\frac{C}{z}\left(1+\log_+\frac{z}{|x-z|}\right).
    \end{align*}
    Hence
    \begin{align*}
        & \left(\int_{\Gamma _+(x)}\left(\frac{(yz)^\lambda }{t^{2\lambda +1}}\int_0^{\pi /2}[(\sin \theta )^{2\lambda -1}-\theta ^{2\lambda -1}]
            \varphi ^{\lambda ,1}\left(\frac{\sqrt{(y-z)^2+2yz(1-\cos \theta )}}{t}\right)d\theta \right)^2\frac{dydt}{t^2}\right)^{1/2}\\
        & \qquad \qquad \leq \frac{C}{z}\left(1+\log_+\frac{z}{|x-z|}\right).
    \end{align*}
    We conclude that
    \begin{equation}\label{5.8}
        \left(\int_{\Gamma _+(x)}|S_1(y,z,t)|^2\frac{dydt}{t^2}\right)^{1/2}
            \leq \frac{C}{z}\left(1+\log_+\frac{z}{|x-z|}\right).
    \end{equation}

    Next we treat $S_2$.
    Let $k\in \mathbb{N}$, $0\leq k\leq (m+1)/2$. By using the mean value theorem we obtain
    \begin{align*}
        &\left|\frac{1}{((1+v)^2+\frac{(y-z)^2+2yz(1-\cos \theta )}{t^2})^{\lambda +m-k+1}}-\frac{1}{((1+v)^2+\frac{(y-z)^2+yz\theta ^2}{t^2})
        ^{\lambda +m-k+1}}\right|\\
        &\qquad = t^{2(\lambda +m-k+1)}\left |\frac{1}{(t^2(1+v)^2+(y-z)^2+2yz(1-\cos \theta ))^{\lambda +m-k+1}}\right.\\
        &\qquad \qquad \left. -\frac{1}{(t^2(1+v)^2+(y-z)^2+yz\theta ^2)^{\lambda +m-k+1}}\right|\\
        &\qquad \leq Ct^{2(\lambda +m-k+1)}\frac{yz\theta^4}{(t^2(1+v)^2+(y-z)^2+yz\theta ^2)^{\lambda +m-k+2}},
        \quad \theta \in (0,\pi/2), \ t,y\in (0,\infty ),
    \end{align*}
    because $|1-\cos \theta -\theta ^2/2|\leq C\theta ^4$, $\theta \in (0,\pi /2)$.
    Hence
    \begin{align*}
        & \left|\int_0^{\pi /2}\theta ^{2\lambda -1}\left(\frac{1}{((1+v)^2+\frac{(y-z)^2+2yz(1-\cos \theta )}{t^2})^{\lambda +m-k+1}}-\frac{1}{((1+v)^2+\frac{(y-z)^2+yz\theta ^2}{t^2})^{\lambda +m-k+1}}\right)d\theta \right|\\
        & \qquad \leq Ct^{2(\lambda +m-k+1)}\int_0^{\pi /2}\frac{\theta ^{2\lambda +1}}{(t^2(1+v)^2+(y-z)^2+yz\theta^2)^{\lambda +m-k+1}}d\theta,
        \quad t,y\in (0,\infty ).
    \end{align*}
    By proceeding as in the previous case we get
    \begin{equation}\label{5.9}
        \left(\int_{\Gamma _+(x)}|S_2(y,z,t)|^2\frac{dydt}{t^2}\right)^{1/2}\leq \frac{C}{z}\left(1+\log_+\frac{z}{|x-z|}\right).
    \end{equation}

    Finally we consider $S_3$. From (\ref{5.3}) it follows that
    $$S_3(y,z,t)
        =-\sum_{k=0}^{(m+1)/2}\frac{b_k^\lambda }{t^{2\lambda +1}}(yz)^\lambda \int_{\pi /2}^\infty \theta ^{2\lambda -1}
            \varphi ^{\lambda ,k}\left(\frac{\sqrt{(y-z)^2+yz\theta ^2}}{t}\right)d\theta ,\quad y,t\in (0,\infty ),$$
    where $m \in \N$ is such that $m-1 \leq \beta < m$.
    Let $k\in \mathbb{N}$, $0\leq k\leq (m+1)/2$. We can write
    \begin{align*}
        & \int_{\pi /2}^\infty \theta ^{2\lambda -1}\varphi ^{\lambda ,k}\left(\frac{\sqrt{(y-z)^2+yz\theta ^2}}{t}\right)d\theta\\
        & \qquad \leq \int_0^\infty (1+v)^{m+1-2k}v^{m-\beta -1}\int_{\pi /2}^\infty \frac{\theta ^{2\lambda -1}t^{2(\lambda +m-k+1)}}{(t^2(1+v)^2+(y-z)^2+yz\theta ^2)^{\lambda +m-k+1}}d\theta dv,\quad t,y\in (0,\infty ).
    \end{align*}
    Then,
    \begin{align*}
        & \left(\int_{\Gamma _+(x)}\left(\frac{(yz)^\lambda }{t^{2\lambda +1}}\int_{\pi /2}^\infty \theta ^{2\lambda -1}
            \varphi ^{\lambda ,k}\left(\frac{\sqrt{(y-z)^2+yz\theta ^2}}{t}\right)d\theta \right)^2\frac{dydt}{t^2}\right)^{1/2}\\
        & \qquad \leq \int_0^\infty (1+v)^{m+1-2k}v^{m-\beta -1}\int_{\pi /2}^\infty
            \left[\int_{\Gamma _+(x)}\frac{(yz)^{2\lambda} \theta ^{4\lambda -2}t^{4(m-k)}}{(t^2(1+v)^2+(y-z)^2+yz\theta ^2)^{2(\lambda +m-k+1)}}dydt\right]^{1/2}d\theta dv\\
        & \qquad \leq \int_0^\infty \frac{v^{m-\beta -1}}{(1+v)^{m-1}}\int_{\pi /2}^\infty \frac{1}{\theta}
            \left[\int_0^\infty\int_{|x-y|}^\infty \frac{dtdy}{(t(1+v)+|y-z|+\sqrt{yz}\theta)^4}\right]^{1/2}d\theta dv\\
        & \qquad \leq \int_0^\infty \frac{v^{m-\beta -1}}{(1+v)^{m-1/2}}\int_{\pi /2}^\infty \frac{1}{\theta}
            \left[\int_0^\infty \frac{dy}{((1+v)|x-y|+|y-z|+\sqrt{yz}\theta)^3}\right]^{1/2}d\theta dv\\
        & \qquad \leq \int_0^\infty \frac{v^{m-\beta -1}}{(1+v)^{m-1/2}}\int_{\pi /2}^\infty \frac{1}{\theta}
            \left[\left(\int_0^{z/4}+\int_{z/4}^\infty \right)\frac{dy}{((1+v)|x-y|+|y-z|+\sqrt{yz}\theta)^3}\right]^{1/2}d\theta dv.
    \end{align*}
    Since $x/2<z<2x$, if $y<z/4$, then $y<x/2$ and we have that
    \begin{align*}
        &\int_0^{z/4}\frac{dy}{((1+v)|x-y|+|y-z|+\sqrt{yz}\theta)^3}
            \leq C\int_0^{z/4}\frac{dy}{((1+v)x+z+\sqrt{yz}\theta)^3}\\
        & \qquad \leq C\int_0^{z/4}\frac{dy}{((1+v)^2x^2+z^2+yz\theta^2)^{3/2}}
        \leq \frac{C}{\theta ^2z((1+v)x+z)},\quad \theta \in (\pi/2,\infty)\mbox{ and }v\in (0,\infty ).
    \end{align*}
    Also we get
    \begin{align*}
        & \int_{z/4}^\infty \frac{dy}{((1+v)|x-y|+|y-z|+\sqrt{yz}\theta)^3}
            \leq C\int_{z/4}^\infty \frac{1}{((1+v)|x-y|+z\theta)^3}dy\\
        &\qquad \qquad \leq C\int_0^\infty \frac{dw}{((1+v)w+z\theta)^3}
            \leq \frac{C}{(1+v)(z \theta)^2},\quad \theta \in (\pi/2,\infty)\mbox{ and }v\in (0,\infty ).
    \end{align*}
    Hence,
    \begin{align*}
        &\left(\int_{\Gamma _+(x)}\left(\frac{(yz)^\lambda }{t^{2\lambda +1}}\int_{\pi /2}^\infty \theta ^{2\lambda -1}
            \varphi ^{\lambda ,k}\left(\frac{\sqrt{(y-z)^2+yz\theta ^2}}{t}\right)d\theta \right)^2\frac{dydt}{t^2}\right)^{1/2}\\
        & \qquad \qquad \leq C\int_0^\infty \frac{v^{m-\beta -1}}{(1+v)^{m-1/2}}\int_{\pi /2}^\infty \frac{1}{\theta ^2}\left(\frac{1}{z((1+v)x+z)}+\frac{1}{(1+v)z^2}\right)^{1/2}d\theta dv\\
        &\qquad \qquad \leq C\left(\frac{1}{\sqrt{z}}\int_0^\infty \frac{v^{m-\beta -1}}{(1+v)^{m-1/2}((1+v)x+z)^{1/2}}dv+\frac{1}{z}\int_0^\infty \frac{v^{m-\beta -1}}{(1+v)^m}dv\right)\\
        & \qquad \qquad \leq C\left(\frac{1}{\sqrt{zx}}+\frac{1}{z}\right)\int_0^\infty \frac{v^{m-\beta -1}}{(1+v)^m}dv
            \leq \frac{C}{z}.
    \end{align*}
    We have obtained that
    \begin{equation}\label{5.10}
        \left(\int_{\Gamma _+(x)}|S_3(y,z,t)|^2\frac{dydt}{t^2}\right)^{1/2}
            \leq \frac{C}{z}.
    \end{equation}
    By combining (\ref{5.8}), (\ref{5.9}) and (\ref{5.10}) we conclude the proof of this lemma.
\end{proof}

\begin{lema}\label{Lem5.4}
    Let $\B$ be a UMD Banach space, $1<p<\infty$ and $\beta, \lambda>0$. Then,
    $$\Big\|t^\beta \partial_t^\beta P_t^{\mathfrak{B}_\lambda}(f)\Big\|_{T^2_p((0,\infty),\B)}
        \leq C\|f\|_{L^p((0,\infty),\B)}, \quad f \in C_c^\infty(0,\infty) \otimes \B.$$
\end{lema}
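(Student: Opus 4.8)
The plan is to follow the scheme of Section~\ref{sec:Schrodinger}, with the critical radius replaced by $\rho(x)=x/2$ — the natural scale associated with the potential $V(x)=\lambda(\lambda-1)/x^{2}$ in the writing $B_\lambda=-d^2/dx^2+V$ — and with the estimates for the Schr\"odinger kernels replaced by Lemmas~\ref{Lem5.1}, \ref{Lem5.2} and~\ref{Lem5.3}. For $f=\sum_{j=1}^{m}b_jf_j\in C_c^\infty(0,\infty)\otimes\B$ set
$$K^{\mathfrak{B}_\lambda}(f)(x;y,t)=t^\beta\partial_t^\beta P_t^{\mathfrak{B}_\lambda}(f)(y)\chi_{\Gamma_+(x)}(y,t),\qquad \mathbb{K}(f)(x;y,t)=t^\beta\partial_t^\beta P_t(f)(y)\chi_{\Gamma_+(x)}(y,t),$$
where $P_t$ is the one-dimensional classical Poisson integral \eqref{eq:Poissinteg}, and let $K^{\mathfrak{B}_\lambda}_{loc}$, $\mathbb{K}_{loc}$ be the operators obtained by inserting $f\chi_{B_+(x,\rho(x))}$ in place of $f$. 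Since each $f_j$ is scalar and the vertical square function associated with $\mathfrak{B}_\lambda$ is bounded on $L^2(0,\infty)$, for a.e.\ $x\in(0,\infty)$ one has $K^{\mathfrak{B}_\lambda}(f)(x;\cdot,\cdot)\in H_+\otimes\B\subset\gamma(H_+,\B)$, and $x\mapsto K^{\mathfrak{B}_\lambda}(f)(x;\cdot,\cdot)\in\gamma(H_+,\B)$ is strongly measurable by \cite[Lemma~2.5 and Proposition~2.1]{NVW}, exactly as in Lemma~\ref{Lem3.1}. I would then decompose
$$K^{\mathfrak{B}_\lambda}(f)=D^{\mathfrak{B}_\lambda}(f)+\mathbb{K}_{loc}(f)+K^{\mathfrak{B}_\lambda}_{glob}(f),\qquad D^{\mathfrak{B}_\lambda}=K^{\mathfrak{B}_\lambda}_{loc}-\mathbb{K}_{loc},$$
and bound the $L^p((0,\infty),\gamma(H_+,\B))$-norm of each summand, using repeatedly the ideal inequality $\big\|\int_0^\infty \kappa(z)f(z)\,dz\big\|_{\gamma(H_+,\B)}\le\int_0^\infty\|\kappa(z)\|_{H_+}\|f(z)\|_\B\,dz$, valid for $H_+$-valued kernels $\kappa$, which transfers the scalar estimates of Lemmas~\ref{Lem5.1}--\ref{Lem5.3} to the $\B$-valued setting (the relevant $\kappa$ is integrable in $z$ near the diagonal for each of the pieces treated this way).

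For the global term, if $|x-z|\ge\rho(x)$ then $|x-z|\sim x+z$, so Lemma~\ref{Lem5.2} together with the bound $\|t^\beta\partial_t^\beta P_t^{\mathfrak{B}_\lambda,1}(y,z)\chi_{\Gamma_+(x)}(y,t)\|_{H_+}\le C/|x-z|$ yields $\|t^\beta\partial_t^\beta P_t^{\mathfrak{B}_\lambda}(y,z)\chi_{\Gamma_+(x)}(y,t)\|_{H_+}\le C/(x+z)$; hence $\|K^{\mathfrak{B}_\lambda}_{glob}(f)(x;\cdot,\cdot)\|_{\gamma(H_+,\B)}\le C\int_0^\infty\|f(z)\|_\B/(x+z)\,dz$, and the right-hand side defines a bounded operator on $L^p(0,\infty)$ by Hardy's inequality. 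For $\mathbb{K}_{loc}(f)$ I would copy the argument of Lemma~\ref{Lem3.3}: fix a covering $\{B_+(x_k,\rho(x_k))\}_k$ of $(0,\infty)$ with bounded overlap (available because $\rho(y)\sim\rho(x)$ whenever $|x-y|<\rho(x)$), write $\mathbb{K}_{loc}(f)=(\mathbb{K}_{loc}(f)-S(f))+S(f)$ with $S(f)(x;\cdot)=\sum_k\chi_{B_+(x_k,\rho(x_k))}(x)\,\mathbb{K}(f\chi_{B_+(x_k,2\rho(x_k))})(x;\cdot)$; the difference has kernel concentrated on $|x-z|\sim\rho(x)$ with $H_+$-norm $\le C/|x-z|\le C/\rho(x)$ by Lemma~\ref{Lem5.1}, hence is pointwise dominated by $C\mathcal{M}(\|f\|_\B)(x)$, while $\|S(f)\|_{L^p((0,\infty),\gamma(H_+,\B))}\le C\big(\sum_k\|f\chi_{B_+(x_k,2\rho(x_k))}\|^p_{L^p((0,\infty),\B)}\big)^{1/p}\le C\|f\|_{L^p((0,\infty),\B)}$, the first inequality using the bounded overlap and the $L^p((0,\infty),\B)\to T_p^2((0,\infty),\B)$ boundedness of the full classical operator $\mathbb{K}$. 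This last fact is Theorem~\ref{teo 1.1} with $n=1$ applied to the zero-extension of $f$, since restricting the cones from $\R\times(0,\infty)$ to $(0,\infty)^2$ does not increase the $\gamma$-norms.

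Finally, for $D^{\mathfrak{B}_\lambda}(f)=K^{\mathfrak{B}_\lambda}_{loc}(f)-\mathbb{K}_{loc}(f)$ one restricts to $z\in B_+(x,\rho(x))\subset(x/2,2x)$, so $z\sim x$, and splits $P_t^{\mathfrak{B}_\lambda}=P_t^{\mathfrak{B}_\lambda,1}+P_t^{\mathfrak{B}_\lambda,2}$. The contribution of $P_t^{\mathfrak{B}_\lambda,2}$ has $H_+$-norm $\le C/(x+z)\le C/\rho(x)$ by Lemma~\ref{Lem5.2}, hence is again dominated by $C\mathcal{M}(\|f\|_\B)(x)$; the contribution $K^{\mathfrak{B}_\lambda,1}_{loc}(f)-\mathbb{K}_{loc}(f)$, by Lemma~\ref{Lem5.3}, is dominated by $C\int_{x/2}^{2x}\|f(z)\|_\B\,z^{-1}\big(1+\log_+(z/|x-z|)\big)\,dz$, and the kernel $N(x,z)=z^{-1}\big(1+\log_+(z/|x-z|)\big)\chi_{(x/2,2x)}(z)$ satisfies $\sup_x\int_0^\infty N(x,z)\,dz<\infty$ and $\sup_z\int_0^\infty N(x,z)\,dx<\infty$ (both reduce, after the substitutions $z=x+xu$ resp.\ $x=z+zu$, to an integral of $1+\log_+|u|^{-1}$ over a bounded set), so Schur's test gives its boundedness on $L^p(0,\infty)$ for every $1<p<\infty$. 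Adding the three estimates and using that $\mathcal{M}$ is bounded on $L^p(0,\infty)$ gives $\|K^{\mathfrak{B}_\lambda}(f)\|_{L^p((0,\infty),\gamma(H_+,\B))}\le C\|f\|_{L^p((0,\infty),\B)}$, which is precisely the claim. I expect the main obstacle to be the term $K^{\mathfrak{B}_\lambda,1}_{loc}(f)-\mathbb{K}_{loc}(f)$ — taming the logarithmic bound of Lemma~\ref{Lem5.3} — together with the purely one-sided term $P_t^{\mathfrak{B}_\lambda,2}$, which has no Euclidean analogue and encodes the boundary point $0$; once Lemmas~\ref{Lem5.1}--\ref{Lem5.3} are granted, the remaining work is essentially the bookkeeping of the local/global splitting and the $L^p$-boundedness of the Hardy, Hardy--Littlewood maximal and Schur-type operators appearing above.
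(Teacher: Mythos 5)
Your route is genuinely different from the paper's. The paper does not perform a Schr\"odinger-style local/global splitting at all: it compares $t^\beta\partial_t^\beta P_t^{\mathfrak{B}_\lambda}(f)$ with the classical Poisson integral of the \emph{odd} extension $f_{\rm o}$ of $f$, so that Lemmas~\ref{Lem5.1}, \ref{Lem5.2} and \ref{Lem5.3} combine into a single scalar kernel bound $K(x,z)$ valid for all $x,z\in(0,\infty)$ (of Hardy type off the diagonal, logarithmic on $x/2<z<2x$); the associated positive operator is bounded on $L^p(0,\infty)$ by Hardy's inequalities and a Jensen argument with the probability measure $d\mu_x$, the difference is transferred to the $\gamma$-setting through $\gamma(H_+,\C)=H_+$, and the classical term is handled by the cone-restriction/orthonormal-extension argument plus Lemma~\ref{Lem2.1} with $n=1$. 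Your scheme instead imports the machinery of Section~\ref{sec:Schrodinger} (critical radius $\rho(x)\sim x$, covering lemma, maximal function, Schur test, Lemma~\ref{Lem3.3}-type treatment of $\mathbb{K}_{loc}$), replacing the Jensen step by Schur's test; that is workable and conceptually fine, but it is heavier than needed here, precisely because the comparison kernel estimates of Lemmas~\ref{Lem5.1}--\ref{Lem5.3} are global in $z$, so no localization of the classical operator is actually required.

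There is one concrete step in your write-up that fails as stated. With $\rho(x)=x/2$ the doubled intervals $Q_k^*=B_+(x_k,2\rho(x_k))=(0,2x_k)$ all reach the origin, so they are nested and every point of $(0,\infty)$ lies in infinitely many of them; hence the inequality $\sum_k\|f\chi_{Q_k^*}\|^p_{L^p((0,\infty),\B)}\le C\|f\|^p_{L^p((0,\infty),\B)}$, which you need to bound $S(f)$, is false for this family (bounded overlap holds for the $Q_k$ themselves, but not for their $2$-dilates, and this is what the summation step uses). The defect is local to your choice of scale: taking $\rho(x)=\varepsilon x$ with $\varepsilon$ small (say $x/8$), or replacing the dilation factor $2$ by a factor close to $1$, restores bounded overlap of the dilated intervals while keeping $B_+(x,\rho(x))\subset(x/2,2x)$ (so Lemma~\ref{Lem5.3} still applies), the comparability $|x-z|\sim\rho(x)$ on the symmetric differences, and $|x-z|\sim x+z$ on the global region; with that correction the rest of your argument (Hardy bound for the global part, maximal-function domination, Schur test for the logarithmic kernel, and Lemma~\ref{Lem2.1} with $n=1$ via zero extension and cone restriction) goes through.
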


\begin{proof}
    Assume that $f=\sum_{j=1}^n f_jb_j$, where $f_j\in C_c^\infty(0,\infty )$ and $b_j\in \B$, $j=1,\dots,n$.
    We denote by $f_{\rm o}$ the odd extension of $f$ to $\mathbb{R}$, that is,
    $$ f_{\rm o}(x)=\left\{\begin{array}{ll}
                f(x),&x>0,\\
                -f(-x),&x\leq 0.
                    \end{array}\right.$$

    We have that
    $$P_t(f_{\rm o})(y)
        =\int_{-\infty }^{+\infty }P_t(y-z)f(z)dz=\int_0^\infty (P_t(y-z)-P_t(y+z))f(z)dz,\quad y\in \mathbb{R}\mbox{ and }t>0.$$
    According to Lemmas~\ref{Lem5.1}, \ref{Lem5.2} and \ref{Lem5.3} it follows that
    $$\left\|t^\beta \partial _t^\beta[P_t^{\mathfrak{B}_\lambda }(y,z)-P_t(y-z)]\chi _{\Gamma _+(x)}(y,t)\right\|_{H_+}
        \leq C K(x,z),$$
    where
    $$K(x,z)
        =\left\{\begin{array}{ll}
        \displaystyle \frac{1}{x},&\displaystyle 0<z<\frac{x}{2},\\
        &\\
        \displaystyle \frac{1}{z}\Big(1+\log_+\frac{z}{|x-z|}\Big),&\displaystyle 0< x/2 <z<2x,\\
    &\\
        \displaystyle \frac{1}{z},&0<2x<z.
        \end{array}\right.$$
    For every $x\in (0,\infty )$,
    $$d\mu _x(z)
        =\frac{1}{C_0} \left(1+\log_+\frac{z}{|x-z|}\right) \chi_{(x/2,2x)}(z)\frac{dz}{z},$$
    is a probability measure on $(0,\infty)$, where
    $$C_0= \int_{1/2}^2 \left( 1 + \log_+ \frac{u}{|1-u|} \right) \frac{du}{u}.$$
    Then, by using Jensen inequality and by invoking Hardy inequalities (\cite[p. 20]{Zyg}), we deduce that
    \begin{align}\label{5.12}
        \left\|t^\beta \partial _t^\beta[P_t^{\mathfrak{B}_\lambda }(f)(y) - P_t(f_{\rm o})(y)] \chi _{\Gamma _+(x)}(y,t) \right\|_{L^p((0,\infty ), L^2((0,\infty )^2,\frac{dydt}{t^2}; \B))}
        & \leq C\|K(\|f\|_\B)\|_{L^p(0,\infty )}  \nonumber\\
        & \leq C\|f\|_{L^p((0,\infty ,\B)},
    \end{align}
    where
    $$K(g)(x)
        =\int_0^\infty K(x,z)g(z)dz,\quad x\in (0,\infty ),$$
    for every $g\in L^p(0,\infty )$.

    Note that, in particular, according to Lemma~\ref{Lem2.1},
    $t^\beta \partial _t^\beta P_t(f)\chi _{\Gamma _+(x)}\in L^q((0,\infty),H_+)\otimes \B$, $1<q<\infty$.

    It is clear that if $\{h_j\}_{j=1}^\ell $ is an orthonormal system in $H_+$ and,
    for every $j\in \mathbb{N}$, we define $\widetilde{h_j}$ by
    $$\widetilde{h_j}(y,t)=\left\{\begin{array}{ll}
                h_j(y,t),&y,t>0,\\
                0,&t>0, \ y \leq 0,
                \end{array}\right.$$
    then, $\{\widetilde{h_j}\}_{j=1}^\ell$ is an orthonormal system in $L^2(\mathbb{R}\times (0,\infty ),\frac{dydt}{t^2})$.
    Hence, since $\B$ is UMD we have that
    \begin{align*}
        & \Big\|t^\beta \partial _t^\beta P_t(f_{\rm o})(y)\chi _{\Gamma _+(x)}(y,t) \Big\|_{\gamma (H_+,\B)}
            = \sup \bigg(\E \Big\|\sum_{j=1}^\ell \gamma _j[t^\beta \partial _t^\beta P_t(f_{\rm o})(y)\chi _{\Gamma _+(x)}(y,t)](h_j)\Big\|_\B^2\bigg)^{1/2}\\
        & \qquad  = \sup \bigg(\E \Big\|\sum_{j=1}^\ell\gamma _j[t^\beta \partial _t^\beta P_t(f_{\rm o})(y)](\chi _{\Gamma(x)}(y,t)\widetilde{h_j}) \Big\|_\B^2 \bigg)^{1/2} \\
        & \qquad  \leq  \sup \bigg(\E \Big \|\sum_{j=1}^\ell \gamma _j[t^\beta \partial _t^\beta P_t(f_{\rm o})(y)](\chi _{\Gamma(x)}(y,t)e_j) \Big\|_\B^2 \bigg)^{1/2}\\
        & \qquad  = \Big\|t^\beta \partial _t^\beta P_t(f_{\rm o})(y)\chi _{\Gamma (x)}(y,t)\Big\|_{\gamma \Big(L^2(\mathbb{R}\times (0,\infty),\frac{dydt}{t^2}),\B\Big)},\quad \mbox{ a.e. }x\in (0,\infty ),
    \end{align*}
    where the two first supremum are taken over all orthonormal systems $\{h_j\}_{j=1}^\ell$ in $H_+$ and the last
    one over all orthonormal systems $\{e_j\}_{j=1}^\ell $ in $L^2(\mathbb{R}\times (0,\infty ),\frac{dydt}{t^2})$.

    By Lemma~\ref{Lem2.1},
    \begin{equation}\label{5.13}
        \|t^\beta \partial _t^\beta P_t(f_{\rm o})\|_{T_p^2((0,\infty ),\B)}
            \leq \|t^\beta \partial _t^\beta P_t(f_{\rm o})\|_{T_p^2(\mathbb{R},\B)} \leq C\|f_{\rm o}\|_{L^p(\mathbb{R},\B)}=C\|f\|_{L^p((0,\infty ),\B)}.
    \end{equation}
    Also, as in (\ref{5.12}), since $\gamma  (H_+,\mathbb{C})= H_+$, we get
    \begin{align}\label{5.14}
        &\|t^\beta \partial _t^\beta P_t^{\mathfrak{B}_\lambda }(f)(y)-t^\beta \partial _t^\beta P_t(f_{\rm o})(y)\|_{T_p^2((0,\infty ),\B)}\nonumber\\
        & \qquad =
        \left\|[t^\beta \partial _t^\beta P_t^{\mathfrak{B}_\lambda }(f)(y)-t^\beta \partial _t^\beta P_t(f_{\rm o})(y)]\chi _{\Gamma _+(x)}(y,t)\right\|_{L^p((0,\infty ), \gamma (H_+,\B))}\nonumber\\
        & \qquad \leq\left\|\int_0^\infty \|f(y)\|_\B\|t^\beta \partial _t^\beta [P_t^{\mathfrak{B}_\lambda }(f)(y)- P_t(f_{\rm o})(y)]\chi _{\Gamma _+(x)}(y,t)\|_
        {H_+}dy\right\|_{L^p(0,\infty )}\nonumber\\
        & \qquad \leq C\|K(\|f\|_\B)\|_{L^p(0,\infty )}\leq C\|f\|_{L^p((0,\infty ),\B)}.
    \end{align}
    Combining (\ref{5.13}) and (\ref{5.14}) we conclude the proof of this lemma.
\end{proof}

From Lemma~\ref{Lem5.4} and using the estimate below (Lemma~\ref{Lem5.5}), we can proceed as in the proof of
Lemma~\ref{Lem2.1} to obtain \eqref{5.1}.

\begin{lema}\label{Lem5.5}
    Let $\beta, \lambda >0$. Then,
    $$|t^\beta \partial _t^\beta P_t^{\mathfrak{B}_\lambda }(x,y)|
        \leq C\frac{t^\beta}{(t+|x-y|)^{\beta +1}},\quad t,x,y\in (0,\infty ).$$
\end{lema}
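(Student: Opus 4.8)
The plan is to use the representation formula (\ref{5.3}), which writes $t^\beta\partial_t^\beta P_t^{\mathfrak{B}_\lambda}(x,y)$ as a finite sum over $0\le k\le (m+1)/2$ of terms of the form $\frac{(xy)^\lambda}{t^{2\lambda+1}}\int_0^\pi(\sin\theta)^{2\lambda-1}\varphi^{\lambda,k}\bigl(\tfrac{\sqrt{(x-y)^2+2xy(1-\cos\theta)}}{t}\bigr)\,d\theta$, and to reduce the whole estimate to (a) a clean pointwise bound for $\varphi^{\lambda,k}$ and (b) an elementary scaling in the $\theta$–integral. Since there are only finitely many values of $k$ and $|b_k^\lambda|\le C$, it suffices to bound one such term.

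First I would prove that $\varphi^{\lambda,k}(z)\le C(1+z)^{-(2\lambda+\beta+1)}$ for $z>0$, with $C$ independent of $k$. As $\varphi^{\lambda,k}$ is decreasing in $z$, for $0<z\le 1$ this follows from $\varphi^{\lambda,k}(z)\le\varphi^{\lambda,k}(0)=\int_0^\infty v^{m-\beta-1}(1+v)^{-(m+1+2\lambda)}\,dv<\infty$, which is finite because $m-1\le\beta<m$. For $z\ge 1$ I would split the integral defining $\varphi^{\lambda,k}(z)$ at $v=z$: on $(0,z)$ use $(1+v)^2+z^2\ge z^2$ together with the sharp estimate $\int_0^z(1+v)^{m+1-2k}v^{m-\beta-1}\,dv\le Cz^{2m-2k-\beta+1}$ (the exponent is strictly positive since $2k\le m+1$ and $\beta<m$); on $(z,\infty)$ use $(1+v)^2+z^2\ge v^2$ and $(1+v)^{m+1-2k}\le(2v)^{m+1-2k}$ to reduce to $\int_z^\infty v^{-(2\lambda+\beta+2)}\,dv$. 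Both pieces are $\le Cz^{-(2\lambda+\beta+1)}$.

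Inserting this bound, combining with the prefactor $\frac{(xy)^\lambda}{t^{2\lambda+1}}$ (the powers of $t$ conveniently collapse to a single $t^\beta$), and using that $1-\cos\theta\ge c\,\theta^2$ on $[0,\pi]$ (so that $\sqrt{(x-y)^2+2xy(1-\cos\theta)}\ge c(|x-y|+\sqrt{xy}\,\theta)$), I would arrive at a bound of the form $C\,s^{2\lambda}t^\beta\int_0^\pi\frac{(\sin\theta)^{2\lambda-1}}{(R+s\theta)^{2\lambda+\beta+1}}\,d\theta$, where $R=t+|x-y|$ and $s=\sqrt{xy}$. On $[\pi/2,\pi]$ I would bound $R+s\theta$ below by $s\pi/2$ and by $R$ separately and use $\int_{\pi/2}^\pi(\sin\theta)^{2\lambda-1}\,d\theta<\infty$ (valid since $2\lambda-1>-1$), obtaining $\le Ct^\beta R^{-(\beta+1)}$. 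On $[0,\pi/2]$ I would use $(\sin\theta)^{2\lambda-1}\le C\theta^{2\lambda-1}$ and change variables $u=s\theta/R$; the powers of $s$ cancel exactly, leaving $t^\beta R^{-(\beta+1)}\int_0^{s\pi/(2R)}u^{2\lambda-1}(1+u)^{-(2\lambda+\beta+1)}\,du\le Ct^\beta R^{-(\beta+1)}$, with the last integral convergent at $0$ (since $2\lambda-1>-1$) and at $\infty$ (since $\beta+2>1$). Summing the finitely many $k$–terms then yields $|t^\beta\partial_t^\beta P_t^{\mathfrak{B}_\lambda}(x,y)|\le Ct^\beta(t+|x-y|)^{-(\beta+1)}$.

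I expect the main obstacle to be getting the two estimates with the correct exponents rather than merely convergent ones: for $\varphi^{\lambda,k}$ at large $z$ one must use $\int_0^z(1+v)^{m+1-2k}v^{m-\beta-1}\,dv\le Cz^{2m-2k-\beta+1}$ and not the wasteful $Cz^{m+2-2k}$, which would produce the wrong power of $z$ and break the final bound; and the small-$\theta$ part of the $\theta$–integral genuinely needs the scaling $u=s\theta/R$, since splitting $(R+s\theta)^{2\lambda+\beta+1}$ into separate powers of $R$ and of $s\theta$ either makes the integral diverge at $\theta=0$ or leaves an uncontrolled positive power of $xy$ that blows up as $xy\to\infty$. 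The remaining computations are routine.
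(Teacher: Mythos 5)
Your proof is correct and follows essentially the same route as the paper's: both start from the representation \eqref{5.3} and rely on $1-\cos\theta\gtrsim\theta^2$, $(\sin\theta)^{2\lambda-1}\lesssim\theta^{2\lambda-1}$ and a scaling argument that makes $(xy)^\lambda$ cancel and the powers of $t$ collapse to $t^\beta$. The only (cosmetic) difference is the order of the iterated integrals: the paper estimates the $\theta$-integral first, inside the $v$-integral defining $\varphi^{\lambda,k}$, whereas you first extract the pointwise decay $\varphi^{\lambda,k}(z)\leq C(1+z)^{-(2\lambda+\beta+1)}$ and then scale the $\theta$-integral; both bookkeepings yield the stated bound.
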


\begin{proof}
    Let $m \in \N$ such that $m-1 \leq \beta < m$ and $k\in \mathbb{N}$, verifying $0\leq k\leq (m+1)/2$.
    According to the formula (\ref{5.3}), it is enough to estimate the following expression,
    \begin{align*}
        &\left|\frac{(xy)^\lambda }{t^{2\lambda +1}}\int_0^\pi (\sin \theta )^{2\lambda -1}
            \varphi ^{\lambda ,k}\left(\frac{\sqrt{(x-y)^2+2xy(1-\cos \theta )}}{t}\right)d\theta \right|\\
        &\qquad \leq \frac{(xy)^\lambda }{t^{2\lambda +1}}\int_0^\pi (\sin \theta )^{2\lambda -1}\int_0^\infty \frac{(1+v)^{m+1-2k}v^{m-\beta -1}}{((1+v)^2+\frac{(x-y)^2+2xy(1-\cos \theta )}{t^2})^{\lambda +m-k+1}}dvd\theta\\
        &\qquad \leq (xy)^\lambda t^{2m-2k+1}\int_0^\pi (\sin \theta )^{2\lambda -1}\int_0^\infty \frac{(1+v)^{m+1-2k}v^{m-\beta -1}
        }{(t^2(1+v)^2+(x-y)^2+2xy(1-\cos \theta ))^{\lambda +m-k+1}}dvd\theta\\
        &\qquad \leq C(xy)^\lambda t^{2m-2k+1}\int_0^{\pi/2}(\sin \theta )^{2\lambda -1}\int_0^\infty \frac{(1+v)^{m+1-2k}v^{m-\beta -1}}{(t^2(1+v)^2+(x-y)^2+2xy(1-\cos \theta ))^{\lambda +m-k+1}}dvd\theta\\
        &\qquad \leq C(xy)^\lambda t^{2m-2k+1}\int_0^\infty (1+v)^{m+1-2k}v^{m-\beta -1}\int_0^{\pi/2}\frac{\theta ^{2\lambda -1}}{(t^2(1+v)^2+(x-y)^2+xy\theta ^2)^{\lambda +m-k+1}}d\theta dv\\
        &\qquad \leq Ct^{2m-2k+1}\int_0^\infty \frac{(1+v)^{m+1-2k}v^{m-\beta -1}}{(t(1+v)+|x-y|)^{2m-2k+2}}dv\\
        &\qquad \leq Ct^m\int_0^\infty \frac{v^{m-\beta -1}}{(tv+t+|x-y|)^{m+1}}dv\leq C\frac{t^\beta}{(t+|x-y|)^{\beta +1}},\quad t,x,y\in (0,\infty ).
    \end{align*}
\end{proof}

\subsection{}\label{subsec:Bess2}
Now we are going to show that
$$\|f\|_{L^p((0,\infty ), \B)}
    \leq C\|t^\beta \partial _t^\beta P_t^{\mathfrak{B}_\lambda }(f)\|_{T_p^2((0,\infty ),\B)},\quad f\in L^p((0,\infty ), \B).$$

If $f\in C_c^\infty (0,\infty )$ then, according to \cite[Lemma 3.1]{BCR3} we have that
$$h_\lambda (t^\beta \partial _t^\beta P_t^{\mathfrak{B}_\lambda }(f))(x)
    =e^{i\pi \beta }(tx)^\beta e^{-tx}h_\lambda (f)(x),\quad t,x\in (0,\infty ).$$
Suppose that $f,g\in C_c^\infty (0,\infty )$. We denote by
$J_t(y)=\{x\in (0,\infty ):|x-y|<t\}$, $t, y \in  (0,\infty )$.
Plancherel equality for Hankel transform leads to
\begin{align}\label{5.15}
    & \int_0^\infty \int_{\Gamma _+(x)}t^\beta \partial _t^\beta P_t^{\mathfrak{B}_\lambda }(f)(y)t^\beta \partial _t^\beta P_t^{\mathfrak{B}_\lambda }(g)(y)\frac{dydt}{t|J_t(y)|}dx \nonumber\\
    & \qquad = \int_0^\infty \int_0^\infty
        h_\lambda [e^{i\pi \beta} (tz)^\beta e^{-tz}h_\lambda(f)](y)
        h_\lambda [e^{i\pi \beta} (tz)^\beta e^{-tz}h_\lambda(g)](y)\int_{J_t(y)}dx\frac{dy}{t|J_t(y)|}dt\nonumber\\
    &\qquad =\int_0^\infty \int_0^\infty e^{2i\pi \beta }(ty)^{2\beta }e^{-2ty}h_\lambda (f)(y)h_\lambda (g)(y)\frac{dydt}{t}\nonumber\\
    &\qquad =e^{2i\pi \beta }\int_0^\infty h_\lambda (f)(y)h_\lambda (g)(y)y^{2\beta}\int_0^\infty e^{-2ty}t^{2\beta -1}dtdy\nonumber\\
    &\qquad = \frac{e^{2i\pi \beta }\Gamma (2\beta)}{2^{2\beta }}\int_0^\infty h_\lambda (f)(y)h_\lambda (g)(y)dy
        =\frac{e^{2i\pi \beta }\Gamma (2\beta)}{2^{2\beta }}\int_0^\infty f(x)g(x)dx.
\end{align}
Suppose now that $f\in C_c^\infty (0,\infty )\otimes \B$ and $g\in C_c^\infty (0,\infty )\otimes \B^*$. From (\ref{5.15}) it follows that
$$\int_0^\infty \langle g(x), f(x)\rangle_{\B^*,\B}dx
    =\frac{e^{-2i\pi \beta }2^{2\beta }}{\Gamma (2\beta)}\int_0^\infty \int_{\Gamma _+(x)}\langle t^\beta \partial _t^\beta P_t^{\mathfrak{B}_\lambda }(g)(y), t^\beta \partial _t^\beta P_t^{\mathfrak{B}_\lambda }(f)(y)\rangle_{\B^*,\B}\frac{dydt}{t|J_t(y)|}dx.$$
Then, since $|J_t(y)|\geq t$, for every $t, y \in  (0,\infty )$, according to \cite[Proposition 2.2]{HW} we get
\begin{align*}
    & \left|\int_0^\infty \langle g(x), f(x)\rangle _{\B^*,\B}dx\right|
        \leq C\int_0^\infty \int_{\Gamma _+(x)}|\langle t^\beta \partial _t^\beta P_t^{\mathfrak{B}_\lambda }(g)(y), t^\beta \partial _t^\beta P_t^{\mathfrak{B}_\lambda }(f)(y)\rangle_{\B^*,\B}|\frac{dydt}{t^2}dx\\
    & \qquad \leq C\int_0^\infty   \|t^\beta \partial _t^\beta P_t^{\mathfrak{B}_\lambda }(f)(y)\chi _{\Gamma_+(x)}(y,t)\|_{\gamma (H_+,\B)}\|t^\beta \partial _t^\beta P_t^{\mathfrak{B}_\lambda }(g)(y)\chi _{\Gamma_+(x)}(y,t)\|_{\gamma (H_+,\B^*)}                               dx\\
    & \qquad \leq C\|t^\beta \partial _t^\beta P_t^{\mathfrak{B}_\lambda }(f)\|_{T_p^2((0,\infty ),\B)}\|t^\beta \partial _t^\beta P_t^{\mathfrak{B}_\lambda }(g)\|_{T_{p'}^2((0,\infty ),\B)},
\end{align*}
where
$p'=p/(p-1)$.

Since $\B^*$ is UMD, by using (\ref{5.1}) where $p$ is replaced by $p'$, we obtain
$$\left|\int_0^\infty \langle g(x),f(x)\rangle _{\B^*,\B}dx\right|
    \leq C\|t^\beta \partial _t^\beta P_t^{\mathfrak{B}_\lambda }(f)\|_{T_p^2((0,\infty ),\B)})||g||_{L^{p'}((0,\infty ),\B^*)}.$$
From \cite[Lemma 2.3]{GLY} and by taking into account that $C_c^\infty (0,\infty )\otimes \B^*$ is dense in $L^{p'}((0,\infty ), \B^*)$ we get
$$\|f\|_{L^p((0,\infty ),\B)}
    \leq C\|t^\beta \partial _t^\beta P_t^{\mathfrak{B}_\lambda }(f)\|_{T_p^2((0,\infty ),\B)},\quad f\in C_c^\infty (0,\infty )\otimes \B.$$
By using again (\ref{5.1}) and the fact that $C_c^\infty (0,\infty )\otimes \B$ is dense in $L^p((0,\infty ),\B)$ we conclude that
$$\|f\|_{L^p((0,\infty ),\B)}
    \leq C\|t^\beta \partial _t^\beta P_t^{\mathfrak{B}_\lambda }(f)\|_{T_p^2((0,\infty ),\B)},\quad f\in L^p((0,\infty ,\B).$$

\section{Proof of Theorem \ref{teo 1.4}}\label{sec:Laguerre}

\subsection{}
We are going to show that
\begin{equation}\label{6.1}
    \|t^\beta \partial _t^\beta P_t^{\mathcal{L}_\alpha}(f)\|_{T_p^2((0,\infty ),\B)}
        \leq C  \|f\|_{L^p((0,\infty ),\B)}, \quad f\in L^p((0,\infty ,\B).
\end{equation}

We have that
$$L_\alpha =-\frac{d^2}{dx^2}+V(x), \quad x\in (0,\infty ),$$
where $V(x)=x^2 + (\alpha ^2-1/4)/x^2$.
Then, $\partial _t P_t^{\mathcal{L}_\alpha}(1)\not=0$ and \cite[Theorem 4.8]{HNP} can not be applied to prove (\ref{6.1}).
The strategy will be the same as in previous sections: comparing with an operator whose boundedness property is already known.
This time we are going to relate the operator $t^\beta \partial _t^\beta P_t^{\mathcal{L}_\alpha}$ with
$t^\beta \partial _t^\beta P_t^{\mathcal{H}}$, studied in Section~\ref{sec:Hermite}.

Recall \eqref{eq:PoissLag} for the definition of the Poisson Laguerre semigroup.
It is given via subordination with respect to the heat Laguerre semigroup $W_t^{\mathcal{L}_\alpha}$,
in which it is involved the modified Bessel function $I_\alpha$ (see \eqref{eq:heatLag}).
We will use the following properties of $I_\alpha$, that can be encountered in \cite[Ch. 5]{Leb},
\begin{eqnarray}
    I_\alpha (z)\sim \frac{z^\alpha }{2^\alpha \Gamma (\alpha +1)},\quad \mbox{ as }z\rightarrow 0^+,&&\label{6.2}\\
    \sqrt{z}I_\alpha (z)=\frac{e^z}{\sqrt{2\pi }}\left(1+O\Big(\frac{1}{z}\Big)\right),\quad z\in (0,\infty).&&\label{6.3}
\end{eqnarray}
From (\ref{6.2}) and (\ref{6.3}) we easily deduce that
\begin{equation}\label{6.4}
    0\leq W_t^{\mathcal{L}_\alpha}(x,y)
        \leq C \frac{e^{-c(x-y)^2/t}}{\sqrt{t}},\quad t,x,y\in (0,\infty ).
\end{equation}

Let $f\in C_c^\infty (0,\infty )\otimes \B$. We define the measurable function $f_{0}$ by
$$f_0(x)=\left\{
    \begin{array}{ll}
        f(x),&x>0\\
        0,&x\leq0.
    \end{array}\right.$$
By using \eqref{3.3}, \eqref{4.2}, \eqref{6.4} and H\"older's inequality, it is not difficult to justify that
$$t^\beta \partial _t^\beta P_t^{\mathcal{L}_\alpha}(f)(y)
    =\int_0^\infty t^\beta \partial _t^\beta P_t^{\mathcal{L}_\alpha}(y,z)f(z)dz,\quad t, y \in  (0,\infty ),$$
and
$$t^\beta \partial _t^\beta P_t^\mathcal{H}(f_0)(y)
    =\int_0^\infty t^\beta \partial _t^\beta P_t^\mathcal{H}(y,z)f(z)dz,\quad t, y \in  (0,\infty ).$$
Here, $P_t^\mathcal{H}(x,y)$ denotes the Poisson kernel associated with the Hermite operator on $\mathbb{R}$.

For every $x\in (0,\infty )$, we write the following decomposition
\begin{align*}
    & t^\beta \partial _t^\beta [P_t^{\mathcal{L}_\alpha }(f)(y) - P_t^\mathcal{H}(f_0)(y)] \chi_{\Gamma_+(x)}(y,t)\\
    & \qquad =  \chi_{\Gamma_+(x)}(y,t) \left(\int_0^{x/2}+\int_{2x}^\infty +\int_{x/2}^{2x}\right)
        t^\beta \partial _t^\beta [P_t^{\mathcal{L}_\alpha }(y,z) - P_t^\mathcal{H}(y,z)] f(z)dz\\
    & \qquad = K_{glob}^1(f)(x,y,t) + K_{glob}^2(f)(x,y,t) + K_{{\rm loc}}(f)(x,y,t), \quad t, y \in  (0,\infty ).
\end{align*}

Next, we analyze the boundedness properties of each of the above operators.

\begin{lema}\label{Lem6.1}
    Let $\B$ be a Banach space and $1<p<\infty$. Then, for each $j=1,2$,
    $$\| K^j_{glob}(f) \|_{L^p((0,\infty),\gamma(H_+,\B))}
        \leq C \|f\|_{L^p((0,\infty),\B)}, \quad f \in C_c^\infty(0,\infty) \otimes \B.$$
\end{lema}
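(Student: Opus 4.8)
The plan is to retain the $\gamma$-radonifying bookkeeping of Lemma~\ref{Lem3.1}, but to replace its critical-radius estimates by the much cruder size bounds that already suffice here. First I would record the one-dimensional analogue of Lemma~\ref{Lem5.5} for the Laguerre Poisson kernel: differentiating the subordination formula \eqref{eq:PoissLag} under the integral sign, estimating $\partial_t^\beta[te^{-t^2/4u}]$ by \eqref{3.3} and $W_u^{\mathcal{L}_\alpha}(y,z)$ by \eqref{6.4}, and integrating in $u$, one gets
\[
|t^\beta\partial_t^\beta P_t^{\mathcal{L}_\alpha}(y,z)|
    \le C\,\frac{t^\beta}{(t+|y-z|)^{\beta+1}},\qquad t,y,z\in(0,\infty),
\]
and the same computation, now with \eqref{4.2} in place of \eqref{6.4}, gives the identical bound for $t^\beta\partial_t^\beta P_t^{\mathcal{H}}(y,z)$. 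By the triangle inequality the difference $t^\beta\partial_t^\beta[P_t^{\mathcal{L}_\alpha}(y,z)-P_t^{\mathcal{H}}(y,z)]$ obeys the same estimate, and then the cone computation carried out in the proof of Lemma~\ref{Lem5.1} (with $n=1$) yields
\[
\big\|t^\beta\partial_t^\beta[P_t^{\mathcal{L}_\alpha}(y,z)-P_t^{\mathcal{H}}(y,z)]\chi_{\Gamma_+(x)}(y,t)\big\|_{H_+}
    \le \frac{C}{|x-z|},\qquad x,z\in(0,\infty).
\]

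With $f=\sum_{j=1}^m b_jf_j\in C_c^\infty(0,\infty)\otimes\B$, each $K^j_{glob}(f_j)(x;\cdot,\cdot)$ lies in $H_+$, so $K^j_{glob}(f)(x;\cdot,\cdot)$ is a finite rank operator from $H_+$ into $\B$, hence in $\gamma(H_+,\B)$; its strong measurability in $x$ follows, as in Lemma~\ref{Lem3.1}, from \cite[Lemma 2.5]{NVW} and Pettis' measurability theorem (\cite[Proposition 2.1]{NVW}). Writing the action of $K^j_{glob}(f)(x;\cdot,\cdot)$ as an integral against $f(z)$ over $z\in(0,\infty)$, Minkowski's inequality (for the $L^2(\Omega;\B)$ norm) together with the previous display gives
\[
\|K^j_{glob}(f)(x;\cdot,\cdot)\|_{\gamma(H_+,\B)}
    \le C\int_{R_j(x)}\frac{\|f(z)\|_\B}{|x-z|}\,dz,\qquad x\in(0,\infty),
\]
where $R_1(x)=(0,x/2)$ and $R_2(x)=(2x,\infty)$. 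On $R_1(x)$ one has $|x-z|\ge x/2$, so the right-hand side is $\le (C/x)\int_0^x\|f(z)\|_\B\,dz$; on $R_2(x)$ one has $|x-z|=z-x>z/2$, so it is $\le C\int_x^\infty\|f(z)\|_\B\,z^{-1}\,dz$. The classical Hardy inequalities on $(0,\infty)$ (\cite[p. 20]{Zyg}) bound both expressions in $L^p(0,\infty)$ by $C\|\,\|f\|_\B\,\|_{L^p(0,\infty)}=C\|f\|_{L^p((0,\infty),\B)}$, and since $C_c^\infty(0,\infty)\otimes\B$ is dense in $L^p((0,\infty),\B)$ this proves the lemma for $j=1,2$.

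The only slightly delicate point is the behaviour of the two kernels near the origin: unlike in the Schr\"odinger global part of Lemma~\ref{Lem3.1}, the difference $P_t^{\mathcal{L}_\alpha}(y,z)-P_t^{\mathcal{H}}(y,z)$ does \emph{not} enjoy an extra decay gain there, because $I_\alpha(0)=0$ forces $P_t^{\mathcal{L}_\alpha}(y,z)\to0$ as $y\to0$ while $P_t^{\mathcal{H}}(y,z)$ does not, so the difference is of the same order as the Hermite (equivalently, classical) Poisson kernel itself. What rescues the argument is purely geometric: on $R_1(x)$ and $R_2(x)$ one has $|x-z|\gtrsim\max(x,z)$, which turns the bound $C/|x-z|$ into a one-dimensional Hardy operator, and it is the $L^p$-boundedness of that operator — rather than of the Hardy--Littlewood maximal function used in the Schr\"odinger case — that closes the estimate. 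Thus establishing the pointwise bound on $t^\beta\partial_t^\beta P_t^{\mathcal{L}_\alpha}(y,z)$ (the Laguerre counterpart of Lemma~\ref{Lem5.5}) and the accompanying cone estimate are the main, though routine, technical tasks; everything else is a transcription of Lemma~\ref{Lem3.1} with $n=1$.
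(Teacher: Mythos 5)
Your argument is correct and follows essentially the same route as the paper: the crude pointwise bound $Ct^\beta(t+|y-z|)^{-\beta-1}$ for $t^\beta\partial_t^\beta[P_t^{\mathcal{L}_\alpha}(y,z)-P_t^{\mathcal{H}}(y,z)]$ (the paper applies the triangle inequality to the heat kernels inside the subordination integral using \eqref{3.3}, \eqref{4.2} and \eqref{6.4}, which is the same estimate you obtain kernel by kernel), then the cone computation giving the $H_+$-bound $C/|x-z|$, and finally the observation that on $(0,x/2)$ and $(2x,\infty)$ this is dominated by the two one-dimensional Hardy operators, whose $L^p$-boundedness closes the proof. The only difference is cosmetic (where the triangle inequality is applied), so nothing further is needed.
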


\begin{proof}
  Let $f\in  C_c^\infty(0,\infty) \otimes \B$. By using (\ref{3.3}), (\ref{4.2}) and (\ref{6.4}) we deduce that
    \begin{align}\label{6.55}
        & |t^\beta \partial _t^\beta [P_t^{\mathcal{L}_\alpha }(y,z)- P_t^\mathcal{H}(y,z)]|
            \leq C\int_0^\infty \frac{|t^\beta \partial _t^\beta [te^{-t^2/4u}]|}{u^{3/2}}|W_u^{\mathcal{L}_\alpha }(y,z)-W_u^\mathcal{H}(y,z)|du\nonumber\\
        & \qquad \leq C t^\beta \int_0^\infty \frac{e^{-c(t^2+(y-z)^2)/u}}{u^{(\beta +3)/2}}du\leq C\frac{t^\beta }{(t+|y-z|)^{\beta +1}},\quad  t,y,z\in (0,\infty ).
    \end{align}
    Hence we obtain
    \begin{align}\label{32.1}
        & \Big\|t^\beta \partial _t^\beta [P_t^{\mathcal{L}_\alpha }(y,z) -  P_t^\mathcal{H}(y,z))]\chi_{\Gamma _+(x)}(y,t)\Big\|_{H_+}
            \leq C\left(\int_0^\infty \int_{|x-y|<t}\frac{t^{2\beta -2}}{(t+|y-z|)^{2\beta +2}}dydt\right)^{1/2} \nonumber \\
        & \qquad \leq C\left(\int_0^\infty  \int_{|x-y|<t}\frac{t^{2\beta -2}}{(t+|x-y|+|y-z|)^{2\beta +2}}dydt\right)^{1/2} \nonumber \\
        & \qquad \leq C\left(\int_0^\infty \int_{|x-y|<t}\frac{t^{2\beta -2}}{(t+|x-z|)^{2\beta +2}}dydt\right)^{1/2}\leq C\left(\int_0^\infty
            \frac{t^{2\beta -1}}{(t+|x-z|)^{2\beta +2}}dt\right)^{1/2} \nonumber \\
        & \qquad \leq \frac{C}{|x-z|}\leq C\left\{\begin{array}{ll}
                            \displaystyle \frac{1}{z},& 0<2x<z, \\
                            &\\
                            \displaystyle \frac{1}{x},& \displaystyle 0<z<\frac{x}{2}.
                            \end{array}  \right.
    \end{align}
    Then, since $\gamma (H_+,\mathbb{C})=H_+$, we get
    \begin{align*}
        &\|K_{glob}^1(f)(x,y,t)\|_{\gamma (H_+,\B)} + \|K_{glob}^2(f)(x,y,t)\|_{\gamma (H_+,\B)} \\
        & \qquad \leq C \Big( \int_0^{x/2} + \int_{2x}^\infty \Big)\Big\|t^\beta \partial _t^\beta [P_t^{\mathcal{L}_\alpha }(y,z)- P_t^\mathcal{H}(y,z)]\chi_{\Gamma _+(x)}(y,t)\Big\|_{H_+}\|f(z)\|_Bdz\\
        & \qquad \leq C \Big( \frac{1}{x}\int_0^{x/2}\|f(z)\|_Bdz + \int_{2x}^\infty \frac{\|f(z)\|_\B}{z}dz \Big) ,\quad x\in (0,\infty ).
    \end{align*}
    By using Hardy inequalities (\cite[p. 20]{Zyg}) we conclude that
    \begin{equation*}\label{6.6}
        \|K_{glob}^j(f)\|_{L^p((0,\infty),\gamma (H_+,\B))}
            \leq C\|f\|_{L^p((0,\infty ),\B)},\quad j=1,2.
    \end{equation*}
\end{proof}

\begin{lema}\label{Lem6.2}
    Let $\B$ be a Banach space and $1<p<\infty$. Then,
    $$\| K_{loc}(f) \|_{L^p((0,\infty),\gamma(H_+,\B))}
        \leq C \|f\|_{L^p((0,\infty),\B)}, \quad f \in C_c^\infty(0,\infty) \otimes \B.$$
\end{lema}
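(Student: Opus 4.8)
The plan is to reproduce, for the Laguerre--Hermite pair, the scheme used for the local parts in Lemmas~\ref{Lem3.2} and~\ref{Lem5.3}: first produce a pointwise bound for the difference kernel on the region $x/2<z<2x$ that improves on the crude estimate \eqref{6.55}; then convert it into a bound for the $H_+$-norm of the Hardy type appearing in Lemma~\ref{Lem5.4}; and finally pass from $H_+$ to $\gamma(H_+,\B)$ and integrate against $\|f(z)\|_\B$.

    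Fix $f=\sum_{j=1}^n b_jf_j$ with $f_j\in C_c^\infty(0,\infty)$ and $b_j\in\B$, so that $K_{loc}(f)(x;\cdot,\cdot)=\sum_{j=1}^n b_j\,K_{loc}(f_j)(x;\cdot,\cdot)$, each $K_{loc}(f_j)(x;\cdot,\cdot)$ lying in $H_+$. The first and main step is a sharp comparison of the two heat kernels on the local region. Writing $a=e^{-2u}$ and $\xi=\frac{2yza}{1-a^2}$, inserting the asymptotics \eqref{6.2} and \eqref{6.3} of $I_\alpha$ into \eqref{eq:heatLag}, and comparing with the explicit Mehler kernel for $\mathcal H$ recalled in Section~\ref{sec:Hermite}, one checks that the exponential factors of $W_u^{\mathcal L_\alpha}(y,z)$ and $W_u^{\mathcal H}(y,z)$ coincide, so that $W_u^{\mathcal L_\alpha}(y,z)=W_u^{\mathcal H}(y,z)(1+O(1/\xi))$ when $\xi$ is large, while for bounded $\xi$ the two kernels are separately controlled by the Gaussian bounds \eqref{4.2}, \eqref{4.3} and \eqref{6.4}. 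Since $yz\sim z^2$ for $x/2<z<2x$, this produces an estimate for $|W_u^{\mathcal L_\alpha}(y,z)-W_u^{\mathcal H}(y,z)|$ that gains, with respect to \eqref{6.4}, a factor which is small when $u$ is small compared with $z^2$. Feeding this into the subordination formula \eqref{eq:PoissLag}, differentiating in $t$ by means of \eqref{3.3} exactly as in the proof of \eqref{6.55}, and splitting the $u$-integral according to whether $u$ is below or above a multiple of $z^2$ and to the size of $t+|y-z|$ relative to $z$, I expect to arrive at
    \begin{equation*}
        \Bigl\|t^\beta\partial_t^\beta\bigl[P_t^{\mathcal L_\alpha}(y,z)-P_t^{\mathcal H}(y,z)\bigr]\chi_{\Gamma_+(x)}(y,t)\Bigr\|_{H_+}
            \leq \frac{C}{z}\Bigl(1+\log_+\frac{z}{|x-z|}\Bigr),\qquad \frac x2<z<2x,
    \end{equation*}
    the logarithm coming, as in Lemma~\ref{Lem5.3}, from the borderline range of the $u$-integral. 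Since the right-hand side is integrable in $z$ against $\|f(z)\|_\B$ over $(x/2,2x)$, this already gives $K_{loc}(f)(x;\cdot,\cdot)\in L^2\bigl((0,\infty)^2,\tfrac{dydt}{t^2};\B\bigr)$ for every $x\in(0,\infty)$ and an $L^\infty$-estimate in $f$.

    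With this $H_+$-bound in hand the remaining steps are routine and follow the proof of Lemma~\ref{Lem5.4}. Because the kernel $t^\beta\partial_t^\beta[P_t^{\mathcal L_\alpha}(y,z)-P_t^{\mathcal H}(y,z)]$ is scalar and $f$ is a finite sum, the Minkowski argument of Lemma~\ref{Lem3.1} --- passing the integral $\int\|f(z)\|_\B\,dz$ outside the Gaussian sum that defines the $\gamma$-norm and invoking Parseval's identity in $H_+$ --- together with the measurability argument given there, shows that $x\mapsto K_{loc}(f)(x;\cdot,\cdot)$ is strongly measurable with values in $\gamma(H_+,\B)$ and that
    \begin{equation*}
        \bigl\|K_{loc}(f)(x;\cdot,\cdot)\bigr\|_{\gamma(H_+,\B)}
            \leq C\int_{x/2}^{2x}\frac1z\Bigl(1+\log_+\frac z{|x-z|}\Bigr)\|f(z)\|_\B\,dz,\qquad x\in(0,\infty).
    \end{equation*}
    For each $x$ the measure $\frac1z\bigl(1+\log_+\tfrac z{|x-z|}\bigr)\chi_{(x/2,2x)}(z)\,dz$ is a fixed constant $C_0$ times a probability measure on $(0,\infty)$, so Jensen's inequality followed by Hardy's inequalities (\cite[p.~20]{Zyg}) bound the $L^p(0,\infty)$-norm of the right-hand side by $C\bigl\|\,\|f(\cdot)\|_\B\,\bigr\|_{L^p(0,\infty)}=C\|f\|_{L^p((0,\infty),\B)}$, which is the assertion.

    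The hard part will be the first step: the precise heat-kernel comparison through the behaviour of $I_\alpha$ and, above all, the careful bookkeeping in the subordination integral that is needed to make the resulting $H_+$-norm come out exactly of Hardy type with the logarithmic correction of Lemma~\ref{Lem5.3} --- here with the additional harmonic term $x^2$ to be absorbed along the way. Everything after that merely transcribes arguments already carried out for the Bessel and Schr\"odinger local parts.
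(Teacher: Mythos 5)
Your plan is the paper's plan: compare $P_t^{\mathcal L_\alpha}$ with the one--dimensional Hermite--Poisson kernel through the subordination formula, split the $u$--integral according to the size of $\xi=\tfrac{2yze^{-2u}}{1-e^{-4u}}$, use \eqref{6.2} when $\xi$ is small and \eqref{6.3} (the exponential factors of the two Mehler kernels do coincide, so $W_u^{\mathcal L_\alpha}=W_u^{\mathcal H}(1+O(1/\xi))$ for $\xi\geq 1$) when $\xi$ is large, obtain an $H_+$--bound for the difference kernel on $x/2<z<2x$ that is integrable against $dz/z$, and finish by normalizing that bound into a probability measure, applying Jensen and integrating in $x$. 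So the architecture is sound and coincides with the paper's proof.

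The weak point is that your central estimate is only announced (``I expect to arrive at\dots''), and the specific form you announce --- $\tfrac Cz\bigl(1+\log_+\tfrac z{|x-z|}\bigr)$, copied from the Bessel Lemma~\ref{Lem5.3} --- is not what this computation naturally produces, and hides the one genuinely delicate point. On $\{\xi\geq1\}$ you cannot exploit the full $O(1/\xi)$ gain: there $1/\xi\sim e^{2u}/(2yz)$ grows exponentially in $u$, and the resulting factor $(yz)^{-1}$ would make the aperture integral $\int_{\{|x-y|<t,\ y>0\}}y^{-2}\,dy$ divergent near $y=0$ when $t\gtrsim x$. The paper therefore uses only a fractional power, $\xi^{-1}\leq\xi^{-1/3}$ (any exponent $<1/2$ works), so that $e^{-u}e^{2u/3}$ still decays and $(yz)^{-2/3}$ is integrable on the cone; this yields $|I_2|\leq C\,t^\beta(yz)^{-1/3}(t+|y-z|)^{-\beta-1/3}$ and, after the cone integration with an auxiliary $0<\varepsilon<\min\{2/3,2\beta\}$, the bound $\tfrac Cx\bigl(1+(x/|x-z|)^{(3\varepsilon+2)/6}\bigr)$ --- an algebraic, not logarithmic, singularity. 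Since any bound $\tfrac Cx\Phi(x/|x-z|)$ with $\int_{1/2}^2\Phi(1/|1-u|)\tfrac{du}u<\infty$ feeds into your Jensen/Fubini step, this discrepancy does not derail the method, but as written your key inequality is unproved and would have to be replaced by the algebraic one; the bookkeeping you defer is exactly where the $\theta<1/2$ restriction appears. Two small further corrections: the comparison should be with the Hermite semigroup applied to the zero--extension $f_0$ (as in Section~\ref{sec:Laguerre}), not with classical estimates \eqref{4.3}; and Hardy's inequalities play no role for the purely local kernel --- after Jensen one concludes with Fubini and the uniform bound $\int_{z/2}^{2z}\tfrac1x\bigl(1+(x/|x-z|)^{(3\varepsilon+2)/6}\bigr)dx\leq C$; Hardy is what handles the global pieces in Lemma~\ref{Lem6.1}.
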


\begin{proof}
      Let $f\in  C_c^\infty(0,\infty) \otimes \B$. To simplify notation, we call
    $$\xi
        =\xi(u,y,z)=\frac{2yze^{-2u}}{1-e^{-4u}}, \quad u,y,z \in (0,\infty).$$
    We  make the following decomposition
    \begin{align*}
        & t^\beta \partial _t^\beta [P_t^{\mathcal{L}_\alpha }(y,z)- P_t^\mathcal{H}(y,z)]\\
        & \qquad = \frac{t^\beta }{2\sqrt{\pi }}\left(\int_{\{u\in (0,\infty ) \, : \, \xi \leq 1\}}+\int_{\{u\in (0,\infty ) \, : \, \xi \geq 1\}}\right)
            \frac{\partial _t^\beta [te^{-t^2/4u}]}{u^{3/2}}[W_u^{\mathcal{L}_\alpha }(y,z)-W_u^\mathcal{H}(y,z)]du\\
        & \qquad = I_1(y,z,t)+I_2(y,z,t),\quad t,y,z\in (0,\infty ).
    \end{align*}

    According to (\ref{3.3}) and (\ref{6.2}) we get
    \begin{align*}
        &|I_1(y,z,t)| \\
        & \qquad \leq  Ct^\beta\int_{\{u\in (0,\infty ) \, : \, \xi \leq 1\}} \frac{e^{-t^2/8u}}{u^{(\beta +2)/2}}
                \left(\frac{e^{-2u}}{1-e^{-4u}}\right)^{1/2}\exp\left(-\frac{1}{2}(y^2+z^2)\frac{1+e^{-4u}}{1-e^{-4u}}\right)
                \left[\xi^{\alpha +1/2} + e^\xi \right]du\\
        & \qquad \leq  Ct^\beta \int_0^\infty \frac{e^{-c(t^2+y^2+z^2)/u}}{u^{(\beta +3)/2}}du
            \leq  C\frac{t^\beta }{(t+y+z)^{\beta +1}},\quad t,z,y\in (0,\infty ).
    \end{align*}

    By proceeding as in \eqref{32.1} we obtain
    \begin{equation}\label{6.7}
        \|I_1(y,z,t)\chi _{\Gamma _+(x)}(y,t)\|_{H_+}
            \leq \frac{C}{z+x}\leq \frac{C}{x},\quad 0< x/2 <z<2x.
    \end{equation}

    On the other hand, (\ref{3.3}) and (\ref{6.3}) lead to
    \begin{align*}
        |I_2(y,z,t)|
            &\leq Ct^\beta \int_{\{u\in (0,\infty ) \, : \, \xi \geq 1\}}
                \frac{e^{-t^2/8u}}{u^{(\beta +2)/2}} W_u^\mathcal{H}(y,z) \frac{du}{\xi} \\
            & \leq Ct^\beta\int_{\{u\in (0,\infty ) \, : \, \xi \geq 1\}}
                \frac{e^{-t^2/8u}}{u^{(\beta +2)/2}} \frac{e^{-u}e^{-c|y-z|^2/u}}{u^{1/2}} \frac{du}{\xi^{1/3}} \\
            &\leq C\frac{t^\beta}{(yz)^{1/3}}\int_0^\infty
                \frac{e^{-c(t^2+|y-z|^2)/u}}{u^{(3\beta +7)/6}}du\leq C\frac{t^\beta }{(yz)^{1/3}(t+|y-z|)^{\beta +1/3}},\quad t,z,y\in (0,\infty ).
    \end{align*}
    Then, by choosing $0<\varepsilon <\min \{2/3, 2\beta\}$ we can write
    \begin{align}\label{6.8}
        &\|I_2(y,z,t)\chi _{\Gamma _+(x)}(y,t)\|_{H_+}
            \leq C\left(\int_{\Gamma _+(x)}\frac{t^{2\beta -2}}{(yz)^{2/3}(t+|y-z|)^{2\beta +2/3}}dtdy\right)^{1/2}\nonumber\\
        & \qquad \leq C\left( \left\{\int_0^{z/4} + \int_{z/4}^\infty \right\}\int_{|x-y|}^\infty \frac{t^{2\beta -2}}{(yz)^{2/3}(t+|y-z|)^{2\beta +2/3}}dtdy \right)^{1/2}\nonumber\\
        & \qquad \leq C\left(\int_0^{z/4}\int_{x/2}^\infty \frac{dtdy}{(yz)^{2/3}t^{8/3}}+\int_{z/4}^\infty \frac{1}{(yz)^{2/3}|x-z|^{\varepsilon + 2/3}}\int_{|x-y|}^\infty \frac{dt}{t^{2-\varepsilon}}dy\right)^{1/2}\nonumber\\
        & \qquad \leq C\left(\int_0^{z/4} \frac{dy}{(yz)^{2/3}x^{5/3}}+\frac{1}{z^{2/3}|x-z|^{\varepsilon + 2/3}}\int_{x/8}^\infty \frac{dy}{y^{2/3}|x-y|^{1-\varepsilon}}\right)^{1/2}\nonumber\\
        & \qquad \leq C\left(\frac{1}{z^{1/3}x^{5/3}}+\frac{1}{z^{2/3}|x-z|^{\varepsilon + 2/3}x^{-\varepsilon + 2/3}}\int_{1/8}^\infty \frac{du}{u^{2/3}|1-u|^{1-\varepsilon}}\right)^{1/2}\nonumber\\
        & \qquad \leq C\left(\frac{1}{x^2}+\frac{x^{\varepsilon + 2/3}}{x^2|x-z|^{\varepsilon + 2/3}}\right)^{1/2}
            \leq\frac{C}{x}\left(1+\left(\frac{x}{|x-z|}\right)^{(3 \varepsilon + 2)/6}\right), \quad 0< x/2 <z<2x.
    \end{align}
    By combining (\ref{6.7}) and (\ref{6.8}) we get
    $$\Big\|t^\beta \partial _t^\beta [P_t^{\mathcal{L}_\alpha }(y,z)- P_t^\mathcal{H}(y,z)]\chi_{\Gamma _+(x)}(y,t)\Big\|_{H_+}
        \leq \frac{C}{x}\left(1+\left(\frac{x}{|x-z|}\right)^{(3 \varepsilon + 2)/6}\right), \quad 0< x/2 <z<2x.$$
    For every $x\in (0,\infty )$,
    $$d_{\mu _x}(z)
        =\frac{1}{C_0} \left(1+\Big(\frac{x}{|x-z|}\Big)^{(3 \varepsilon + 2)/6}\right)\chi _{(x/2,2x)}(z) \frac{dz}{z}$$
    is a probability measure on $(0,\infty )$
    when
    $$C_0=\int_{1/2}^2\left(1+\frac{1}{|1-u|^{(3 \varepsilon + 2)/6}}\right)du.$$
    Then, since $\gamma (H_+,\mathbb{C})=H_+$, Jensen's inequality leads to
    \begin{align*}
        \|K_{\rm loc}(f)(x,\cdot,\cdot)\|^p_{\gamma (H_+,\B)}
            &\leq C \left(\int_{x/2}^{2x}\|f(z)\|_\B
                \Big\|t^\beta \partial _t^\beta [P_t^{\mathcal{L}_\alpha }(y,z)- P_t^\mathcal{H}(y,z)]\chi _{\Gamma_+(x)}(y,t)\Big\|_{H_+}dz\right)^p\\
        &  \leq C\left(\frac{1}{x}\int_{x/2}^{2x}\|f(z)\|_\B\left(1+\Big(\frac{x}{|x-z|}\Big)^{(3 \varepsilon + 2)/6}\right)dz\right)^p\\
        &  \leq C\int_{x/2}^{2x}\|f(z)\|_\B^p\frac{1}{x}\left(1+\Big(\frac{x}{|x-z|}\Big)^{(3 \varepsilon + 2)/6}\right)dz,\quad x\in (0,\infty ).
    \end{align*}
    Hence,
    \begin{align*}\label{6.9}
        & \|K_{\rm loc}(f)\|_{L^p((0,\infty ),\gamma (H_+,\B))}
            \leq C\left(\int_0^\infty\int_{x/2}^{2x}\|f(z)\|^p_\B\frac{1}{x}\left(1+\Big(\frac{x}{|x-z|}\Big)^{(3 \varepsilon + 2)/6}\right)dzdx\right)^{1/p}\nonumber\\
        &  \qquad \leq C\left(\int_0^\infty\|f(z)\|^p_\B\int_{z/2}^{2z}\frac{1}{x}\left(1+\Big(\frac{x}{|x-z|}\Big)^{(3 \varepsilon + 2)/6}\right)dxdz\right)^{1/p}
            \leq C\|f(z)\|_{L^p((0,\infty ),\B)}.
    \end{align*}
\end{proof}

Putting together Lemmas~\ref{Lem6.1} and \ref{Lem6.2} we conclude that
$$\|t^\beta \partial _t^\beta P_t^{\mathcal{L}_\alpha }(f)-t^\beta \partial _t^\beta P_t^\mathcal{H}(f_0) \|_{T_p^2((0,\infty ),\B)}
    \leq \|f\|_{L^p((0,\infty ),\B)}, \quad f \in C_c^\infty(0,\infty) \otimes \B.$$
Moreover, according to Theorem~\ref{Theorem 1.2}, $(ii)$, since $\B$ is a UMD Banach space, we have that
\begin{align*}
    \|t^\beta \partial _t^\beta P_t^\mathcal{H}(f_0) \|_{T_p^2((0,\infty ),\B)}
        & \leq \|t^\beta \partial _t^\beta P_t^\mathcal{H}(f_0)\|_{T_p^2(\R,\B)} \\
        & \leq C\|f\|_{L^p((0,\infty ), \B)},\quad f \in C_c^\infty(0,\infty) \otimes \B.
\end{align*}
Hence,
$$\|t^\beta \partial _t^\beta P_t^{\mathcal{L}_\alpha }(f)\|_{T_p^2((0,\infty ),\B)}
    \leq \|f\|_{L^p((0,\infty ),\B)}, \quad f \in C_c^\infty(0,\infty) \otimes \B.$$
By proceeding as in (\ref{6.55}) we get
$$|t^\beta \partial _t^\beta P_t^{\mathcal{L}_\alpha }(y,z)|
    \leq C\frac{t^\beta }{(t+|y-z|)^{\beta +1}},\quad t,y,z\in (0,\infty ).$$
Finally,  as in the proof of Lemma~\ref{Lem2.1}, we can deduce that
$$\|t^\beta \partial _t^\beta P_t^{\mathcal{L}_\alpha }(f) \|_{T_p^2((0,\infty ),\B)}
    \leq \|f\|_{L^p((0,\infty ),\B)},\quad f\in L^p((0,\infty ),\B).$$

\subsection{}
In this paragraph we establish that
$$\|f\|_{L^p((0,\infty ), \B)}
    \leq C\|t^\beta \partial _t^\beta P_t^{\mathcal{L}_\alpha }(f)\|_{T_p^2((0,\infty ), \B)}, \quad f \in L^p((0,\infty),\B).$$
As in Section~\ref{subsec:Bess2}, by duality and density arguments, it is enough to have the following identity.

\begin{lema}\label{Lem6.3}
    Let $\alpha,\beta >0$ and $f,g\in C_c^\infty (0,\infty )$. Then,
    \begin{equation*}\label{6.10}
        \int_0^\infty \int_{\Gamma _+(x)}t^\beta \partial _t^\beta P_t^{\mathcal{L}_\alpha }(f)(y)t^\beta \partial _t^\beta P_t^{\mathcal{L}_\alpha }(g)(y)\frac{dydt}{t|J_t(y)|}dx
            =\frac{e^{2\pi i\beta} \Gamma (2\beta)}{2^{2\beta}}\int_0^\infty f(x)g(x)dx,
    \end{equation*}
    where $J_t(y)=\{x\in (0,\infty ): |x-y|<t\}$.
\end{lema}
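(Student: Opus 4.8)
The plan is to establish this identity as the Laguerre analogue of Lemma~\ref{Lem2.3} and of identity~\eqref{5.15}, using the spectral decomposition of $\mathcal{L}_\alpha$ in the orthonormal basis $\{\varphi_k^\alpha\}_{k\in\mathbb N}$ of $L^2(0,\infty)$. First I would integrate the variable $x$ out. For fixed $(y,t)\in(0,\infty)^2$ the quantity $t^\beta\partial_t^\beta P_t^{\mathcal{L}_\alpha}(f)(y)\,t^\beta\partial_t^\beta P_t^{\mathcal{L}_\alpha}(g)(y)$ does not depend on $x$, while $\int_0^\infty\chi_{\Gamma_+(x)}(y,t)\,dx=|J_t(y)|$; hence, once Tonelli's theorem is justified, the left-hand side equals
$$\int_0^\infty\int_0^\infty t^\beta\partial_t^\beta P_t^{\mathcal{L}_\alpha}(f)(y)\,t^\beta\partial_t^\beta P_t^{\mathcal{L}_\alpha}(g)(y)\,\frac{dydt}{t}.$$

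Next I would exploit that, by the subordination formula~\eqref{eq:PoissLag} together with the identity $e^{-t\sqrt\lambda}=\frac{t}{2\sqrt\pi}\int_0^\infty u^{-3/2}e^{-t^2/4u}e^{-u\lambda}\,du$, one has $P_t^{\mathcal{L}_\alpha}=e^{-t\sqrt{\mathcal{L}_\alpha}}$, so that $P_t^{\mathcal{L}_\alpha}(\varphi_k^\alpha)=e^{-ta_k}\varphi_k^\alpha$ with $a_k=\sqrt{2(2k+\alpha+1)}>0$. Moreover, a direct computation from the Segovia--Wheeden definition shows that $\partial_t^\beta e^{-ta}=e^{i\pi\beta}a^\beta e^{-ta}$ for every $a>0$: taking $F(t)=e^{-ta}$ one has $\partial_t^mF(t+s)=(-a)^me^{-(t+s)a}$, and the remaining $s$-integral is a Gamma function. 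Since $f\in C_c^\infty(0,\infty)$, the coefficients $c_k^\alpha(f)$ decay faster than any power of $k$ (because $\mathcal{L}_\alpha^N f\in C_c^\infty(0,\infty)$ for all $N$), so $t^\beta\partial_t^\beta$ may be applied term by term to the expansion of $P_t^{\mathcal{L}_\alpha}(f)$, giving
$$t^\beta\partial_t^\beta P_t^{\mathcal{L}_\alpha}(f)(y)=e^{i\pi\beta}\sum_{k=0}^\infty (ta_k)^\beta e^{-ta_k}c_k^\alpha(f)\varphi_k^\alpha(y),\qquad t,y\in(0,\infty),$$
and similarly for $g$.

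Inserting these two expansions into the double integral above, using the orthonormality of $\{\varphi_k^\alpha\}_{k\in\mathbb N}$ in $L^2(0,\infty)$ to carry out the $y$-integration, and then the elementary identity $\int_0^\infty (ta_k)^{2\beta}e^{-2ta_k}\,\frac{dt}{t}=a_k^{2\beta}(2a_k)^{-2\beta}\Gamma(2\beta)=\Gamma(2\beta)2^{-2\beta}$ for the $t$-integration, I obtain
$$\frac{e^{2\pi i\beta}\Gamma(2\beta)}{2^{2\beta}}\sum_{k=0}^\infty c_k^\alpha(f)c_k^\alpha(g)=\frac{e^{2\pi i\beta}\Gamma(2\beta)}{2^{2\beta}}\int_0^\infty f(x)g(x)\,dx,$$
where the last step is Parseval's identity for the (real) orthonormal basis $\{\varphi_k^\alpha\}$. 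Combined with the first reduction, this is exactly the asserted formula.

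The only delicate point is justifying the interchanges used along the way: Tonelli in $x$, and Fubini for the summations against the $t$- and $y$-integrals. All of them reduce to absolute convergence, which in turn follows from the $L^2(0,\infty)$-boundedness of the Laguerre Littlewood--Paley function $g_\beta^{\mathcal{L}_\alpha}(f)(y)=\big(\int_0^\infty|t^\beta\partial_t^\beta P_t^{\mathcal{L}_\alpha}(f)(y)|^2\,dt/t\big)^{1/2}$; indeed, repeating the computation above with complex conjugates yields $\|g_\beta^{\mathcal{L}_\alpha}(f)\|_{L^2(0,\infty)}^2=\Gamma(2\beta)2^{-2\beta}\|f\|_{L^2(0,\infty)}^2$, and then a Cauchy--Schwarz estimate together with $|J_t(y)|\le 2t$ controls the triple integral on the left-hand side. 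I expect this convergence bookkeeping to be the main (though routine) obstacle; the algebraic core of the argument is the spectral identity for $t^\beta\partial_t^\beta P_t^{\mathcal{L}_\alpha}$.
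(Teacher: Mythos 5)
Your proposal is correct and follows essentially the same route as the paper: expand $P_t^{\mathcal{L}_\alpha}$ in the orthonormal basis $\{\varphi_k^\alpha\}$, differentiate term by term using the rapid decay of $c_k^\alpha(f)$ and the identity $\partial_t^\beta e^{-ta}=e^{i\pi\beta}a^\beta e^{-ta}$, integrate out $x$, and conclude by orthonormality, the scale-invariant Gamma integral and Parseval. The only (immaterial) difference is your normalization $a_k=\sqrt{2(2k+\alpha+1)}$ of the Poisson eigenvalues, which does not affect the result since $\int_0^\infty (ta)^{2\beta}e^{-2ta}\,dt/t=\Gamma(2\beta)2^{-2\beta}$ for every $a>0$.
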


\begin{proof}
    We can write
    \begin{equation}\label{6.11}
        P_t^{\mathcal{L}_\alpha }(f)(x)
            =\sum_{k=0}^\infty e^{-t\sqrt{2k+\alpha +1}}c_k^\alpha (f)\varphi _k^\alpha (x),\quad  t,x\in (0,\infty ).
    \end{equation}
    According to  \cite[p. 1124]{MW} there exists $C>0$ such that $|\varphi _k^\alpha (x)|\leq C$, $x\in (0,\infty )$. Moreover, for every $m\in \mathbb{N}$ there exists $C>0$ such that $|c_k^\alpha (f)|\leq C(1+k)^{-m}$, $k\in \mathbb{N}$. Hence, the series in (\ref{6.11}) converges in $L^2(0,\infty )$ and uniformly in $(t,x)\in (0,\infty )\times (0,\infty )$. Also, for every $m\in \mathbb{N}$, we get
    $$\partial _t^mP_t^{\mathcal{L}_\alpha }(f)(x)
        =\sum_{k=0}^\infty (-1)^m(2k+\alpha +1)^{m/2}e^{-t\sqrt{2k+\alpha +1}}c_k^\alpha (f)\varphi _k^\alpha (x),\quad x,t\in (0,\infty ).$$
    If $m\in \mathbb{N}$ is such that $m-1\leq \beta <m$, then
    \begin{align*}
        \partial _t^\beta P_t^{\mathcal{L}_\alpha }(f)(x)
            &=\frac{e^{-i\pi (m-\beta)}}{\Gamma (m-\beta)}\int_0^\infty \partial _t^mP_{t+s}^{\mathcal{L}_\alpha }(f)(x)s^ {m-\beta -1}ds\\
            &=\frac{e^{-i\pi (m-\beta)}}{\Gamma (m-\beta)}\int_0^\infty s^ {m-\beta -1}\sum_{k=0}^\infty (-1)^m(2k+\alpha +1)^{m/2}e^{-(t+s)\sqrt{2k+\alpha +1}}c_k^\alpha (f)\varphi _k^\alpha (x)ds\\
            &=\frac{e^{-i\pi (m-\beta)}}{\Gamma (m-\beta)}\sum_{k=0}^\infty (-1)^m(2k+\alpha +1)^{m/2}e^{-t\sqrt{2k+\alpha +1}}c_k^\alpha (f)\varphi _k^\alpha (x)\int_0^\infty s^ {m-\beta -1}
                e^{-s\sqrt{2k+\alpha +1}}ds\\
            &=e^{i\pi \beta}\sum_{k=0}^\infty (2k+\alpha +1)^{\beta/2}e^{-t\sqrt{2k+\alpha +1}}c_k^\alpha (f)\varphi _k^\alpha (x),\quad t,x\in (0,\infty ).
    \end{align*}
    The interchange between the serie and the integral is justified because
    $(c_k^\alpha(f))_{k\in \mathbb{N}}$ is rapidly decreasing as $k\rightarrow \infty$.
    We have that
    \begin{align*}
        & \int_0^\infty \int_{\Gamma _+(x)}t^\beta \partial _t^\beta P_t^{\mathcal{L}_\alpha }(f)(y)t^\beta \partial _t^\beta P_t^{\mathcal{L}_\alpha }(g)(y)\frac{dydt}{t|J_t(y)|}dx&&\\
        & \qquad = \int_0^\infty \int_0^\infty t^\beta \partial _t^\beta P_t^{\mathcal{L}_\alpha }(f)(y)t^\beta \partial _t^\beta P_t^{\mathcal{L}_\alpha }(g)(y)\int_{J_t(y)}dx\frac{dydt}{t|J_t(y)|}\\
        & \qquad = \int_0^\infty \int_0^\infty t^\beta \partial _t^\beta P_t^{\mathcal{L}_\alpha }(f)(y)t^\beta \partial _t^\beta P_t^{\mathcal{L}_\alpha }(g)(y)\frac{dydt}{t}\\
        & \qquad = e^{2i\pi \beta}\int_0^\infty \int_0^\infty \left[\sum_{k=0}^\infty (t\sqrt{2k+\alpha +1})^\beta e^{-t\sqrt{2k+\alpha +1}}c_k^\alpha (f)\varphi _k^\alpha (y)\right]\\
        & \qquad \qquad \times\left[\sum_{m=0}^\infty (t\sqrt{2m+\alpha +1})^\beta e^{-t\sqrt{2m+\alpha +1}}c_m^\alpha (g)\varphi _m^\alpha (y)\right]\frac{dydt}{t}\\
        & \qquad = e^{2i\pi \beta}\sum_{k=0}^\infty c_k^\alpha (f)c_k^\alpha (g)\int_0^\infty (t\sqrt{2k+\alpha +1})^{2\beta} e^{-2t\sqrt{2k+\alpha +1}}\frac{dt}{t}\\
        & \qquad = e^{2i\pi \beta}\frac{\Gamma (2\beta)}{2^{2\beta}}\sum_{k=0}^\infty c_k^\alpha (f)c_k^\alpha (g)=e^{2i\pi \beta}\frac{\Gamma (2\beta)}{2^{2\beta}}\int_0^\infty f(x)g(x)dx.
    \end{align*}
\end{proof}


\def\cprime{$'$}

\end{document}